\documentclass{amsart}

\usepackage{amsthm,amsfonts,amsmath,amssymb}
\usepackage[all]{xy}
\usepackage{tensor}
\usepackage[utf8]{inputenc}
\usepackage{graphicx}
\usepackage{color}
\DeclareGraphicsRule{.pdftex}{pdf}{.pdftex}{}

\newcommand{\dtau}{\dot{\tau}}
\newcommand{\dt}{\dot{t}}
\newcommand{\de}{\dot{e}}

\newcommand{\at}{\vec{\alpha}}

\newcommand{\tS}{\widetilde{\Sigma}}
\newcommand{\ttS}{\widetilde{\Sigma}'}
\newcommand{\ti}{\widetilde{i}}
\newcommand{\tpr}{\widetilde{pr}}
\newcommand{\ssp}{\mathfrak{s}}
\newcommand{\gl}{\mathfrak{gl}}

\newcommand{\N}{\mathbb{N}}
\newcommand{\Z}{\mathbb{Z}}
\newcommand{\R}{\mathbb{R}}
\newcommand{\C}{\mathbb{C}}
\newcommand{\Cl}{\mathcal{C}}
\newcommand{\cyl}{Cyl}
\newcommand{\TC}{\underline{\C}}
\newcommand{\TR}{\underline{\R}}
\newcommand{\cpo}{\C P^1}
\newcommand{\rpo}{\R P^1}
\newcommand{\E}{\mathcal{E}}
\newcommand{\T}{\mathcal{T}}
\newcommand{\PS}{\mathcal{S}}
\newcommand{\GP}{\mathcal{G}}

\newcommand{\jo}{\mathcal{J}_{\Omega}^l(X)}
\newcommand{\rjo}{\R\jo}
\newcommand{\js}{\mathcal{J}_{S}^l}

\newcommand{\Tw}{\mathfrak{T}}

\newcommand{\M}{\mathcal{M}}
\newcommand{\mok}{\overline{\M}_{0,n}}
\newcommand{\rmok}{\R\overline{\M}_{2s,l}}
\newcommand{\rmook}{\R\overline{\M}_{2s,0}}
\newcommand{\rmk}{\R\M_{2s,0}}
\newcommand{\Uk}{\overline{\mathcal{U}}_{0,n}}
\newcommand{\TT}{\mathcal{T}}
\newcommand{\mdo}{\M^d(X,\Omega)}
\newcommand{\meo}{\M^e(X,\Omega)}
\newcommand{\mepo}{\M^{e'}(X,\Omega)}
\newcommand{\meeo}{\M^{e,e'}(X,\Omega)}
\newcommand{\rmeeo}{\R\meeo}
\newcommand{\rmddo}{\R\M^{d_1,d_2}(X,\Omega)}
\newcommand{\hmdo}{\widehat{\M}^d(X,\Omega)}
\newcommand{\rmdo}{\R\mdo}

\newcommand{\rmdod}{\R\M_{\delta}^d(X,\Omega)}

\newcommand{\crmdod}{\R\overline{\M}_{\delta}^d(X,\Omega)}
\newcommand{\hmdp}{\widehat{\M}_{\delta,\P}^d(X,\Omega)}
\newcommand{\rhmdp}{\R\hmdp}
\newcommand{\rchmdp}{\R_{c_S}\hmdp}
\newcommand{\rmdp}{\R\M_{\delta,\P}^d(X,\Omega)}
\newcommand{\crmdp}{\R\overline{\M}_{\delta,\P}^d(X,\Omega)}

\newcommand{\rhmdo}{\R\hmdo}
\newcommand{\rhmdoe}{\R\widehat{\M}^d_{\emptyset}(X,\Omega)}
\newcommand{\rhmdon}{\R\widehat{\M}^d_{S^1}(X,\Omega)}

\newcommand{\CS}{\mathcal{S}}
\newcommand{\PP}{\mathcal{P}}
\newcommand{\pdo}{\PP^d(X,\Omega)}

\newcommand{\pert}{\PP ert}
\renewcommand{\P}{\mathcal{P}}
\newcommand{\OO}{\mathcal{O}}

\newcommand{\mobs}{Mob(S,j_0)}
\newcommand{\mobsp}{Mob^+(S,j_0)}
\newcommand{\rmobsp}{\R Mob^+(S,j_0,c_S)}

\newcommand{\aut}{Aut}
\newcommand{\raut}{\R Aut(N,c_N)}

\newcommand{\rautt}{\R Aut(\TC^2,c_{\TC})}

\newcommand{\DB}{\overline{\partial}}
\newcommand{\uDB}{\underline{\overline{\partial}}}
\newcommand{\OB}{\overline{D}}
\newcommand{\rop}{\R Op(N)}
\DeclareMathOperator{\ddet}{Det}
\newcommand{\Det}{\ddet(N,c_N)}

\newcommand{\Dett}{\ddet(\TC^2,c_{\TC})}
\newcommand{\tD}{\widetilde{D}}
\newcommand{\bD}{\widehat{D}}
\newcommand{\Dn}{D^{\uh}}
\newcommand{\vD}{D^{\uh,\P}}
\newcommand{\uvD}{\underline{D}^{\uh,\P}}
\newcommand{\pD}{D^{\P}}
\newcommand{\kD}{\underline{D}^{\uk}}
\newcommand{\kDP}{\underline{D}^{\uk,P_0}}
\newcommand{\zD}{\underline{D}}
\newcommand{\zDP}{\underline{D}^{P_0}}
\newcommand{\kDPp}{\underline{D}^{\uk,P'_0}}

\newcommand{\kDa}{\underline{D}_{\at}^{\uk}}
\newcommand{\kDPa}{\underline{D}_{\at}^{\uk,P_{\at}}}
\newcommand{\zDa}{\underline{D}_{\at}}
\newcommand{\zDPa}{\underline{D}_{\at}^{P_{\at}}}
\newcommand{\kDPpa}{\underline{D}_{\at}^{\uk,P'_{\at}}}

\newcommand{\F}{\mathcal{F}}

\newcommand{\la}{l_{\at}}
\newcommand{\fa}{f_{\at}}
\newcommand{\lPKa}{l^{P_0,\uk}_{\at}}
\newcommand{\fPKa}{f^{P_0,\uk}_{\at}}
\newcommand{\lPpKa}{l^{P'_0,\uk}_{\at}}
\newcommand{\fPpKa}{f^{P'_0,\uk}_{\at}}
\newcommand{\lPHa}{l^{P,\uh}_{\at}}
\newcommand{\fPHa}{f^{P,\uh}_{\at}}

\newcommand{\ue}{\underline{e}}
\newcommand{\uh}{\underline{H}}
\newcommand{\up}{\underline{p}}
\newcommand{\uz}{\underline{z}}
\newcommand{\uk}{\underline{K}}
\newcommand{\uksi}{\underline{\xi}}
\newcommand{\uzeta}{\underline{\zeta}}
\newcommand{\ux}{\underline{x}}

\newcommand{\un}{\underline{n}}
\newcommand{\una}{\underline{n}_{\at}}
\newcommand{\uv}{\underline{v}}
\newcommand{\uw}{\underline{w}}

\newcommand{\fp}[2]{\tensor[_{#1}]{\times}{_{#2}}}
\newcommand{\fo}[2]{\tensor[_{#1}]{\otimes}{_{#2}}}

\newtheorem{prop}{Proposition}
\newtheorem{theo}{Theorem}
\newtheorem*{theon}{Theorem}
\newtheorem{lemma}{Lemma}
\newtheorem{definition}{Definition}
\newtheorem{remark}{Remark}

\DeclareMathOperator{\coker}{coker}
\DeclareMathOperator{\ind}{ind}
\DeclareMathOperator{\im}{im}
\DeclareMathOperator{\id}{id}
\DeclareMathOperator{\e}{e}
\DeclareMathOperator{\ex}{exp}
\DeclareMathOperator{\pt}{PT}
\DeclareMathOperator{\dd}{d}

\begin{document}

\title[Counting real rational curves on real symplectic $K3$ surfaces]{Counting real rational curves on real symplectic $4$-manifolds with vanishing first Chern class}
\author[R. Crétois]{Rémi Crétois}
\address{Matematiska institutionen\\
Uppsala universitet\\
Box 480\\
751 06 Uppsala, Sweden}
\email{remi.cretois@math.uu.se}
\date{\today}
\keywords{Real rational pseudo-holomorphic curves, Cauchy-Riemann operators, moduli spaces, real symplectic surfaces, $K3$ surfaces, Gromov-Witten invariants, Welschinger invariants}
\subjclass[2000]{Primary: 14N99; Secondary: 53D45, 14J28}
\thanks{Research supported in part by the ERC project TROPGEO}

\begin{abstract}
  We define a signed count of real rational pseudo-holomorphic curves appearing in a one-parameter family of real $Spin$ symplectic $K3$ surfaces. We show that this count is an invariant of the deformation class of the family. In the case of a real projective $K3$ surface, this invariant specializes to count real rational curves appearing in a linear system on the surface.
\end{abstract}

\maketitle

\tableofcontents

\section{Introduction}

Let $(X,\omega,c_X)$ be a real symplectic manifold of dimension $4$, i.e. $(X,\omega)$ is symplectic and $c_X$ is an anti-symplectic involution on $X$. Take a homology class $d\in H_2(X,\Z)$ such that $(c_X)_*d=-d$. Weschinger in \cite{wel1} defined enumerative invariants by counting real rational pseudo-holomorphic curves on $X$ appearing in the class $d$ and passing through a real configuration of points. However, his construction yields trivial invariants when the first Chern class of $(X,\omega)$ vanishes. Nonetheless, the algebraic counterpart of this problem that is counting real rational curves appearing in a linear system on a real $K3$ surface was studied by Kharlamov and R\u{a}sdeaconu in \cite{kharlr}, where the authors defined and computed a count invariant under deformation of the polarized surface when the linear system is primitive.

In the present article, we define what we believe is a natural generalization of both Welschinger's and Kharlamov-R\u{a}sdeaconu's work by studying the case of real symplectic $4$-manifolds with vanishing first Chern class.

To this end and as suggested by Kontsevich (\cite{konts}, \S 5.4), we consider a loop $\Omega =(\omega_t)_{t\in S^1}$ of symplectic structures on $X$ with vanishing first Chern class and such that $c_X$ is anti-symplectic for each structure. We let $\rjo$ be the space of almost-complex structures $J$ on $X$ which are tamed by an element of $\Omega$ and such that $d c_X\circ J = - J\circ d c_X$. The moduli space $\rmdo$ of real rational curves in the class $d$ which are pseudo-holomorphic for an element of $\rjo$ is a Banach manifold and comes with a Fredholm map $\pi : \rmdo\rightarrow \rjo$. This map is of index $-1$, which means in particular that there is no real rational $J$-holomorphic curve in the class $d$ for a generic $J$. This is the symplectic analogue of the algebraic statement that there is no holomorphic curve on a generic $K3$ surface. However, if in the algebraic case the moduli of the $K3$ surfaces admitting holomorphic curves is well-behaved, the image of the map $\pi$ is not.

Thus, rather than restricting our attention to the image of $\pi$, we will consider loops of almost-complex structures in order to track down the real rational curves appearing in the class $d$. Taking a loop $\gamma : S^1\rightarrow \rjo$ we will study the fiber product $\rmdo\fp{\pi}{\gamma}S^1$. When $\gamma$ is generic enough, this product is a manifold of dimension $0$, and when $d$ is a primitive class, it consists of a finite number of elements (see Lemma \ref{finite}). However, the cardinal of this set can vary along a homotopy of $\gamma$. In order to get an invariant out of it, we need to introduce an additional topological structure on $X$.

Namely, since the first Chern class of $(X,\Omega)$ vanishes, $X$ admits a $Spin$ structure. Suppose that $X$ admits a $Spin$ structure which is moreover invariant under the action of $c_X$; this is the case for example when $X$ is simply-connected. Then this structure admits two different orientations; we refer the reader to the \S \ref{spinpar} for the definitions, and simply mention here that when the real part of $X$ is non-empty, it is in fact an orientable surface and the data of an orientation for a real $Spin$ structure is equivalent to the choice of one of the two semi-orientations associated to the $Spin$ structure. Let $\ssp$ be an oriented real $Spin$ structure on $X$. Then it induces naturally an orientation of $\rmdo\fp{\pi}{\gamma}S^1$ (see Theorem \ref{first}). In other words, we can define a signed count $\chi_d^{\ssp}(X,\Omega,c_X;\gamma)$ of the elements of $\rmdo\fp{\pi}{\gamma}S^1$. We prove the following result (see Theorems \ref{first} and \ref{second}).

\begin{theon}
  Let $(X,\Omega = (\omega_t)_{t\in S^1},c_X)$ a one-parameter family of real symplectic manifolds of dimension $4$ with vanishing first Chern class, and suppose that $(X,c_X)$ admits an oriented real $Spin$ structure $\ssp$. Then for all primitive classes $d\in H_2(X,\Z)$ such that $(c_X)_* d = -d$, the signed count $\chi_d^{\ssp}(X,\Omega,c_X;\gamma)\in\Z$ is invariant under homotopy of $\gamma$ and $\Omega$.
\end{theon}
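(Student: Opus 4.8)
The plan is to prove invariance under homotopy of $\gamma$ first, and then deduce invariance under homotopy of $\Omega$ by a similar one-parameter argument. For the first part, suppose $\gamma_0,\gamma_1 : S^1\to\rjo$ are two generic loops joined by a homotopy $\Gamma : [0,1]\times S^1\to\rjo$. After a small perturbation keeping the endpoints fixed, I would arrange $\Gamma$ to be generic, so that the fiber product $\rmdo\fp{\pi}{\Gamma}([0,1]\times S^1)$ is a compact $1$-dimensional manifold with boundary, the boundary being exactly $\rmdo\fp{\pi}{\gamma_1}S^1$ minus $\rmdo\fp{\pi}{\gamma_0}S^1$ (with signs). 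Compactness is the crucial input: since $d$ is primitive, a sequence of real rational curves in the class $d$ cannot bubble off into a nodal configuration (each component would carry a nonzero multiple of a proper sub-class, impossible for a primitive class unless one component is constant, and constant components are excluded by stability), so Gromov compactness gives that the fiber product over the compact base $[0,1]\times S^1$ is compact — this is the content behind Lemma \ref{finite} extended to the family. Then orienting this cobordism using the oriented real $Spin$ structure $\ssp$ exactly as in Theorem \ref{first}, the signed count of the boundary vanishes, which gives $\chi_d^{\ssp}(X,\Omega,c_X;\gamma_1) = \chi_d^{\ssp}(X,\Omega,c_X;\gamma_0)$.

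For invariance under homotopy of $\Omega$, let $(\Omega_s)_{s\in[0,1]}$ be a homotopy of loops of symplectic structures, all with vanishing first Chern class and all making $c_X$ anti-symplectic. The associated spaces $\rjo[\Omega_s]$ fit together into a fibration over $[0,1]$, and one can choose a loop $\gamma_s\in\rjo[\Omega_s]$ depending continuously on $s$ (the fibers are non-empty and contractible, or at least connected, being spaces of tamed $c_X$-anti-invariant almost-complex structures). The total space $\bigcup_s \rmdo[\Omega_s]\fp{\pi}{\gamma_s}S^1$ is then, after a generic perturbation of the family $(\gamma_s)$, a compact oriented $1$-manifold with boundary $\rmdo[\Omega_1]\fp{\pi}{\gamma_1}S^1$ minus $\rmdo[\Omega_0]\fp{\pi}{\gamma_0}S^1$; the same primitivity argument gives compactness, and the same $Spin$-orientation argument from Theorem \ref{first} orients the cobordism coherently. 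Hence the signed counts at $s=0$ and $s=1$ agree. Combined with the first part — which shows the count is independent of the choice of $\gamma_s$ within each $\rjo[\Omega_s]$ — this establishes the full statement.

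The main obstacle is the compactness of the fiber product over the $1$-dimensional parameter space. While primitivity of $d$ rules out genuine bubbling, one must still control the behavior of the almost-complex structures along the homotopy: the elements of $\rjo$ are only tamed (not compatible) by the symplectic forms in $\Omega$, and the taming form may itself vary, so the uniform energy bound needed for Gromov compactness requires that the loop of symplectic forms, and its homotopies, stay in a controlled region. I expect this to be handled by the compactness of $S^1$ (resp. $[0,1]\times S^1$) together with the fact that a continuous family of taming forms on a compact base admits uniform taming constants. A secondary technical point is checking that the $Spin$-orientation of the cobordism restricts to the correct orientations on the two boundary components — this should follow formally from the construction in \S\ref{spinpar} and Theorem \ref{first}, since the orientation is defined fiberwise over the base in a way that is manifestly continuous, but it must be verified that no sign is introduced by the two boundary orientation conventions (outward-normal-first), which is exactly what makes the two terms appear with opposite signs and the total count invariant.
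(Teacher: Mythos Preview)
Your proposal has a genuine gap: the compactness claim is false. Primitivity of $d$ rules out \emph{multiple covers} (an irreducible limit curve that is not simple would force $d = k e$ with $k>1$), but it does \emph{not} rule out reducible limits. A primitive class $d$ can perfectly well split as $d = d_1 + d_2$ with $d_1, d_2$ nonzero classes that are not multiples of $d$, so a sequence of irreducible curves in the class $d$ can Gromov-converge to a nodal curve with two (or more) components in distinct homology classes. This is exactly why the paper remarks, just after stating the theorem, that ``since $\pi$ is not proper in general, it is not clear how $\chi^{\ssp}_d(X,\Omega,c_X;\gamma)$ varies along a homotopy of $\gamma$.'' The argument in Lemma~\ref{finite} avoids reducible curves only for a \emph{generic one-dimensional} family $\gamma:S^1\to\rjo$, by a codimension count: the relevant strata of reducible configurations map to $\rjo$ with index $-2$ or less. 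Over the two-dimensional base $[0,1]\times S^1$ of your homotopy, these index $-2$ strata are hit generically at isolated points, and the fiber product $\rmdo\fp{\pi}{\Gamma}([0,1]\times S^1)$ acquires ends there. Your cobordism is therefore not compact, and the standard ``signed boundary count is zero'' argument does not apply.

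The paper's route is substantially more involved for precisely this reason. It passes to the Gromov compactification $\crmdod$, isolates each reducible curve $\Cl$ in a ``black box'' neighborhood, and shows that the contribution of each such black box vanishes. To do this it introduces $\P$-thickened curves (a finite-dimensional perturbation making the linearized operator at $\Cl$ surjective), proves a gluing theorem (Theorem~\ref{gluing}) that identifies a neighborhood of $\Cl$ in the thickened moduli space with an open set in Euclidean space, and then checks---this is the bulk of \S\ref{orsection}---that the gluing map is orientation-preserving with respect to the $Spin$-induced orientations. Only then can one perturb the obstruction section to conclude that the ends of the cobordism near $\Cl$ cancel in pairs. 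None of this is bypassed by primitivity; primitivity is used only to exclude multiple covers, so that every curve appearing is simple and the moduli spaces are manifolds where expected.
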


 Note that if $\Omega$ is constant, then $\rjo$ is contractible and hence the invariant given by the previous theorem vanishes. Taking one-parameter families of symplectic manifolds, the space $\rjo$ is not necessarily contractible anymore. For example, when $(X,\omega,c_X)$ is a $K3$ surface with a Kähler form $\omega$, it comes with a loop of Kähler structures (a subset of the twistor space) oriented by the choice of an oriented real $Spin$ structure on $(X,c_X)$. Applying the previous theorem we get an invariant $\chi_d(X,c_X)$ associated to the deformation class of the real $K3$ surface (see \S \ref{k3par}). Moreover, when $(X,c_X)$ admits a primitive linear system $h$ invariant by $c_X$ and is generic enough, then $\chi_h(X,c_X)$ actually counts the real rational curves in $h$, with appropriate signs (see Proposition \ref{twistortr}). In fact, we show that this count coincide with the one defined by Kharlamov and R\u{a}sdeaconu up to a sign (see Theorem \ref{samekr}).

 \begin{theon}
   Let $(X,c_X,h)$ be a real projective $K3$ surface with a non-empty $c_X$-invariant linear system which are generic enough. The absolute values of the count $\chi_h(X,c_X)$ and the count defined by Kharlamov and R\u{a}sdeaconu in \cite{kharlr} are equal.
 \end{theon}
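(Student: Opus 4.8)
The plan is to establish a dictionary between the two constructions along a suitable path in the twistor line, and then to check that the local contributions match up to a global sign.  First I would fix a generic Kähler structure $(X,\omega,c_X)$ representing the given deformation class and a primitive $c_X$-invariant polarization $h$, and consider the loop $\gamma$ in $\R\mathcal{J}^l_\Omega(X)$ coming from the twistor sphere (as in \S\ref{k3par}).  By Proposition \ref{twistortr}, after a small generic perturbation the elements of $\R\mathcal{M}^d(X,\Omega)\fp{\pi}{\gamma}S^1$ are in bijection with the real rational curves in the linear system $h$ on the integrable complex structure sitting at one pole of the twistor sphere; these are exactly the curves counted by Kharlamov and R\u{a}sdeaconu.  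So the two finite sets being counted are canonically identified, and the entire content of the theorem is the comparison of signs.

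The next step is to recall how each side attaches a sign to a given real rational curve $C$.  On the Welschinger/Kharlamov–R\u{a}sdeaconu side the sign is read off from the parity of certain topological data of $C$ inside the real surface $\R X$ (for instance, in the Kharlamov–R\u{a}sdeaconu setup it is governed by the behaviour of the real normal bundle, or equivalently by a count of solitary nodes / by a $Spin$- or $Pin$-theoretic quantity).  On the side of $\chi_h(X,c_X)$ the sign is, by Theorem \ref{first}, the sign of the isomorphism induced on the relevant determinant lines by the oriented real $Spin$ structure $\ssp$ — concretely, the comparison of the orientation of $\det \R\mathcal{M}^d(X,\Omega)$ coming from $\ssp$ with the orientation of the base $S^1$ pulled back along $\pi\circ\gamma$.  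I would therefore reduce the sign on the left-hand side, for each $C$, to a computation of the spectral flow / the behaviour of the real linearized Cauchy–Riemann operator $\R\OB$ along the twistor loop based at $C$, and compare it termwise with the $Spin$-orientation recipe already set up in the proof of Theorem \ref{first}.

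The key technical input is the standard identification of the determinant line of a real Cauchy–Riemann operator on $(C,\text{its real structure})$ with a $Spin$/$Pin$-datum of the real curve — essentially the index computation underlying Welschinger's invariants, adapted to the $c_1=0$ situation where the relevant bundle is the normal bundle $N_C$ of the curve, a bundle of the ``correct'' degree so that the real operator has index $0$.  Running this identification at the integrable pole of the twistor sphere, the orientation transported from $\ssp$ becomes precisely the topological sign of Kharlamov–R\u{a}sdeaconu, up to one overall sign that is independent of $C$ (coming from the choice of which semi-orientation of $\ssp$ one uses and from orientation conventions on $S^1$ and on the twistor sphere).  Absorbing this overall sign is why the statement is only about absolute values.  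Finally I would note that neither count depends on the generic choices (invariance of $\chi_h$ by the main theorem, invariance of the Kharlamov–R\u{a}sdeaconu count by \cite{kharlr}), so the equality of the two, having been checked at one convenient generic point, holds in general.

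The main obstacle I expect is the sign bookkeeping: one must pin down, simultaneously and consistently, the orientation of the moduli space induced by $\ssp$, the orientation of the twistor loop, the identification of the twistor-pole curves with curves in the linear system $h$, and the precise form of the Kharlamov–R\u{a}sdeaconu sign, and verify that all the auxiliary choices cancel except for one overall ambiguity — which is exactly the reason the theorem is phrased in terms of absolute values rather than as an equality on the nose.
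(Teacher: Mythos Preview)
Your high-level strategy matches the paper's: use Proposition~\ref{twistortr} to identify the finite sets being counted, then compare signs curve by curve and absorb a global ambiguity into the absolute value. But the substance of the theorem is entirely in that sign comparison, and here your proposal stays at the level of an assertion (``the orientation transported from $\ssp$ becomes precisely the topological sign of Kharlamov--R\u{a}sdeaconu, up to one overall sign'') without supplying the mechanism that makes it true. The suggestion to compute a spectral flow along the twistor loop is also off target: the sign at a curve $u$ is purely local, namely the sign of the isomorphism $d_0\gamma_{\omega,I}^{\ssp}:T_0S^1\to\coker(\OB_u)$ between one-dimensional oriented spaces (Remark~\ref{imprem}), and no transport around the loop is needed or used.

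The paper carries out this local comparison via Proposition~\ref{signk3}, and two concrete ingredients do the work. First, the holomorphic $2$-form $v\in H^0(X,K_X)_{+1}$ furnishes a \emph{canonical} isomorphism $f:N_u\to K_{\cpo}$, $[\xi]\mapsto v(du(\cdot),\xi)$, for every immersed rational curve $u$; this is what makes the target bundle, and hence the ``universal'' sign $\epsilon$, genuinely independent of $u$. Pushing $d_0\gamma_{\omega,I}^{\ssp}$ through $f$ and integrating gives $t\,\lambda^{-1}g(du(x),du(x))$, which is always positive, so $f\circ d_0\gamma$ preserves orientations for every $u$. Second, one must compare the orientation on $\ddet(\OB_u)$ induced by the $Spin$ recipe of Propositions~\ref{dbj} and~\ref{detd} with the one pulled back from $\ddet(\DB_i^*)$ via $f$; here Lemma~\ref{pintriv} is the key, because the induced map $F:u^*TX\to T\cpo\oplus K_{\cpo}$ sends the $Spin$ structure $\zeta_{u(\rpo)}$ to $(-1)^{m(u)+1}\zeta_{K_{\cpo}}$, which is precisely where the solitary-node count $m(u)$ enters. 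Without these two pieces---the $v$-trivialization of $N_u$ and the Lemma~\ref{pintriv} bookkeeping---there is no argument that the curve-dependent part of $\varepsilon(u)$ is exactly $(-1)^{m(u)+1}$, and your outline does not supply them.
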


We do not address the case of curves appearing in non-primitive homology classes, but we think that our approach extends to this case to provide a rational invariant and hope to show it in a future paper.

\section{Definition of the count}

\subsection{Oriented Fredholm maps}

In order to define the signed count of real rational pseudo-holomorphic curves we want, we need to introduce the notion of orientability for a map between two Banach manifolds. Let $M$ and $N$ be two Banach manifolds and $f : M\rightarrow N$ a smooth Fredholm map, i.e. for every point $x$ in $M$, the differential of $f$ at $x$ has finite dimensional kernel and cokernel. As is usual, we define the index of $f$, $\ind(f)$ to be $\dim(\ker(d_x f)) - \dim(\coker(d_x f)$ for any $x\in M$. Given such a map, one can also define a continuous real line bundle $\ddet(f)$ over $M$ (see e.g. \cite{zinger}), whose fiber over a point $x$ is the determinant of the differential $d_x f$
\[
\ddet(f) = \Lambda_{\R}^{\max}\ker(d_x f)\otimes \left(\Lambda_{\R}^{\max}\coker(d_x f)\right)^*.
\]

Since $M$ and $N$ are not necessarily finite dimensional, the notion of orientation on those manifolds is not clear. However, one can define a notion of relative orientation with respect to $f$ (see e.g. \cite{wang}).

\begin{definition}
  Let $f : M\rightarrow N$ be a smooth Fredholm map between two Banach manifolds. We say that $f$ is orientable (resp. oriented) if the line bundle $\ddet(f)$ is orientable (resp. oriented).
\end{definition}

On the other hand, if $f : M\rightarrow N$ is a smooth Fredholm map between two Banach manifolds, consider $g : L \rightarrow N$ a smooth map with $L$ a finite dimensional manifold with boundary. Suppose that $f$ and $g$ are transverse and write $M \fp{f}{g} L$ the fiber product along those two maps. According to the implicit function theorem, this is a smooth manifold with boundary $M\fp{f}{\partial g} \partial L$. It comes moreover with two smooth maps $\pi_M : M \fp{f}{g} L \rightarrow M$ and $\pi_L : M \fp{f}{g} L \rightarrow L$. Using the pullbacks by $\pi_M$ and $\pi_L$, we can consider the bundles $\ddet(f)$ and $\det(T L) = \Lambda_{\R}^{\dim L} T L$ over $M \fp{f}{g} L$. In the following, we will omit the pullback notation when the base of the bundles is clear.

We now have the following key fact.

\begin{prop}\label{fporient}
  Let $f : M\rightarrow N$ be a smooth Fredholm map between two Banach manifolds and let $g : L \rightarrow N$ be a smooth map with $L$ a finite dimensional manifold with boundary. Suppose that $f$ and $g$ are transverse.

Then there exist natural isomorphisms of line bundles
\begin{align*}
\det (T \left(M\fp{f}{g}L\right)) \overset{\mathfrak{o}}{=} \ddet(f)\otimes \det(T L),
\intertext{and}
\det (T \left(M\fp{f}{\partial g} \partial L\right)) \overset{\mathfrak{o}_{\partial}}{=} \ddet(f)\otimes \det(T \partial L),
\end{align*}

over  $M\fp{f}{g} L$ and $M\fp{f}{\partial g} \partial L$, such that when restricted over $M\fp{f}{\partial g} \partial L$, we have the following commuting square

\[
\xymatrix{
\det (T \left(M\fp{f}{\partial g}\partial L\right))  \otimes \mathcal{N}_{\partial (M\fp{f}{g} L)} \ar[r]^-{\mathfrak{o}_{\partial}\otimes d\pi_L} \ar[d] & \ddet(f)\otimes \det(T\partial L)\otimes \mathcal{N}_{\partial L} \ar[d]\\
\det (T \left(M\fp{f}{g} L\right))\ar[r]^{\mathfrak{o}} & \ddet(f)\otimes \det(T L) 
}
\]

where $\mathcal{N}_{\partial (M\fp{f}{g}L)} =  T \left(M\fp{f}{g}L\right) /T \left(M\fp{f}{\partial g} \partial L\right)$ and $\mathcal{N}_{\partial L} = T L / T\partial L$.

 In particular, when $f$ and $L$ are oriented, then so is $M \fp{f}{g} L$.
\end{prop}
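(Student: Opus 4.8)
The plan is to work pointwise over a point $y = (x, p) \in M\fp{f}{g}L$, construct the isomorphisms from the transversality data using a snake-lemma-type argument on exact sequences of finite-dimensional vector spaces, and then check continuity and the compatibility square. Fix $y = (x,p)$ with $f(x) = g(p) =: q \in N$. Transversality of $f$ and $g$ means that $d_x f$ and $d_p g$ together surject onto $T_q N$; equivalently the map $d_x f - d_p g : T_x M \oplus T_p L \to T_q N$ is onto, and $T_y(M\fp{f}{g}L) = \ker(d_x f - d_p g)$. The first thing I would do is record the short exact sequence
\[
0 \to T_y\!\left(M\fp{f}{g}L\right) \to T_x M \oplus T_p L \xrightarrow{\,d_xf - d_p g\,} T_q N \to 0,
\]
which, however, does not immediately give the determinant isomorphism because $T_x M$ is infinite-dimensional. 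The standard fix is to replace $d_x f : T_x M \to T_q N$ by an algebraic representative: choose a finite-dimensional complement $C \subset T_x M$ to a subspace on which $d_x f$ is injective with closed image of finite codimension — concretely, pick a finite-dimensional subspace $V \subset T_q N$ mapping isomorphically onto $\coker(d_x f)$, so that $T_q N = \im(d_x f) \oplus V$, and set $K = \ker(d_x f)$. One then gets the exact sequence
\[
0 \to K \to T_y\!\left(M\fp{f}{g}L\right) \xrightarrow{\,d\pi_L\,} T_p L \xrightarrow{\,\rho\,} V \to \coker\!\big(d_y\pi_L\big) \to 0,
\]
where $\rho$ is the composition $T_p L \xrightarrow{d_p g} T_q N \to V$; transversality is exactly the statement that $T_p L \oplus \im(d_x f) = T_q N$, i.e. that this four-term sequence is exact on the right with the $V$-term being all of the cokernel of $d\pi_L$. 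From an exact sequence of finite-dimensional spaces $0 \to A \to B \to C \to D \to 0$ one has a canonical isomorphism $\Lambda^{\max}B \otimes \Lambda^{\max}D \cong \Lambda^{\max}A \otimes \Lambda^{\max}C$; applying this with $(A,B,C,D) = (K,\ T_y(\cdots),\ T_p L,\ V)$ and using $\ddet(f)|_x = \Lambda^{\max}K \otimes (\Lambda^{\max}V)^*$ (note $\coker(d_x f) \cong V$) produces exactly
\[
\det\!\big(T_y(M\fp{f}{g}L)\big) \;\cong\; \ddet(f)|_x \otimes \det(T_p L),
\]
which is the desired $\mathfrak{o}$. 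I would then note that the construction is independent of the auxiliary choice of $V$ (two choices are related by a further exact sequence / a contractible space of choices), so $\mathfrak{o}$ glues to a global isomorphism of line bundles; the same construction applied to $\partial g : \partial L \to N$ (which is still transverse to $f$ near boundary points, by the definition of transversality of a map from a manifold with boundary) gives $\mathfrak{o}_\partial$.

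For the compatibility square, the point is that both rows are instances of the same algebraic recipe applied to the pair of nested exact sequences coming from $\partial L \hookrightarrow L$: there is a commutative ladder of four-term exact sequences (the interior one for $g$, the boundary one for $\partial g$) linked by the inclusions $T_p\partial L \hookrightarrow T_p L$ and $T_y(M\fp{f}{\partial g}\partial L) \hookrightarrow T_y(M\fp{f}{g}L)$ over boundary points $y$, with vertical quotients $\mathcal{N}_{\partial L}$ and $\mathcal{N}_{\partial(M\fp{f}{g}L)}$ respectively — and crucially $d\pi_L$ identifies these two normal lines (this is where the $d\pi_L$ in the top arrow of the square comes from, and it uses that $\pi_L$ restricted to the fiber product is a submersion onto $L$ transverse to $\partial L$). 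I would then invoke the naturality of the determinant-of-an-exact-sequence isomorphism with respect to such a ladder: it converts the ladder into precisely the asserted commuting square of determinant line bundles. This is a diagram chase in multilinear algebra, carried out fiberwise and then checked to be continuous in $y$.

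The main obstacle I anticipate is not any single deep idea but bookkeeping: (i) making the determinant-of-an-exact-sequence isomorphism and especially its sign/orientation conventions completely explicit and consistent between the interior and boundary cases, since the whole point of the proposition is later used to orient moduli spaces, so an off-by-a-sign here would be fatal; (ii) verifying genuine independence of $\mathfrak{o}$ from the choice of the finite-dimensional stabilization $V$ and checking that the resulting isomorphism is continuous (not merely pointwise-defined) — this requires working in a local trivialization where $V$ can be chosen locally constant and $\coker(d_x f)$ has locally constant dimension, which is the standard way $\ddet(f)$ is built as a topological line bundle in the first place; and (iii) pinning down the correct placement of the normal-bundle factors $\mathcal{N}$ so that the square genuinely commutes rather than commuting up to sign. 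The last sentence of the proposition — that $M\fp{f}{g}L$ is oriented when $f$ and $L$ are — is then immediate: an orientation of $\ddet(f)$ and of $\det(TL)$ tensor together, and $\mathfrak{o}$ transports this to an orientation of $\det(T(M\fp{f}{g}L))$.
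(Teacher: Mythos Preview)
Your argument is correct and arrives at the same isomorphism, but via a different decomposition than the paper. The paper observes that $T_{(x,p)}(M\fp{f}{g}L)$ is the kernel of the \emph{surjective} Fredholm operator $d_xf\oplus(-d_pg):T_xM\oplus T_pL\to T_qN$, so its determinant line equals $\ddet(d_xf\oplus(-d_pg))$; it then feeds this into the short exact sequence of Fredholm operators $0\to(T_xM,d_xf)\to(T_xM\oplus T_pL,d_xf\oplus(-d_pg))\to(T_pL,0)\to 0$ and invokes multiplicativity of determinant lines to get $\ddet(d_xf)\otimes\det(T_pL)$ directly. You instead project to $T_pL$, choose a finite-dimensional $V\cong\coker(d_xf)$, and extract the four-term finite-dimensional sequence $0\to K\to T_y(M\fp{f}{g}L)\to T_pL\to V\to 0$. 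The paper's route is shorter because it delegates the stabilization-independence and continuity checks to the existing machinery of Fredholm determinant lines (which it cites), whereas your route is more self-contained and makes the orientation bookkeeping explicit---which, as you note, is exactly the point that matters downstream. One small correction: in your compatibility argument you assert that $\pi_L$ is a submersion, which is false in general (e.g.\ when $\ind(f)<0$); what you actually need, and what does hold, is that $d\pi_L$ induces an isomorphism $\mathcal{N}_{\partial(M\fp{f}{g}L)}\to\mathcal{N}_{\partial L}$, and this follows from transversality of $f$ with $\partial g$ rather than from any submersion property of $\pi_L$.
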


 \begin{proof}
The fiber product $M\fp{f}{g} L$ is a smooth submanifold of the product $M \times L$ and at each point $(x,y) \in M\fp{f}{g} L$, its tangent space is the kernel of the surjective Fredholm map $d_xf\oplus -d_yg : T_xM \oplus T_yL \rightarrow T_{f(x)} N$. Thus, we have the equality $\det(T_{(x,y)} \left(M\fp{f}{g} L\right)) = \ddet(d_xf \oplus -d_yg)$.

On the other hand, for any $(x,y)\in M\fp{f}{g} L$, we have the following commutative diagram:
\[
\xymatrix{
0 \ar[r] & T_x M \ar[r] \ar[d]^{d_x f} & T_x M \oplus T_y L \ar[r] \ar[d]^{d_x f \oplus -d_y g} & T_y L \ar[r] \ar[d] & 0 \\
0 \ar[r] & T_{f(x)} N \ar[r] & T_{f(x)} N \ar[r] & 0 \ar[r] & 0.
}
\]
Thus, we have a natural isomorphism $\ddet(d_xf\oplus -d_yg) = \ddet(d_x f) \otimes \ddet(T_y L)$.

All in all, for all $(x,y)\in M\fp{f}{g} L$, we have an isomorphism 
\[
{\mathfrak{o}_{(x,y)} : \det(T_{(x,y)} \left(M\fp{f}{g} L\right)) \rightarrow \ddet(d_x f) \otimes \det(T_y L)}.
\]
 Using local trivializations for those three line bundles, one can then check that all the considered isomorphisms vary continuously.

We can then do the same construction for the boundary of $M\fp{f}{g} L$ to get the desired isomorphism $\mathfrak{o}_{\partial}$. The compatibility between those two isomorphisms is straightforward.
 \end{proof}

 \begin{remark}\label{imprem}
In the present paper, we will mainly use Proposition \ref{fporient} when we have $\dim(\partial L) + \ind(f) = 0$. In this case, the fiber product $M\fp{f}{g} L$ is a smooth manifold of dimension $1$ and the boundary is a discrete set of points. If $f$ and $L$ are oriented, then $\partial L$ is oriented using the inward normal convention. Thus $M\fp{f}{g} L$ and its boundary inherit an orientation that is again compatible with the inward normal convention. The orientation at a point $(x,y)$ of $\partial\left(M\fp{f}{g} L\right)$ is simply given by a sign which is the following : since $f$ and $\partial g$ are transverse at the point $f(x)=g(y)$, $d_y (\partial g) : T_y \partial L \rightarrow \coker(d_x f)$ is an isomorphism between two oriented vector spaces (since $\ker(d_x f) = \{0\}$ because $\ind(f)\leq 0$) so can either preserve of exchange the two orientations, which gives the desired sign.

In particular, if $f$ is a proper map and $L$ is compact, then the sum of the signs of the boundary points of $M\fp{f}{g} L$ gives zero. If $f$ is not proper, this is no longer true in general.
 \end{remark}

\subsection{Moduli space of real rational pseudo-holomorphic curves} 

We briefly recall the construction of the moduli space of real rational pseudo-holomorphic curves given in \cite{wel1}.

Let $(X,\Omega,c_X)$ a triple consisting of a smooth closed manifold of dimension $4$, of a smooth family $\Omega = (\omega_t)_{t\in S^1}$ of symplectic forms on $X$ and of an involutive diffeomorphism $c_X$ of $X$ such that for all $t\in S^1$, $c_X^* \omega_t = -\omega_t$. Fix an integer $l > 1$ large enough and define $\jo$ to be the set of almost-complex structures $J$ on $X$ which are of class $\Cl^l$ and tamed by some $\omega_t$, $t\in S^1$. Denote by $\rjo$ the subset of $\jo$ consisting of the almost-complex structures $J \in \jo$ such that $d c_X \circ J = -J\circ dc_X$. As proved in \cite{wel1}, the set $\rjo$ is a separable Banach manifold which is non-empty. However, it need not be contractible. It is nonetheless connected. Thus, the first Chern class $c_1(X,\Omega)\in H^2(X,\Z)$ of $(X,\Omega)$ is well-defined.

Let $S$ be an oriented $2$-sphere. Let $\js$ be the space of complex structures on $S$ of class $\Cl^l$ and compatible with the given orientation. Finally, fix an integer $1\leq k\leq l$ and a real number $p >2$.

\begin{definition}
  Let $d \in H_2(X,\Z)$ be a non-zero homology class. A parameterized rational pseudo-holomorphic curve on $(X,\omega,c_X)$ in the class $d$ is a triple $(u,j,J)\in L^{k,p}(S,X)\times \js \times \jo$ such that
\[
d u + J\circ d u \circ j = 0,\text{ and } u_*[S] = d.
\]

We say that $(u,j,J)$ is simple if it cannot be written as $u'\circ \phi$ where $(u',j',J')$ is a pseudo-holomorphic curve and $\phi : (S,j)\rightarrow (S,j')$ is a non-trivial ramified covering.
\end{definition}

The set $\pdo$ of simple parameterized rational pseudo-holomorphic curves in the class $d$ is a separable Banach manifold of class $\Cl^{l-k}$ (see \cite{MDS}, Proposition 3.2.1). Suppose that $(c_X)_* d = -d$. Then the group $Diff(S)$ of diffeomorphisms of $S$ of class $\Cl^{l+1}$ acts on $\pdo$ by reparameterization. Denote by $Diff^+(S)$ the subgroup of $Diff(S)$ consisting of orientation preserving diffeomorphisms. Then the quotient $\mdo$ of $\pdo$ by $Diff^+(S)$ can be seen as a Banach manifold of class $\Cl^{l-k}$ (see \cite{shev}, Corollary 2.2.3). Moreover, $\mdo$ comes with an action of $Diff(S)/Diff^+(S)\cong \Z/2\Z$. We denote by $\rmdo$ the fixed point set of this action, which is a separable Banach manifold of class $\Cl^{l-k}$ and $\pi : \rmdo \rightarrow \rjo$ the natural projection which is a smooth map. Then, any point of $\rmdo$ admits a lift $(u,j,J)\in\pdo$ such that there exists a unique involutive orientation-reversing diffeomorphism $c_S$ of $S$ with $u\circ c_S = c_X\circ u$ (see Lemma 1.3 in \cite{wel1}). We will say that an element of $\rmdo$ is a real rational pseudo-holomorphic curve in the class $d$.

Let us recall the following from Proposition 1.9 in \cite{wel1}.

\begin{prop}
  The map $\pi$ is Fredholm of index $c_1(X,\Omega)d~-~1$.
\end{prop}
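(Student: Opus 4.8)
The plan is to compute $\ind \pi$ pointwise and reduce it to a Riemann--Roch calculation for a real Cauchy--Riemann operator on the genus-$0$ domain. I would fix $m\in\rmdo$, lift it to a parameterized real curve $(u,j,J)\in\pdo$ together with its real structure $c_S$, and first use the reparameterization action to absorb the deformations of $j$: since $H^1(S,TS)=0$, the infinitesimal action $X\mapsto\mathcal{L}_X j$ of $c_S$-equivariant vector fields on $T_j\rjs$ is surjective with kernel the Lie algebra of the group $G=\aut^+(S,j,c_S)$, which is $3$-dimensional (it is $PSL_2(\R)$ or $SO(3)$ according to whether the real locus of the curve is nonempty). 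So a neighbourhood of $m$ in $\rmdo$ is modelled on the quotient by $G$ of the set of $c_S$-equivariant simple solutions $(u',J')$ of $\DB_{J',j}u'=0$, and under this model $\pi$ becomes $(u',J')\mapsto J'$.

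Next I would linearize at $(u,J)$. Writing $D_u$ for the $u$-linearization of $\DB$ and $D_u^{\R}$ for its restriction to $c_S$-equivariant sections of $u^*TX$ (valued in $c_S$-equivariant $(0,1)$-forms), the tangent space $T_m\rmdo$ is identified with the kernel of $(\xi,Y)\mapsto D_u^{\R}\xi + Y\circ du\circ j$ modulo the Lie algebra $\mathfrak{g}$ of $G$, and $d_m\pi$ becomes $[(\xi,Y)]\mapsto Y$. Then $\ker d_m\pi=\ker D_u^{\R}/\mathfrak{g}$, the $\mathfrak{g}$-action being free because a simple curve is somewhere an immersion, so $du(X)\ne 0$ for $X\ne 0$; and $\coker d_m\pi\cong\coker D_u^{\R}$, because $Y$ lies in the image of $d_m\pi$ precisely when $[Y\circ du\circ j]=0$ in $\coker D_u^{\R}$, and the map $Y\mapsto[Y\circ du\circ j]$ is surjective. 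This last surjectivity is exactly the transversality property guaranteeing that $\rmdo$ is a Banach manifold, which I would quote from the construction recalled above (Welschinger, \cite{wel1}). Altogether $d_m\pi$ is Fredholm with $\ind d_m\pi=\ind D_u^{\R}-3$.

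It then remains to compute $\ind D_u^{\R}$. The operator $D_u$ is a real-linear Cauchy--Riemann operator on the rank-$2$ complex bundle $u^*TX$ over the genus-$0$ surface $S$, so by Riemann--Roch its real index is $2\big(c_1(u^*TX)[S]+2\big)=2\big(c_1(X,\Omega)d+2\big)$; the lower-order anti-linear term distinguishing $D_u$ from a complex-linear operator does not affect the index, and the real structure $(c_S,dc_X)$ commutes with $D_u$ and splits this index in half, whence $\ind D_u^{\R}=c_1(X,\Omega)d+2$. Substituting gives $\ind d_m\pi=c_1(X,\Omega)d-1$, a number independent of $m$, which proves the proposition. (Alternatively, when $u$ is an immersion one can bypass $G$ altogether by working with the normal bundle $N$ of degree $c_1(X,\Omega)d-2$, for which $\ind=\deg N+1=c_1(X,\Omega)d-1$ at once.)

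The step I expect to be the main obstacle is the identification $\coker d_m\pi\cong\coker D_u^{\R}$: it rests on the surjectivity of the universal linearized operator once $J$-variations are allowed, i.e. the transversality lemma underlying the manifold structure of $\rmdo$, and one must also make the slice reduction (fixing $j$ and $c_S$) rigorous and check that the residual automorphism group is always $3$-dimensional. The Riemann--Roch index of the real operator $D_u^{\R}$ is standard but worth citing carefully, e.g. through a doubling argument on the complex double of $(S,c_S)$.
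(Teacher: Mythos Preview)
Your argument is correct. The paper itself does not prove this proposition; it simply recalls it from Proposition~1.9 of \cite{wel1}, so there is no in-paper proof to compare against. Your sketch is the standard one underlying Welschinger's result: slice at a fixed $(j,c_S)$, identify $\ker d_m\pi\cong\ker D_u^{\R}/\mathfrak g$ and $\coker d_m\pi\cong\coker D_u^{\R}$ via the universal transversality lemma, and apply the real Riemann--Roch formula $\ind D_u^{\R}=c_1(u^*TX)[S]+\operatorname{rk}=c_1(X,\Omega)d+2$ on a genus-$0$ domain, then subtract $\dim\mathfrak g=3$. The points you flag as needing care (surjectivity of $Y\mapsto[Y\circ du\circ j]$, the slice reduction, and the halving of the index under the real structure) are exactly the ingredients supplied in \cite{wel1} and \cite{shev}, which the paper also invokes elsewhere (see the discussion around diagram~\eqref{diagg} and the reference to Corollary~2.2.3 of \cite{shev}).
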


\subsection{Oriented real $Spin$ structures}\label{spinpar}

Let $(X,c_X)$ be a smooth, closed connected manifold of dimension $n$ equipped with an involutive diffeomorphism $c_X$. Let $(E,g,c_E)$ be an oriented real vector bundle of rank $2r$ over $X$ equipped with a metric $g$ and an involutive automorphism $c_E$ lifting $c_X$ and being an orientation preserving (resp. reversing) isometry for $g$ in the fibers when $r$ is even (resp. odd). We say that $c_E$ is a real structure. Let us denote by $F_E$ the $O(2r)$-principal bundle of frames of $E$ and by $F_E^+$ the $SO(2r)$-principal bundle of positively oriented frames. The bundle $F_E$ admits a conjugate automorphism $\ue = (e_1,e_2,\ldots,e_{2r})\in F_E \mapsto \overline{\ue} = (e_1,-e_2,\ldots,e_{2r-1},-e_{2r})\in F_E$, i.e. if $\ue\in F_E$ and $M\in O(2r)$ then $\overline{\ue.M} = \overline{\ue}.\overline{M}$ where $\overline{M} = T M T$ and $T$ is the diagonal matrix $((-1)^{i}\delta_{ij})_{1\leq i,j\leq 2r}$. The involution $c_E$ induces one on $F_E^+$ defined by $\overline{c_E} : (e_1,\ldots,e_{2r})\in F_E^+\mapsto (c_E(e_1),-c_E(e_2),\ldots,c_E(e_{2r-1}),-c_E(e_{2r}))\in F_E^+$, which is a conjugate automorphism of $F_E^+$.

A $Spin$ structure on $E$ is a $Spin(2r)$-principal bundle $S_E$ over $X$ equipped with a double cover $S_E\rightarrow F_E^+$ which lifts the identity and restricts as the double cover $Spin(2r)\rightarrow SO(2r)$ on each fiber. A $Spin$ structure $S_E\rightarrow F_E^+$ is said to be real if $\overline{c_E}$ admits a lift $\sigma_E : S_E\rightarrow S_E$ as a conjugate morphism (see e.g. \cite{wangs}). Note that we do not require $\sigma_E$ to be an involution; it can be of order $2$ or $4$. On the other hand, if a $Spin$ structure is real, then there are exactly two lifts of $\overline{c_E}$, $\sigma_E$ and $-\sigma_E$.

\begin{definition}
A real $Spin$ structure $S_E\rightarrow F_E^+$ is oriented by the choice of a lift of $\overline{c_E}$.
\end{definition}

In fact, we consider oriented real $Spin$ structures only up to equivalence. Two oriented real $Spin$ structure $(S_E,\sigma_E)\rightarrow (F_E^+,\overline{c_E})$ and $(S'_E,\sigma'_E)\rightarrow (F_E^+,\overline{c_E})$ on $E$ are equivalent if there exists an isomorphism $f : S_E\rightarrow S'_E$ which lifts the identity on $F_E^+$ and satisfying $f\circ \sigma_E = \sigma'_E\circ f$.

Since the space of $c_E$-invariant metrics on $(E,c_E)$ is contractible, those definitions do not depend on the choice of $g$.

Suppose now that $J$ is a complex structure on $(E,c_E)$, i.e. $J$ is an endomorphism of $E$ which squares to minus the identity and satisfying $c_E\circ J = - J\circ c_E$. Then the fixed point set $\R E$ of $c_E$ is a real vector bundle of rank $r$ over the fixed point set $\R X$ of $c_X$ when it is not empty. Let us recall the following result (see e.g. \cite{wangs} Theorem 1).

\begin{prop}\label{orientspin}
  Let $(E,c_E,J)$ be a complex vector bundle on $(X,c_X)$ equipped with a real structure. Suppose that the real part of $X$ is non-empty and that $(E,c_E)$ admits a real $Spin$ structure. Then the bundle $\R E$ is orientable. Moreover, an orientation for the real $Spin$ structure on $(E,c_E)$ naturally induces an orientation on the real part of $(E,c_E)$. 

Finally, if $J'$ is a complex structure on $(E,c_E)$ which is homotopic to $J$ then the same orientation for the real $Spin$ structure give the same orientation of $\R E$ for $J$ and $J'$. 
\end{prop}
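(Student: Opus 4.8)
The plan is to work entirely over the real part $\R X$ and to show that the lift $\sigma_E$ forces a reduction of structure group which is the same thing as an orientation of $\R E$. First I would analyse the frame bundle. Over $\R X$ the automorphism $c_E$ splits $E|_{\R X}$ into its $(\pm1)$-eigenbundles, and since $c_E\circ J=-J\circ c_E$ the $(-1)$-eigenbundle is exactly $J(\R E)$, so that $E|_{\R X}\cong\R E\otimes_{\R}\C$. With respect to a $c_E$-invariant metric one checks that a positively oriented orthonormal frame $(e_1,\dots,e_{2r})$ of $E$ over a point of $\R X$ is fixed by $\overline{c_E}$ precisely when it has the form $(v_1,Jw_1,\dots,v_r,Jw_r)$ with $(v_i)$ and $(w_i)$ orthonormal frames of $\R E$ inducing the same orientation of $\R E$. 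Hence $P:=(F_E^+)^{\overline{c_E}}|_{\R X}$ is a principal $G$-bundle with $G=S(O(r)\times O(r))$, embedded in $SO(2r)$ block-diagonally with respect to the two eigenspaces of the matrix $T$, and there is a $G$-equivariant surjection $P\to F_{\R E}$, $(v_1,Jw_1,\dots)\mapsto(v_1,\dots,v_r)$, covering the first projection $G\to O(r)$. Since $G/G^0\cong\Z/2$ with $G^0=SO(r)\times SO(r)$, and the first projection sends $G^0$ onto $SO(r)$, the bundle $\R E$ is orientable if and only if $P$ admits a reduction to $G^0$, and such reductions correspond bijectively to orientations of $\R E$.

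Next I would bring in the real $Spin$ structure. Pulling $S_E$ back along $P\hookrightarrow F_E^+$ gives a principal $\widetilde G$-bundle $\widetilde P\to P$, with $\widetilde G\subset Spin(2r)$ the two-fold cover of $G$; since $\overline{c_E}$ restricts to $\id_P$, the lift $\sigma_E$ restricts to a conjugate bundle automorphism $\widetilde\sigma:=\sigma_E|_{\widetilde P}$ covering $\id_P$, with $\widetilde\sigma^2=\id$. The crucial algebraic point is that the map $\epsilon\colon G\to\Z/2$ defined by $\overline s=(-1)^{\epsilon(M)}s$ for a lift $s\in\widetilde G$ of $M\in G$ — well defined because $-1$ is central and fixed by the conjugation, and a homomorphism — is the \emph{nontrivial} character, i.e.\ $\ker\epsilon=G^0$. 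Indeed $\mathrm{Hom}_{\mathrm{cts}}(G,\Z/2)\cong\Z/2$ (a continuous homomorphism kills the connected subgroup $G^0$), so it is enough to evaluate $\epsilon$ on one element of $G\setminus G^0$: taking $M$ to be the rotation by $\pi$ in the $e_1e_2$-plane (which lies in $G\setminus G^0$ and whose lift to $Spin(2r)$ is the Clifford element $e_1e_2$), the conjugation is implemented by a Clifford-algebra automorphism extending $\pm T$ on $\R^{2r}$, hence sends $e_1e_2$ to $(\pm Te_1)(\pm Te_2)=-e_1e_2$, so $\epsilon(M)=1$. Granting this, one shows $\widetilde P^{\widetilde\sigma}\neq\emptyset$: for any $\widetilde u_0\in\widetilde P$ we have $\widetilde\sigma(\widetilde u_0)=\widetilde u_0 h$ with $h\in\{1,-1\}$, and $\widetilde u_0 s$ is fixed iff $\overline s=hs$, which has a solution ($s=1$ if $h=1$, and $s=e_1e_2$ if $h=-1$). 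Relative to a fixed point, $\widetilde P^{\widetilde\sigma}$ is a torsor under $\widetilde G^+:=\{s\in\widetilde G:\overline s=s\}$, which is a two-fold cover of $\ker\epsilon=G^0$; hence $\widetilde P^{\widetilde\sigma}/\{\pm1\}$ is a principal $G^0$-bundle reducing $P$, and pushing it forward along $G^0\to SO(r)$ yields an $SO(r)$-reduction of $F_{\R E}$, i.e.\ an orientation of $\R E$. In particular $\R E$ is orientable.

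For the remaining two assertions: replacing $\sigma_E$ by $-\sigma_E$ turns $h$ into $-h$, which exchanges the two $G^0$-orbits in each fibre of $P$ and therefore flips the resulting orientation of $\R E$, so an orientation of the real $Spin$ structure determines a well-defined orientation of $\R E$ and the two choices give the two orientations. Finally, the bundle $\R E$, the splitting $E|_{\R X}=\R E\oplus J(\R E)$ and the map $P\to F_{\R E}$ above depend on $J$ only through the recognition of $J(\R E)$ as the $(-1)$-eigenbundle of $c_E$, which holds for every compatible $J$, while the orientation of $E$ entering $F_E^+$ is a homotopy invariant of $J$; so along a homotopy $J_t$ the construction produces a continuous family of orientations of the fixed bundle $\R E$, which must be constant.

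The main obstacle is the algebraic input of the second paragraph, namely identifying $\epsilon$ with the nontrivial character of $G$. This requires pinning down exactly the conjugation induced on $Spin(2r)$ by the real structure — including the harmless overall sign ambiguity and the case $r$ odd, where $T\notin SO(2r)$ — and getting the Clifford-algebra sign bookkeeping right; everything else in the argument is formal once this is settled.
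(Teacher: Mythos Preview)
Your argument is correct and takes a genuinely different, self-contained route from the paper. The paper's own proof is essentially a citation: it defers the orientability of $\R E$ and the induced orientation to \cite{wangs}, and for the homotopy statement it invokes an automorphism of $(E,c_E)$ homotopic to the identity intertwining $J$ and $J'$, which therefore preserves both the oriented real $Spin$ structure and the resulting orientation of $\R E$. You instead work directly over $\R X$: identifying $(F_E^+)^{\overline{c_E}}$ as a principal $S(O(r)\times O(r))$-bundle and showing, via the Clifford computation that the character $\epsilon$ is the nontrivial one, that the fixed set of $\sigma_E$ in $\widetilde P$ cuts out a reduction to $SO(r)\times SO(r)$. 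This makes transparent \emph{why} a choice of lift $\sigma_E$ is the same datum as an orientation of $\R E$, which is exactly the kind of explicit control the paper later needs when tracking how automorphisms act on these orientations. Your homotopy argument is in fact slightly sharper than the paper's, since once the orientation of $E$ is fixed your construction over $\R X$ visibly does not use $J$ at all. One small point you assert without comment is $\widetilde\sigma^2=\id$; this is justified by your own formula $\widetilde\sigma^2(\widetilde u_0)=\widetilde u_0\, h\,\overline h=\widetilde u_0$ (since $\overline{\pm1}=\pm1$), and incidentally shows that when $\R X\neq\emptyset$ the lift $\sigma_E$ is automatically an involution, so the order-$4$ possibility mentioned in the paper only arises in the empty-real-locus case.
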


\begin{proof}
  The first part of the Proposition is proved in \cite{wangs}. The second part follows from the fact there exists an isomorphism between $(E,c_E,J)$ and $(E,c_E,J')$ which is homotopic to the identity as an automorphism of $(E,c_E)$.
\end{proof}

The existence of a real $Spin$ structure on a vector bundle $(E,c_E)$ is not an easy problem in general. The main case where we have the existence of such a structure is given by the following Lemma.

\begin{lemma}\label{existspin}
Let $(E,c_E)$ be an oriented real vector bundle of even rank equipped with a real structure over $(X,c_X)$. If $H^1(X,\Z/2\Z) = 0$ and $w_2(E) = 0$ then $(E,c_E)$ admits a real $Spin$ structure, which is unique. In particular, $(E,c_E)$ has exactly two oriented real $Spin$ structures.
\end{lemma}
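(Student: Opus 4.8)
The plan is to deduce the statement from the classical classification of $Spin$ structures on oriented vector bundles, using crucially that $\overline{c_E}$ is a \emph{conjugate} automorphism of $F_E^+$. Recall first that, $F_E^+$ being an $SO(2r)$-principal bundle over $X$, it admits a $Spin$ structure if and only if $w_2(E)=0$, and that, when non-empty, the set of equivalence classes of $Spin$ structures on $F_E^+$ is a torsor over $H^1(X,\Z/2\Z)$. Under our hypotheses $w_2(E)=0$ and $H^1(X,\Z/2\Z)=0$, there is therefore exactly one $Spin$ structure $p:S_E\to F_E^+$ up to equivalence. This already settles uniqueness; the remaining points are to show that $S_E$ is real and that it carries exactly two (inequivalent) orientations.

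To prove that $S_E$ is real, I would build its \emph{conjugate} $Spin$ structure and appeal to uniqueness. Fix an automorphism $\nu$ of $Spin(2r)$ lifting the conjugation $M\mapsto TMT$ of $SO(2r)$; such a $\nu$ exists, coming from conjugation by a lift of $T$ inside $Pin(2r)$, irrespective of the parity of $r$. Composing $p$ with $\overline{c_E}$, re-twisting the $Spin(2r)$-action by $\nu$, and using that $c_X$ is a diffeomorphism to again view the result as a principal bundle over $X$, one obtains a $Spin(2r)$-principal bundle $S_E^{\mathrm{c}}\to F_E^+$ which is again a $Spin$ structure on $E$. By uniqueness there is an isomorphism of $Spin$ structures $\Phi:S_E^{\mathrm{c}}\to S_E$ over $F_E^+$; tracing through the definitions, $\Phi$ is exactly a map $\sigma_E:S_E\to S_E$ with $p\circ\sigma_E=\overline{c_E}\circ p$ and $\sigma_E(s\cdot g)=\sigma_E(s)\cdot\nu(g)$, i.e. a conjugate-morphism lift of $\overline{c_E}$. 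Hence $S_E\to F_E^+$ is real.

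For the last point, recall from the discussion preceding the statement that the lifts of $\overline{c_E}$ are precisely $\sigma_E$ and $-\sigma_E$, so there are at most two orientations; it remains to see they are inequivalent. The automorphisms of $S_E$ covering $\id_{F_E^+}$ form $H^0(X,\Z/2\Z)=\Z/2\Z$ since $X$ is connected, the nontrivial one being multiplication $\epsilon$ by the central element $-1\in Spin(2r)$. Since $\nu(-1)=-1$ (as $\nu$ maps the kernel of $Spin(2r)\to SO(2r)$ to itself and fixes $1$), one computes $\epsilon\circ\sigma_E\circ\epsilon^{-1}=\sigma_E$; thus no automorphism of $S_E$ conjugates $\sigma_E$ into $-\sigma_E$, so $(S_E,\sigma_E)$ and $(S_E,-\sigma_E)$ are not equivalent. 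Therefore $(E,c_E)$ carries exactly two oriented real $Spin$ structures.

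I expect the main obstacle to be the careful construction of $S_E^{\mathrm{c}}$ in the second paragraph, and in particular checking that the isomorphism $\Phi$ furnished by uniqueness really yields a \emph{conjugate} morphism (one satisfying $\sigma_E(s\cdot g)=\sigma_E(s)\cdot\nu(g)$) rather than merely a principal-bundle map: this requires keeping track both of the twisted $SO(2r)$- and $Spin(2r)$-equivariance and of the fact that $\overline{c_E}$ covers $c_X\neq\id_X$. Everything else, including the commutation $\epsilon\circ\sigma_E\circ\epsilon^{-1}=\sigma_E$, is a short and formal verification. I would also keep in mind the low-rank case $2r=2$, where $Spin(2)\to SO(2)$ is the squaring map $U(1)\to U(1)$, to make sure the torsor statement still applies; it does, since the square roots of the associated line bundle form a torsor over $H^1(X,\Z/2\Z)$.
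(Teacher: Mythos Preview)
Your proof is correct and follows essentially the same approach as the paper: use $w_2(E)=0$ to get existence of a $Spin$ structure and $H^1(X,\Z/2\Z)=0$ to get uniqueness, then observe that the conjugate $Spin$ structure must coincide with the original, producing the lift $\sigma_E$. The paper's proof is a two-sentence sketch that simply asserts ``there is only one $Spin$ structure on $E$ and it must be real,'' leaving both the construction of the conjugate structure and the inequivalence of the two orientations implicit; your version spells these out carefully, which is a genuine improvement in rigor rather than a different argument.
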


\begin{proof}
  The condition $w_2(E)= 0$ guarantees the existence of a $Spin$ structure on $E$. The set of all $Spin$ structures on $E$ is then an affine space over $H^1(X,\Z/2\Z)$ which is trivial. Thus there is only one $Spin$ structure on $E$ and it must be real.
\end{proof}

\subsection{Main result}\label{mainsect}

\begin{definition}
  Let $(X,\Omega,c_X)$ be a smooth closed manifold of dimension $4$ equipped with a smooth family $\Omega = (\omega_t)_{t\in S^1}$ of symplectic forms and with an involutive diffeomorphism $c_X$ such that for all $t\in S^1$, $c_X^* \omega_t = -\omega_t$. We say $(X,\Omega,c_X)$ is a one-parameter family of real $Spin$ symplectic $K3$ surfaces if $c_1(X,\Omega)=0$ and if $(TX, dc_X)$ admits a real $Spin$ structure.

If $\Omega = \omega$ is constant, we say $(X,\omega,c_X)$ is a real $Spin$ symplectic $K3$ surface.
\end{definition}

 \begin{remark}\label{remktrois}
Let $(X,\omega,c_X)$ be a symplectic manifold of dimension $4$ equipped with an anti-symplectic involution such that $c_1(X,\omega) = 0$. Then Lemma \ref{existspin} implies that if $H^1(X,\Z/2\Z)=0$ there exist exactly two oriented real $Spin$ structures on $(TX,dc_X)$. In this case, Bauer proved in \cite{bauer}, Corollary 1.2 and Remark 1.3, that $X$ is homeomorphic to a $K3$ surface. It is not known if it is diffeomorphic to it but there is no known example of a simply-connected non-Kähler symplectic $4$-fold with vanishing first Chern class.
 \end{remark}

\begin{theo}\label{first}
Let $(X,\Omega,c_X)$ be a one-parameter family of real $Spin$ symplectic $K3$ surfaces. Then for all $d\in H_2(X,\Z)$ such that $(c_X)_* d = -d$, the map $\pi : \rmdo\rightarrow \rjo$ is orientable. Moreover, the choice of an oriented real $Spin$ structure on $(TX,dc_X)$ induces an orientation of $\pi$. The opposite oriented real $Spin$ structure induces the opposite orientation on $\pi$.
\end{theo}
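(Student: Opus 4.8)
The idea is to realize the determinant line bundle $\ddet(\pi)$ as a tensor product of determinant lines of Cauchy–Riemann type operators associated to a universal curve, and then to orient each factor using the oriented real $Spin$ structure via Proposition \ref{orientspin}. The plan is as follows. First I would describe the tangent space $T_{[u,j,J]}\rmdo$ and the differential $d\pi$ explicitly. A point of $\rmdo$ lifts to a triple $(u,j,J)$ with an involutive orientation-reversing $c_S$ on $S$ satisfying $u\circ c_S = c_X\circ u$; linearizing the $\DB_J$-equation and quotienting by the reparametrization action of $Diff^+(S)$ one obtains, in the standard way (as in \cite{wel1}, \cite{MDS}), that $\ker(d_{[u,j,J]}\pi)$ and $\coker(d_{[u,j,J]}\pi)$ are the kernel and cokernel of a real linear Cauchy–Riemann operator
\[
D_u : L^{k,p}(S,u^*TX)^{c_S} \oplus (\text{a finite-dimensional deformation space of } j) \longrightarrow L^{k-1,p}(S,\Lambda^{0,1}\otimes u^*TX)^{c_S},
\]
so that $\ddet(\pi)$ is canonically the determinant line of this family of real Cauchy–Riemann operators over $\rmdo$. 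The extra finite-dimensional summand coming from deformations of $j$ contributes a canonically trivial (oriented) factor, so orienting $\ddet(\pi)$ reduces to orienting the family $\ddet(D_u^{c_S})$ of real Cauchy–Riemann operators on $(u^*TX, u^*dc_X)$ over the real rational curve $(S,c_S)$.

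Next I would invoke the standard doubling/gluing description of the orientation of the determinant line of a real Cauchy–Riemann operator over a disk or sphere with real structure (see \cite{zinger}, \cite{wang}, \cite{wangs}): over a nodal degeneration or via a homotopy through the space of real Cauchy–Riemann operators, $\ddet(D_u^{c_S})$ is isomorphic to $\ddet$ of a \emph{fixed} model operator tensored with $\Lambda^{\max}_\R$ of the real part $\R(u^*TX)|_{\R S}$ of the bundle restricted to the fixed locus $\R S = \mathrm{Fix}(c_S)$. Since $S$ is a sphere and $c_S$ is orientation-reversing, $\R S$ is a circle (when non-empty), and $u|_{\R S}$ maps it into $\R X$; thus $\R(u^*TX)|_{\R S} \cong (u|_{\R S})^* \R TX$. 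Now $(TX, dc_X)$ carries an oriented real $Spin$ structure, and $c_1(X,\Omega) = 0$ means $w_2(TX)$ vanishes on the relevant $2$-cycles; by Proposition \ref{orientspin} the bundle $\R TX$ over $\R X$ is oriented by the chosen lift $\sigma$, hence $(u|_{\R S})^*\R TX$ is an oriented line-or-rank-$2$ bundle over the circle $\R S$ with a canonical orientation of $\Lambda^{\max}_\R$, and this orientation varies continuously with $[u,j,J]\in\rmdo$. The model factor is oriented once and for all. When $\R S = \emptyset$ (the real curve has empty real locus), $(S,c_S)$ is the quotient construction from a disk-double and the determinant line is oriented by the complex-linear structure of the corresponding $\DB$-operator on the disk together with the $Spin$ data at the boundary; I would handle this case by the same doubling argument, noting that the two topological types of $(S,c_S)$ sit in the same connected $\rmdo$ only across walls that do not affect orientability. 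Assembling these, one gets a continuous, nowhere-vanishing section of $\ddet(\pi)$, i.e. an orientation; replacing $\sigma$ by $-\sigma$ flips the induced orientation of $\R TX$ by Proposition \ref{orientspin}, and since $\R S$ is a single circle this flips the orientation of $\Lambda^{\max}_\R (u|_{\R S})^*\R TX$ (an odd number of sign changes), hence reverses the orientation of $\pi$.

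The main obstacle is the globalization step: showing that the pointwise isomorphism $\ddet(\pi)_{[u,j,J]} \cong (\text{model line}) \otimes \Lambda^{\max}_\R (u|_{\R S})^*\R TX$ can be chosen continuously over \emph{all} of $\rmdo$, including the components where $\R S = \emptyset$ and near loci where the two types of real structure interact, and that it does not depend on auxiliary choices (the complex structure $j$ used in the doubling, the metric, the trivialization of the model operator). This requires the comparison/excision results for determinant lines of real Cauchy–Riemann operators under degeneration to nodal curves and under change of the real structure on the domain, together with the last sentence of Proposition \ref{orientspin} guaranteeing independence of the homotopy class of the complex structure on $u^*TX$. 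I expect this to occupy the bulk of the argument, and the sign-reversal claim to follow formally once the orientation is constructed.
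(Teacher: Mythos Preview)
Your overall architecture is right: reduce $\ddet(\pi)$ to the determinant of a family of real Cauchy--Riemann operators on $(u^*TX,dc_X)$ over $(S,c_S)$, and then orient that family using the oriented real $Spin$ structure. This is exactly the decomposition the paper uses. But the key step in your argument has a genuine gap.

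You claim that $\ddet(D_u^{c_S})$ is, up to a fixed model line, controlled by $\Lambda^{\max}_{\R}$ of the real bundle $(u|_{\R S})^*\R TX$, and that an \emph{orientation} of $\R TX$ (coming from Proposition \ref{orientspin}) suffices to orient it. This is not enough. For a rank-two real pair $(\TC^2,c_{\TC})$ over $(S,c_S)$ with $\R S\neq\emptyset$, the group $\pi_0(\raut)$ contains, besides an element reversing the orientation of $\TR^2$, an element $t$ that \emph{preserves} the orientation of $\TR^2$ yet acts nontrivially on the orientations of $\Dett$ (this is Lemma \ref{recall}). What $t$ does change is the $Pin^\pm$/$Spin$ structure on $\TR^2$. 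Hence to pin down an orientation of $\ddet(D_u^{c_S})$ you must fix not only an orientation of $(u|_{\R S})^*\R TX$ but also a $Spin$ structure on it, and you must do so coherently as $u$ varies. The oriented real $Spin$ structure on $(TX,dc_X)$ gives you the orientation via Proposition \ref{orientspin}, but it does \emph{not} by itself give you a $Spin$ structure on $T\R X$ restricted along the varying loops $u(\R S)\subset\R X$.

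The paper fills this gap with an extra construction (Lemmas \ref{lifts} and \ref{pintriv}): for each class $\gamma\in H_1(\R X,\Z/2\Z)$ one chooses a $Spin$ structure $\zeta_\gamma$ on $\R X$ whose restriction to any immersed loop $a$ in the class $\gamma$ is determined by the tangential framing of $a$ and the parity of its self-intersection number. The point is that the pullback $a^*\zeta_\gamma$ is independent of the particular $\zeta_\gamma$ chosen, so this gives a canonical $Spin$ structure on $(u|_{\R S})^*T\R X$ varying continuously with $u$. Only after fixing both the orientation \emph{and} this $Spin$ structure can one choose the trivialization $f:\TC^2\to u^*TX$ up to the identity component of $\raut$, and thereby transport the fixed orientation of $\Dett$ unambiguously.

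Two smaller points. First, your treatment of the deformations-of-$j$ factor (``canonically trivial'') is too quick; the paper orients $\ddet(\DB_{j_0})$ by an explicit choice of basis of $\ker(\DB_i)$ (Proposition \ref{dbj}), and this choice matters later for compatibility with the moduli-of-curves orientation. Second, your handling of the case $\R S=\emptyset$ via a ``disk-double'' is not the right picture here: $(S,c_S)$ with empty real locus is the sphere with the antipodal involution, and the paper treats it directly by noting (Lemma \ref{recall}) that in this case $\pi_0(\raut)$ is generated by a single element $r$ that simultaneously flips the oriented real $Spin$ structure and the orientation of $\Dett$, so the oriented real $Spin$ structure alone does suffice there. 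Your sign-reversal argument (``$\R S$ is a single circle, so an odd number of sign changes'') therefore does not cover the empty-real-part component; the paper's argument for sign reversal goes through Lemma \ref{recall} uniformly in both cases.
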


 \begin{lemma}\label{finite}
  Let $(X,\Omega,c_X)$ be a one-parameter family of real $Spin$ symplectic $K3$ surfaces and $d\in H_2(X,\Z)$ such that $(c_X)_* d = -d$. Suppose that $d$ is primitive, i.e. that if $d = ke$ with $e\in H_2(X,\Z)$ and $k\in \Z$, then $k = \pm 1$. Then, a generic smooth map $\gamma : S^1\rightarrow \rjo$ is transverse to $\pi$ and the fiber product $\rmdo\fp{\pi}{\gamma} S^1$ is finite. Moreover, we can assume that all the real rational curves which are $\gamma(t)$-holomorphic for some $t\in S^1$ are irreducible.
 \end{lemma}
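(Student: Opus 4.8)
The plan is to prove Lemma \ref{finite} by a standard Sard--Smale transversality argument, combined with a dimension count that exploits the primitivity of $d$. I would organize the proof as follows.

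\textbf{Step 1: transversality of a generic loop.} Since $\pi : \rmdo \to \rjo$ is a smooth Fredholm map of index $-1$ (by the last Proposition of the excerpt, since $c_1(X,\Omega) = 0$), I would first recall that the Sard--Smale theorem guarantees that a generic smooth map $\gamma : S^1 \to \rjo$ is transverse to $\pi$. More precisely, one considers the universal space of smooth maps $S^1 \to \rjo$ (or a suitable Banach manifold of such maps of appropriate regularity), forms the fiber product with $\pi$, and checks that the projection to the parameter space is Fredholm; the regular values then form a residual set. For a regular value $\gamma$, the fiber product $\rmdo\fp{\pi}{\gamma}S^1$ is a smooth manifold of dimension $\ind(\pi) + \dim S^1 = -1 + 1 = 0$, by Proposition \ref{fporient} (applied with $L = S^1$, which has empty boundary). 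So it is a $0$-dimensional manifold, hence a discrete set of points.

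\textbf{Step 2: compactness, i.e. finiteness.} The substantive point is that this discrete set is actually \emph{finite}. Here I would invoke Gromov compactness for the family of pseudo-holomorphic curves parameterized by $t \in S^1$: any sequence of elements $(C_n, t_n) \in \rmdo\fp{\pi}{\gamma}S^1$ has, after passing to a subsequence, $t_n \to t_\infty$, and the curves $C_n$ (which are real rational, irreducible, in the fixed class $d$, and $\gamma(t_n)$-holomorphic) Gromov-converge to a stable map. A priori the limit could be a nodal or multiply-covered curve. This is where primitivity of $d$ enters: since $d$ is primitive, a limit stable map cannot have a multiply-covered component in a class that is a nontrivial multiple of a primitive class, and a reducible limit would have components in classes $d_1 + d_2 = d$ with both $d_i \neq 0$; one then argues, using the index/dimension count for the strata of reducible and multiply-covered curves together with the genericity of $\gamma$, that such degenerate configurations do not occur in a $1$-parameter family (they are of codimension $\geq 1$ in the relevant sense, and $\gamma$ misses them). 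Hence the Gromov limit is again an irreducible real rational $\gamma(t_\infty)$-holomorphic curve in the class $d$, i.e. an element of $\rmdo\fp{\pi}{\gamma}S^1$, and moreover it is isolated, so the sequence is eventually constant. This shows the set is compact and discrete, hence finite. The same argument (ruling out reducible limits generically) gives the last sentence of the lemma: we may assume all the $\gamma(t)$-holomorphic real rational curves in the class $d$ are irreducible.

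\textbf{Main obstacle.} The routine part is Step 1 (Sard--Smale is by now standard, as in \cite{MDS}). The delicate part is Step 2, specifically making precise that the bad strata (reducible curves, multiple covers, and curves with nontrivial automorphisms) are avoided by a generic $1$-parameter family. For multiple covers this is exactly where primitivity is used: if $d$ were divisible, multiply-covered limits would be unavoidable and the count could fail to be finite (or finite but requiring rational weights), which is precisely why the theorem in the introduction is stated only for primitive $d$. For reducible curves one needs that the locus of $J$ admitting a reducible real rational configuration in the class $d$ has codimension at least $2$ in $\rjo$ — or rather, that the corresponding moduli space of reducible configurations, mapped to $\rjo$, has image of the right codimension — so that a generic loop $\gamma$ avoids it. Carrying this out rigorously requires the dimension formulas for the moduli spaces of (possibly reducible, possibly real or conjugate-pair) stable maps; I would cite the relevant estimates from \cite{wel1} and \cite{MDS} rather than reprove them. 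One subtlety worth flagging is the treatment of real curves whose normalization is a pair of complex-conjugate spheres versus a single real sphere; both types can occur as components of a reducible limit, and each must be checked to be generically excluded.
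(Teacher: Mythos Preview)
Your proposal is correct and follows essentially the same strategy as the paper: Sard--Smale for transversality, then Gromov compactness with primitivity ruling out multiple covers and a codimension count ruling out reducible limits. The paper carries out your ``codimension $\geq 2$'' step concretely by building, for each relevant pair of classes, a fiber-product moduli space whose projection to $\rjo$ is Fredholm of index $-2$: one construction for a class $e$ with $-(c_X)_*e \neq e$ paired with its conjugate $e'$ (your ``conjugate-pair'' components), and another for two distinct anti-invariant classes $d_1,d_2$ (your ``single real sphere'' components), both using the transversality result of \cite{wel1}, Proposition 2.9; a generic loop then misses the images of all these index $-2$ maps, which is exactly the mechanism you sketched.
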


 \begin{proof}
First take a class $e\in H_2(X,\Z)$ such that $e' = -(c_X)_* e \neq e$. Then the moduli spaces $\meo$ and $\mepo$ of simple rational pseudo-holomorphic curves in degree $e$ and $e'$ are Banach manifolds of class $\Cl^{l-k}$ which come with Fredholm maps $\pi_e : \meo\rightarrow \jo$ and $\pi_{e'} : \mepo\rightarrow \jo$ of index $-2$ (see \cite{shev}, Corollary 2.2.3). Moreover, $\pi_e$ and $\pi_{e'}$ are transverse to each other (see \cite{wel1}, Proposition 2.9). Thus, the fiber product $\meeo = \meo\fp{\pi_{e}}{\pi_{e'}} \mepo$ is a Banach manifold of class $\Cl^{l-k}$ equipped with a Fredholm map $\pi_{e,e'} : \meeo \rightarrow \jo$ of index $-4$. The quotient $Diff(S)/Diff^+(S)$ then acts on $\meeo$ by reparameterization, and the fixed points set $\rmeeo$ of this action is a Banach manifold of class $\Cl^{l-k}$ with a Fredholm map $\pi_{e,e'}: \rmeeo \rightarrow \rjo$ of index $-2$. In particular, by Sard-Smale theorem, the image of a generic $\gamma : S^1 \rightarrow \rjo$ does not intersect $\im(\pi_{e,e'})$.

Secondly, take $d_1, d_2\in H_2(X,\Z)$ two different classes with $(c_X)_*d_i = -d_i$ for $i=1,2$. Then the projections $\pi_1 : \R\M^{d_1}(X,\Omega)\rightarrow \rjo$ and $\pi_2:\R\M^{d_2}(X,\Omega)\rightarrow \rjo$ are of index $-1$ and transverse to each other (see Proposition 2.9 in \cite{wel1}). Thus, the fiber product $\rmddo = \R\M^{d_1}(X,\Omega) \fp{\pi_{1}}{\pi_{2}} \R\M^{d_2}(X,\Omega)$ comes with a Fredholm map $\pi_{1,2} : \rmddo \rightarrow \rjo$ of index $-2$. In particular, by Sard-Smale theorem, the image of a generic $\gamma : S^1 \rightarrow \rjo$ does not intersect $\im(\pi_{1,2})$.

   On the other hand, also by Sard-Smale theorem, a generic $\gamma$ is transverse to $\pi$. 

Now take a $\gamma$ which is transverse to $\pi$ and such that for any $e\in H_2(X,\Z)$ with $e' = -(c_X)_* e \neq e$ we have $\im(\pi_{e,e'})\cap \im(\gamma) = \emptyset$ and for any $d_1,d_2\in H_2(X,\Z)$ with $d_1\neq d_2$ and $(c_X)_*d_i = -d_i$ for $i=1,2$, we have $\im(\gamma)\cap \im(\pi_{1,2}) = \emptyset$. Such a $\gamma$ is again generic. Moreover, the fiber product $\rmdo\fp{\pi}{\gamma} S^1$ is a $0$-dimensional manifold.

Assume by contradiction that it is infinite. Then one can find an injective sequence in it, and by Gromov's compactness theorem, there exist a subsequence converging to a stable pseudo-holomorphic curve. There are then three cases :
   \begin{itemize}
   \item the limit curve is irreducible and simple; this is not possible as $\rmdo\fp{\pi}{\gamma} S^1$ would then have an accumulation point.
     \item the limit curve is irreducible but not simple; this is ruled out by the assumption on $d$.
       \item the limit curve is reducible; this is not possible because of the conditions we have on $\gamma$.
   \end{itemize}

Thus $\rmdo\fp{\pi}{\gamma} S^1$ is finite.
 \end{proof}

On the other hand, under the same assumptions as in Lemma \ref{finite}, as mentioned in the Remark \ref{imprem}, the Theorem \ref{first} and the Proposition \ref{fporient} allow us to define for each element $\Cl \in \rmdo \fp{\pi}{\gamma} S^1$ and each oriented real $Spin$ structure $\ssp$ on $(TX, dc_X)$, a sign $\varepsilon_{\ssp}(\Cl)$. Then define :
\[
\chi^{\ssp}_d(X,\Omega,c_X;\gamma) = \sum_{\Cl\in \rmdo \fp{\pi}{\gamma} S^1}\varepsilon_{\ssp}(\Cl).
\]
However, since $\pi$ is not proper in general, it is not clear how $\chi^{\ssp}_d(X,\Omega,c_X;\gamma)$ vary along a homotopy of $\gamma$. This is answered by the following.

\begin{theo}\label{second}
  Let $(X,\Omega,c_X)$ be a one-parameter family of real $Spin$ symplectic $K3$ surfaces. Choose an oriented real $Spin$ structure $\ssp$ and let $d\in H_2(X,\Z)$ be a primitive class such that $(c_X)_* d = -d$. Then, for any generic $\gamma : S^1\rightarrow \rjo$, the integer $\chi^{\ssp}_d(X,\Omega,c_X;\gamma)$ depends only on the homotopy class of $\gamma$.
\end{theo}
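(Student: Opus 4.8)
The plan is to run the standard one-dimensional cobordism argument, the genuinely new ingredient being the control of the non-compact ends produced by the failure of $\pi$ to be proper. Fix two generic maps $\gamma_0,\gamma_1:S^1\to\rjo$ as in Lemma \ref{finite} lying in the same homotopy class, choose a smooth homotopy $\Gamma:S^1\times[0,1]\to\rjo$ between them, and, by the Sard--Smale theorem, make $\Gamma$ transverse to $\pi$ as well as to the auxiliary Fredholm maps $\pi_{e,e'}:\rmeeo\to\rjo$ and $\pi_{1,2}:\rmddo\to\rjo$ of Lemma \ref{finite} and to the analogous maps attached to configurations with three or more components. Since $\pi$ has index $-1$ and $S^1\times[0,1]$ is two--dimensional, the fiber product $W_0:=\rmdo\fp{\pi}{\Gamma}(S^1\times[0,1])$ is a smooth $1$--manifold with $\partial W_0=\rmdo\fp{\pi}{\gamma_1}S^1\sqcup\rmdo\fp{\pi}{\gamma_0}S^1$, and by Theorem \ref{first} and Proposition \ref{fporient} it is oriented by $\ssp$, with boundary orientation encoded by $\pm\varepsilon_{\ssp}$ as in Remark \ref{imprem}.

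Next I would describe the ends of $W_0$ via Gromov compactness. A sequence leaving every compact subset of $W_0$ has a subsequence whose curves converge to a stable $\Gamma(s_\infty,t_\infty)$--holomorphic map which is neither irreducible simple (that would be an interior limit) nor a multiple cover (excluded because $d$ is primitive); for generic $\Gamma$ it is a two--component reducible curve of one of the two types occurring in Lemma \ref{finite}, namely two real rational curves in classes $d_1+d_2=d$, or a conjugate pair in classes $e$ and $e'=-(c_X)_*e\neq e$ with $e+e'=d$. As $\pi_{1,2}$ and $\pi_{e,e'}$ have index $-2$ while $\Gamma$ is a $2$--parameter family, these limits occur over only finitely many parameters; moreover adjunction on the $K3$ surface forces the two components to meet at a single point, necessarily fixed by $c_X$, so each limit is a real rational nodal curve with exactly one real node.

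I would then pass to the partial compactification $\rmred$ of $\rmdo$ obtained by adding these two--component reducible curves, together with a Fredholm map $\bar\pi:\rmred\to\rjo$ of index $-1$ extending $\pi$; by Gromov compactness together with primitivity of $d$, and since a generic $\Gamma$ avoids the higher--codimension strata, $\bar\pi$ is proper on the relevant part of $\rmred$. The essential local input is a real gluing theorem: near a reducible curve $C_1\cup C_2$ with its real node, every nearby irreducible real rational curve in class $d$ arises by smoothing the node along a real gluing parameter running over an interval through $0$; hence the reducible curve is an \emph{interior} point of $\rmred$, which therefore has no codimension--one boundary. It follows that, for generic $\Gamma$, $W:=\rmred\fp{\bar\pi}{\Gamma}(S^1\times[0,1])$ is a \emph{compact} $1$--manifold with $\partial W=\rmdo\fp{\pi}{\gamma_1}S^1\sqcup\rmdo\fp{\pi}{\gamma_0}S^1$, no reducible curves occurring over the generic $\gamma_i$ by Lemma \ref{finite}.

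The remaining step, which is also the main obstacle, is to show that the $Spin$ orientation of $\pi$ extends continuously over $\rmred$, i.e.\ that $\bar\pi$ is oriented. Over a reducible curve a Mayer--Vietoris / gluing exact sequence expresses $\ddet(\bar\pi)$ as a tensor product of the determinant lines of the linearised operators of the two components and of $\det(T_p\R X)$, where $p$ is the real node; each factor carries a natural orientation — those of the components' operators from $\ssp$ exactly as in Theorem \ref{first} (in the conjugate--pair case one component contributes a canonically complex--oriented factor), and $\det(T_p\R X)$ from the orientation of $\R X$ that $\ssp$ induces via Proposition \ref{orientspin}. Tracking these lines through the gluing parametrisation, one checks that the resulting orientation agrees with the one--sided limits of the orientation of $\pi$ on $\rmdo$. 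Granting this, $W$ is a compact oriented $1$--manifold, so the signed count of its boundary points vanishes (Remark \ref{imprem}), which gives $\chi^{\ssp}_d(X,\Omega,c_X;\gamma_1)=\chi^{\ssp}_d(X,\Omega,c_X;\gamma_0)$. The difficulty is concentrated in this orientation computation: for an \emph{arbitrary} orientation of $\pi$ the count would jump whenever $\Gamma$ crosses a reducible curve, and it is the oriented real $Spin$ structure — via the compatibility of the determinant--line decomposition above with the induced orientation of $\R X$ — that renders these crossings invisible.
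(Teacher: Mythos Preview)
Your cobordism framework and your identification of the two key issues (ends of $W_0$, orientation extension) are on target, but the structural claim about the reducible limits is where the argument breaks. You assert that ``adjunction on the $K3$ surface forces the two components to meet at a single point'' and conclude that each limit stable map has exactly two domain components joined at one real node. This is not justified: adjunction gives no constraint forcing $d_1\cdot d_2=1$, and more importantly, while a generic two--parameter family can be arranged so that the \emph{image} of every reducible limit has only two irreducible components, the \emph{domain} of the limit stable map may be a tree of many spheres, some of which multiply cover one of the image curves. The paper says this explicitly at the opening of \S\ref{parthick}: ``we cannot say much about the curve $\Cl$ except that if $\delta$ is generic enough it has only two components in its image. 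In particular, the curve $\Sigma$ could have many components, some of them on which $u_0$ could be multiple or constant.'' Consequently your partial compactification $\rmred$ does not capture the actual degenerations, the ``interior point via real gluing parameter'' picture is unavailable, and the Mayer--Vietoris splitting of $\ddet(\bar\pi)$ into two component factors plus $\det(T_p\R X)$ does not apply.

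The paper therefore does \emph{not} build a Banach--manifold compactification with an extended Fredholm map. Instead, around each reducible limit $\Cl$ it passes to $\P$-thickened curves: one adds auxiliary (complex--conjugate) marked points to stabilise the domain and a finite--dimensional perturbation space $\P\subset\pert$ making $\vD_{u_0}$ surjective, and then proves a gluing theorem (Theorem \ref{gluing}) giving a homeomorphism from an open set in $\R^{\dim\P+1}$ onto a neighbourhood $U_{\P}$ of $\Cl$ in $\crmdp$ that is compatible with the $Spin$ orientation of Proposition \ref{porient}. The ``black box'' contribution of $\Cl$ is then the signed boundary count of the zero set of a section of a trivial $\P$--bundle over a Euclidean ball, which a topological perturbation (Kirby--Siebenmann) turns into a compact oriented $1$--manifold, hence zero; the forgetful map and the automorphism group of $\Cl$ transfer this vanishing back to $\crmdod$. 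The orientation step you single out as ``the main obstacle'' is indeed the heart of the proof, but it is carried out in \S\ref{parlin}--\S\ref{orsection} via linear gluing over an \emph{arbitrary} tree of spheres (Proposition \ref{lingl}, Lemma \ref{trivgl}, Proposition \ref{orfiber}), not via the two--component decomposition you propose.
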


Since $\rjo$ is connected, one can reformulate Theorem \ref{second} by saying that $\gamma\mapsto \chi^{\ssp}_d(X,\Omega,c_X;\gamma)$ defines a morphism $\chi^{\ssp}_d(X,\Omega,c_X;.)$ from $H_1(\rjo,\Z)$ into $\Z$.

On the other hand, one can choose $\gamma_{\Omega} : S^1\rightarrow\rjo$ generic such that for all $t\in S^1$, $\gamma_{\Omega}(t)$ is tamed by $\omega_t$. Moreover, such a $\gamma_{\Omega}$ is unique up to homotopy; thus, the integer $\chi^{\ssp}_d(X,\Omega,c_X; \gamma_{\Omega})\in \Z$ does not depend on the choice of $\gamma_{\Omega}$. We will denote it by $\chi^{\ssp}_d(X,\Omega,c_X)$. One can then easily check that $\chi^{\ssp}_d(X,\Omega,c_X)$ is invariant under deformation of $(X,\Omega,c_X)$, i.e. if $\Omega' = (\omega'_t)_{t\in S^1}$ is a family of real symplectic structures on $(X,c_X)$ which is homotopic to $\Omega$ through families of real symplectic structures, then $\chi^{\ssp}_d(X,\Omega,c_X) = \chi^{\ssp}_d(X,\Omega',c_X)$.

Theorem \ref{first} is a refinement of general results appearing in \cite{article} and \cite{georzing}. However, we will give a simpler proof in \S \ref{prooffirst} for the sake of completeness and also because we will need the explicit description of the orientation of $\pi$ in order to prove Theorem \ref{second}. The proof of Theorem \ref{second} appears in \S \ref{proofsecond}.

\subsection{K3 surfaces}\label{k3par}

We now explain how to get enumerative invariants associated to the deformation class of a real projective $K3$ surface from the invariants defined by Theorems \ref{first} and \ref{second}.

Let $(X,I,c_X)$ be a real $K3$ surface, i.e $(X,I)$ is a simply-connected complex surface with trivial canonical bundle and $c_X$ is an anti-holomorphic involution on $X$. Then $X$ admits a Kähler form $\omega$ for which $c_X$ is anti-symplectic (see Theorem 14.5 in \cite{barth}). Denote by $g = \omega(.,I.)$ the Kähler metric associated to the Kähler form $\omega$. 

The canonical bundle admits a real structure given by the transpose $(d c_X)^t$ of $d c_X$ and we denote by $H^0(X,K_X)_{+1}$ the $+1$-eigenspace of the map induced by $(dc_X)^t$ on $H^0(X,K_X)$. Taking a non-zero form $v\in H^0(X,K_X)_{+1}$, there is a unique endomorphism $J_1$ of $TX$ such that $g(J_1 ., .) = \Re(v(.,.))$. Moreover there exists a unique $\lambda>0$ such that $J =\lambda J_1$ is an almost-complex structure. Replacing $v$ by a positive multiple, we obtain the same almost-complex structure, and taking $-v$ gives $-J$.

 Moreover, as in \cite{barth} Lemma 13.1, $J$ is integrable, satisfies $IJ = - JI$ and $d c_X\circ J = J\circ d c_X$, and $g$ is Kähler for $J$. This implies that all the elements of the sphere
\[
\Tw(\omega,I) = \{aI + b J + c IJ \in End(TX), a,b,c\in \R^3, a^2+b^2+c^2 = 1\}
\]
are complex structures on $X$ making $g$ Kähler, and the only elements making $c_X$ anti-holomorphic form the circle
\[
\R \Tw(\omega,I) = \{aI + bJ + cIJ\in \Tw(\omega,I), b= 0\}.
\]
Note that the sets $\Tw(\omega,I)$ and $\R \Tw(\omega,I)$ do not depend on the choice of $v\in H^0(X,K_X)_{+1}$, but once we choose an orientation of $H^0(X,K_X)_{+1}$, $\R \Tw(\omega,I)$ is oriented as the boundary of the hemisphere of $\Tw(\omega,I)$ containing $J$. The opposite orientation of $H^0(X,K_X)_{+1}$ gives the opposite orientation of $\R \Tw(\omega,I)$. 

 On the other hand, as we mentioned in Remark \ref{remktrois}, $(TX, d c_X)$ admits exactly two oriented real $Spin$ structures which we will denote by $\ssp$ and $\ssp'$. 

More precisely, there is a bijection between the $Spin$ structure on $TX$ and the square roots of the canonical bundle $K_X$ of $X$ (see \cite{atiyah}, Proposition 3.2). In our case, since $K_X$ is trivial, this $Spin$ structure is given by the trivial line bundle $\OO_X$. Taking an isomorphism $\alpha : \OO_X^2\rightarrow K_X$, there are exactly two real structures $c_1$ and $c_2$ on $\OO_X$ such that their squares are equal to $\alpha^* (dc_X)^t$. The choice of one of those two real structures orients the real $Spin$ structure. It is also equivalent to the choice of a $(d c_X)^t$-invariant non-vanishing holomorphic $2$-form on $X$ up to a positive constant.

Thus, we get two loops of complex structure $\gamma_{\omega,I}^{\ssp},\gamma_{\omega,I}^{\ssp'} : S^1\rightarrow \R\Tw(\omega,I)$ which are obtained one from the other by reparameterizing using an orientation reversing diffeomorphism of $S^1$. We also get two loops of Kähler forms $\Omega_{\omega,I}^{\ssp} = (g(\gamma_{\omega,I}^{\ssp}(t).,.))_{t\in S^1}$ and $\Omega_{\omega,I}^{\ssp'} = (g(\gamma_{\omega,I}^{\ssp'}(t).,.))_{t\in S^1}$. The triples $(X,\Omega_{\omega,I}^{\ssp},c_X)$ and $(X,\Omega_{\omega,I}^{\ssp'},c_X)$ are one-parameter families of real $Spin$ symplectic $K3$ surfaces. The discussion following Theorem \ref{second} gives us $\chi_d^{\ssp}(X,\Omega_{\omega,I}^{\ssp},c_X)$ and $\chi_d^{\ssp'}(X,\Omega_{\omega,I}^{\ssp'},c_X)$ for any primitive class $d\in H_2(X,\Z)$ such that $(c_X)_* d = -d$.

 Since the structures $\ssp$ and $\ssp'$ are opposite and $\Omega_{\omega,I}^{\ssp}$ and $\Omega_{\omega,I}^{\ssp'}$ are the same family of symplectic forms travelled with opposite orientations, we have equality $\chi_d^{\ssp}(X,\Omega_{\omega,I}^{\ssp},c_X) = \chi_d^{\ssp'}(X,\Omega_{\omega,I}^{\ssp'},c_X)$. Since the space of Kähler forms $\omega$ on $(X,I,c_X)$ is convex, the integer $\chi_d^{\ssp}(X,\Omega_{\omega,I}^{\ssp},c_X)$ does not depend on the choice of $\omega$. We define 
\[
\chi_d(X,I,c_X) = \chi_d^{\ssp}(X,\Omega_{\omega,I}^{\ssp},c_X) = \chi_d^{\ssp'}(X,\Omega_{\omega,I}^{\ssp'},c_X).
\]

Let us underline that this number is now an invariant of the deformation class of the real $K3$ surface which does not depend on the choice of oriented real $Spin$ structure. 

\begin{prop}\label{twistortr}
  Let $(X,I,c_X)$ a real $K3$ surface and $\omega$ a Kähler form which makes $c_X$ anti-symplectic. Take a primitive class $d\in H_2(X,\Z)$ such that $(c_X)_*d= -d$ and $d^2\geq -2$. Then there is exactly one element $K\in\R \Tw(\omega,I)$ such that there exists a $K$-holomorphic curve in the class $d$.

Moreover, if $(X,I,c_X)$ is generic enough, then the loop $\gamma_{\omega,I}^{\ssp} : S^1 \rightarrow \R\mathcal{J}^l_{\Omega_{\omega,I}^{\ssp}}$ is transverse to the map $\pi : \R\mathcal{M}^d(X,\Omega_{\omega,I}^{\ssp})\rightarrow \R\mathcal{J}^l_{\Omega_{\omega,I}^{\ssp}}$, and the fiber product $\R\mathcal{M}^d(X,\Omega_{\omega,I}^{\ssp})\fp{\pi}{\gamma_{\omega,I}^{\ssp}} S^1$ contains only irreducible curves.
\end{prop}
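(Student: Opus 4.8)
The plan is to reduce both assertions to the Hodge theory of the twistor family together with Riemann--Roch on $K3$ surfaces. For the first assertion, write $\hat d\in H^2(X,\Z)$ for the Poincar\'e dual of $d$; as an anti-holomorphic involution of a complex surface $c_X$ preserves the orientation and the cup product, so $c_X^*\hat d=-\hat d$. The K\"ahler forms $\omega_I=\omega$, $\omega_J=g(J\cdot,\cdot)$ and $\omega_K=g(IJ\cdot,\cdot)$ span a maximal positive definite subspace $\Pi\subset H^2(X,\R)$, and from $dc_X\circ J=J\circ dc_X$ and the fact that $c_X$ is a $g$-isometry one gets $c_X^*\omega_J=\omega_J$ (while $c_X^*\omega_I=-\omega_I$ and $c_X^*\omega_K=-\omega_K$); hence $\langle\hat d,\omega_J\rangle=0$, so $p:=\mathrm{pr}_\Pi(\hat d)$ lies in $\langle\omega_I,\omega_K\rangle$. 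Next I would check that $p\neq 0$: if $p=0$ then $\hat d\perp\Pi$, so $\hat d^2\leq -2$ by maximality of $\Pi$ and evenness of the lattice, forcing $d^2=-2$; but then $\hat d$ is of type $(1,1)$ for $I$, Riemann--Roch gives $h^0(\OO_X(D))+h^0(\OO_X(-D))\geq 2+\tfrac12 d^2=1$, so $\pm d$ is effective, which contradicts $\langle\hat d,\omega_I\rangle=0$ since a nonzero effective divisor has positive degree against the K\"ahler class. With $p\neq 0$, for $K=aI+c\,IJ\in\R\Tw(\omega,I)$ the class $\hat d$ is of type $(1,1)$ if and only if it is orthogonal to the period plane $\langle\omega_J,-c\omega_I+a\omega_K\rangle$, i.e. if and only if $(a,c)=\pm p/|p|$; writing $\pm K_0$ for the two corresponding complex structures, the sign of $\langle\hat d,[\omega_{K_0}]\rangle$ (positive for $(a,c)=p/|p|$) decides which of $d,-d$ is effective, and combined with Riemann--Roch ($h^2(\OO_X(D))=0$, hence $h^0(\OO_X(D))\geq 2+\tfrac12 d^2\geq 1$) this shows that $d$ carries a holomorphic curve for exactly one element $K_0$ of $\R\Tw(\omega,I)$.

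For the second assertion I would first reduce the fiber product. Since $\gamma_{\omega,I}^{\ssp}$ parametrises the circle $\R\Tw(\omega,I)$, there is a unique $t_0\in S^1$ with $\gamma_{\omega,I}^{\ssp}(t_0)=K_0$, and by the first part $\R\mathcal{M}^d(X,\Omega_{\omega,I}^{\ssp})\fp{\pi}{\gamma_{\omega,I}^{\ssp}}S^1$ is the set of $c_X$-invariant rational $K_0$-holomorphic curves in class $d$, all lying over $t_0$; this set has expected dimension $0$ and, for generic $(X,I,c_X)$, is finite. Choosing $(X,I,c_X)$ generically in its deformation class, I would arrange that $d$ admits no decomposition into a sum of two nonzero effective classes on $(X,K_0)$ (when $d^2>0$ this follows from $\operatorname{Pic}(X,K_0)=\Z\hat d$, using that $\hat d$ is primitive), so that every curve in the fiber product is irreducible and reduced; the same genericity lets me assume that each of these real rational curves is immersed with only nodes, which is the real, primitive case of the classical fact that rational curves in a primitive class on a generic $K3$ are nodal.

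It remains to prove transversality. Let $\Cl=(C,t_0)$ be a point of the fiber product and $u:\mathbb{P}^1\to X$ the normalisation of $C$. Since $u$ is an immersion and $c_1(X,K_0)\cdot d=0$, the normal bundle $N_u$ has degree $-2$, so $H^0(N_u)=0$, $H^1(N_u)\cong\C$, and $0\to T\mathbb{P}^1\to u^*TX\to N_u\to 0$ gives $H^0(\mathbb{P}^1,u^*TX)\cong\C^3$ (the infinitesimal M\"obius transformations) and $H^1(\mathbb{P}^1,u^*TX)\cong\C$. Translating into the language of the linearised Cauchy--Riemann operator, $d_\Cl\pi$ has trivial kernel and one-dimensional cokernel, which (on $c_X$- and $c_S$-invariant parts) is the real part of $H^1(\mathbb{P}^1,u^*TX)$; transversality of $\gamma_{\omega,I}^{\ssp}$ to $\pi$ at $\Cl$ is then the statement that the composition $T_{t_0}S^1\to T_{K_0}\R\mathcal{J}^l\to\coker(d_\Cl\pi)$ is nonzero. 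Using Serre duality and the holomorphic symplectic form of the $K3$ surface, I would identify this composition with a nonzero multiple of the derivative at $t_0$ of the function $t\mapsto\langle\hat d,\mathrm{Re}\,\Omega_{K}\rangle$, $K=\gamma_{\omega,I}^{\ssp}(t)$, which measures the failure of $\hat d$ to be of type $(1,1)$ for $K$; along the real twistor circle this function is $\langle\hat d,-c\omega_I+a\omega_K\rangle$, proportional to $\sin(\theta_0-\theta)$ in the angular coordinate, with nonzero derivative at $\theta_0$ because $\gamma_{\omega,I}^{\ssp}$ parametrises the circle. Hence $\gamma_{\omega,I}^{\ssp}$ is transverse to $\pi$ everywhere on the fiber product.

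The step I expect to be the main obstacle is this last identification: describing $\coker(d_\Cl\pi)$ as the real part of $H^1(\mathbb{P}^1,u^*TX)$ and the composition above as the derivative of the Hodge-theoretic $(1,1)$-obstruction, all carried out compatibly with the real structures — this is where the pseudo-holomorphic description of $\pi$ and the period/twistor description of $\gamma_{\omega,I}^{\ssp}$ have to be matched precisely. Establishing that the relevant real rational curves are immersed for a generic \emph{real} $K3$ (rather than just a generic complex one) also needs care, presumably by arguing that the locus of real $K3$ surfaces carrying a non-nodal rational curve in class $d$ does not fill the connected moduli of real $K3$ surfaces with a marked class $d$.
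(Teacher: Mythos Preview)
Your first assertion reproduces the argument behind Bryan--Leung's Proposition 3.1, which the paper simply cites; the uniqueness of $K$ in the full twistor sphere, combined with equivariance, forces $K\in\R\Tw(\omega,I)$. For irreducibility and nodality the paper invokes Chen's theorem (Theorem 1.1 in \cite{chen}) directly rather than arguing via Picard rank, which also delivers the nodality you would otherwise need to establish separately and sidesteps the real-versus-complex genericity issue you flag.

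The transversality step is where the approaches genuinely diverge. You propose to identify the map $T_{t_0}S^1\to\coker(d_\Cl\pi)$ explicitly with the derivative of the period obstruction via Serre duality and the holomorphic symplectic form, and you rightly single this identification out as the delicate point. The paper avoids it entirely: by Proposition 4.8 of \cite{fkps} the deformations of the pair $(X,C)$ are unobstructed, so the curve $C$ survives along a direction $\mathfrak{c}\in H^1(X,TX)$ if and only if the image of $\mathfrak{c}$ in $H^1(\cpo,N_u)$ vanishes; since by the first part $d$ is of type $(1,1)$ for only one point of the twistor circle, $C$ does not survive along the twistor tangent, hence that image is nonzero, and equivariance of the inclusion $\Tw(\omega,I)\subset\mathcal{J}^l$ passes this to the $+1$-eigenspace. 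This deformation-theoretic shortcut replaces your explicit Hodge computation with a one-line appeal to unobstructedness of pair deformations, making the step you identified as hardest disappear; your route would also work, but matching the pseudo-holomorphic description of $\coker(d_\Cl\pi)$ with the period map compatibly with the real structures is more laborious than what the paper does.
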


\begin{proof}
Proposition 3.1 in \cite{bryanleung} shows that there is exactly one element $K\in\Tw(\omega,I)$ such that there exists a $K$-holomorphic curve in the class $d$. The structure $K$ must lie in $\R\Tw(\omega,I)$ by unicity.

If $(X,I,c_X)$ is generic enough, then it follows from Theorem 1.1 of Chen in \cite{chen} that all the curves appearing in the fiber product $\R\mathcal{M}^d(X,\Omega_{\omega,I}^{\ssp})\fp{\pi}{\gamma_{\omega,I}^{\ssp}} S^1$ are nodal and irreducible.

Let us suppose to simplify the notations that the structure $I$ is the one admitting a curve in the class $d$. Then the tangent to $\Tw(\omega,I)$ at $I$ gives a class $\mathfrak{c}$ in $H^1(X,TX)$ which is the tangent space to the space of deformations of $(X,I)$. Take a rational curve $u : \cpo\rightarrow X$ in the fiber product $\R\mathcal{M}^d(X,\Omega_{\omega,I}^{\ssp})\fp{\pi}{\gamma_{\omega,I}^{\ssp}} S^1$ and denote by $C\subset X$ its image. Then both the deformations of $X$ and of the pair $(X,C)$ are unobstructed (see \cite{fkps} Proposition 4.8). In particular, the image of $\mathfrak{c}$ in $H^1(S,N_u)$ is non-zero, where $N_u = u^*TX/T \cpo$ is the normal bundle of $u$; otherwise the curve $C$ would survive along the deformation given by $\Tw(\omega,I)$. 

Since the inclusion $\Tw(\omega,I)\subset \mathcal{J}^l_{\Omega_{\omega,I}^{\ssp}}$ is equivariant, the derivative of $\gamma_{\omega,I}^{\ssp}$ at $I$ gives a non-zero class in $H^1(S,N_u)_{+1}$. In other words, the derivative of $\gamma_{\omega,I}^{\ssp}$ at $I$ does not lie in the image of $d_{u}\pi$ (see \cite{shev} Corollary 2.2.3). Since $u$ is an immersion, $N_u$ is a line bundle of degree $-2$, so the image of $d_{u}\pi$ is codimension $1$. Thus $\gamma_{\omega,I}^{\ssp}$ is transverse to $\pi$.
\end{proof}

Thus, if $(X,I,c_X)$ is a real projective $K3$ surface generic enough and $d$ is the class of a non-empty linear system $h$ on $X$, then $\chi_d(X,I,c_X)$ counts the real rational curves appearing in $h$ with appropriate signs.

  Kharlamov and R\u{a}sdeaconu defined and computed an invariant signed count of real rational curves on real projective $K3$ surfaces (see \cite{kharlr}). The sign they give to a curve coincide with Welschinger's sign, i.e. it is given by the parity of the number of real isolated double points of the curve. In fact, the invariant they obtain is the same as the one we defined above, up to a sign coming from convention choices.

\begin{theo}\label{samekr}
Let $(X,I,c_X)$ be a real projective $K3$ surface generic enough and $d$ be the class of a non-empty linear system $h$ on $X$, invariant by $c_X$. The absolute values of the count $\chi_d(X,I,c_X)$ defined by Theorem \ref{first} and the count defined by Kharlamov and R\u{a}sdeaconu are equal.
\end{theo}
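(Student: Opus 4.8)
The plan is to reduce the statement to a curve-by-curve comparison of signs and then carry out that comparison using the explicit orientation of $\pi$ from \S\ref{prooffirst}. By Proposition \ref{twistortr}, after replacing $(X,I,c_X)$ by its conjugate if necessary we may assume that the unique structure of $\R\Tw(\omega,I)$ carrying a rational curve in the class $d$ is $I$ itself; then the fiber product $\R\M^d(X,\Omega_{\omega,I}^{\ssp})\fp{\pi}{\gamma_{\omega,I}^{\ssp}} S^1$ is in natural bijection with the set $\Cl$ of irreducible nodal real rational curves of the linear system $h$, which is exactly the set over which Kharlamov and R\u{a}sdeaconu sum. Their count weights a curve by $(-1)^{m}$, where $m$ is the number of real isolated (solitary) double points, while $\chi_d(X,I,c_X)$ weights it by the sign $\varepsilon_{\ssp}$ attached to it by Theorem \ref{first} and Remark \ref{imprem}. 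Hence it suffices to prove that $\varepsilon_{\ssp}=\varepsilon_0\,(-1)^{m}$ on each such curve, with $\varepsilon_0\in\{\pm1\}$ independent of the curve; the two counts then differ only by $\varepsilon_0$ and by the conventional sign in \cite{kharlr}, which gives equality of absolute values.

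Next I would unwind $\varepsilon_{\ssp}$ at a curve $[u]$ with $u:(S,c_S)\to(X,c_X)$ an immersion, using the explicit description of the orientation of $\pi$ from \S\ref{prooffirst}. The exact sequence of real bundle pairs $0\to TS\to u^*TX\to N_u\to 0$ over $(S,c_S)$ splits the linearized operator $\DB_u$; the $TS$ summand and the $PSL_2(\R)$-reparameterization group carry canonical orientations coming from their complex structures, so one gets a canonical identification of $\ddet(\pi)_{[u]}$ with the determinant line of the induced real Cauchy--Riemann operator $D^N_u$ on the real line bundle pair $(N_u,c_N)$, with $\deg N_u=c_1(X,\Omega_{\omega,I}^{\ssp})d-2=-2$. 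Since this degree is even, $\R N_u\to \R S$ is an orientable real line bundle, and an orientation of it together with the complex orientation of the holomorphic part orients $\det(D^N_u)$. The oriented real $Spin$ structure $\ssp$ supplies this orientation of $\R N_u$: by Proposition \ref{orientspin} it orients $T\R X$, and over $\R S$ (a circle when non-empty; the fixed-point-free case being handled in the same way) the sequence $0\to T(\R S)\to u^*T\R X\to \R N_u\to 0$ together with the canonical orientation of $T(\R S)$ transports this to an orientation of $\R N_u$. By Remark \ref{imprem}, $\varepsilon_{\ssp}([u])$ is then the sign comparing this $\ssp$-orientation of $\coker(d_u\pi)\cong\coker(D^N_u)$ with the image of the positively oriented generator of $TS^1$ under $d\gamma_{\omega,I}^{\ssp}$.

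The heart of the argument is to identify this comparison sign with $\varepsilon_0(-1)^{m}$. I would argue that both $\varepsilon_{\ssp}$ and $(-1)^{m}$ are locally constant on the space of generic real immersed rational curves in $X$ away from the same walls, namely those along which the number of real solitary double points changes, and that they jump by the same factor $-1$ across each such wall. For $(-1)^{m}$ this is immediate; for $\varepsilon_{\ssp}$ it is a local computation: crossing such a wall modifies the real line bundle pair $(N_u,c_N)$ by a surgery at a conjugate pair of points $c_S(p)=q$ lying over a real point of $X$, equivalently the clutching data of the induced oriented real $Spin$ structure across the emerging node differs from the trivial one by the nontrivial loop in $\pi_1$, which reverses the $\ssp$-induced orientation of $\det(D^N_u)$; a real hyperbolic node and a conjugate pair of non-real nodes leave the sign unchanged (continuity through a genuine real point, resp. cancellation of two equal contributions). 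This local model computation is precisely the one underlying the proof of Theorem \ref{first}, and can alternatively be read off from the general comparison of the orientation of moduli spaces of real rational curves with the Welschinger sign established in \cite{georzing} and \cite{article}; it can also be verified directly on $\cpo$ with the standard and the conjugate-pair real structures, which are the only local models needed. Carrying out this bookkeeping is the main obstacle; everything else follows from Proposition \ref{twistortr} and the explicit orientation of $\pi$.

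Finally, summing over $\Cl$ gives $\chi_d(X,I,c_X)=\varepsilon_0\sum(-1)^{m}$, which coincides, up to the global sign $\varepsilon_0$ and the conventional sign in \cite{kharlr}, with the Kharlamov--R\u{a}sdeaconu count; in particular the two have the same absolute value.
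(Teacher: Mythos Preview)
Your overall reduction is right and matches the paper: by Proposition~\ref{twistortr} the fiber product is exactly the set of real nodal rational curves in $h$, and the statement follows once you show $\varepsilon_{\ssp}(u)=\varepsilon_0(-1)^{m(u)}$ for a curve-independent $\varepsilon_0$. This is precisely Proposition~\ref{signk3}.

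Where you diverge from the paper, and where the gap lies, is in the proof of that sign identity. You propose a wall-crossing argument on ``the space of generic real immersed rational curves in $X$''. But for fixed $(X,I)$ there is no such space to move in: the curves are finitely many isolated points. To make the argument work you would have to deform $(X,I,h)$ in the moduli of polarized real $K3$'s, prove that any two real rational curves in the deformation class can be joined through a one-parameter family of immersed nodal real rational curves with only the desired wall-crossings, and then actually perform the local sign computation at each type of wall. None of this is carried out; the appeal to ``the computation underlying the proof of Theorem~\ref{first}'' and to \cite{georzing}, \cite{article} does not substitute for it. A second issue is that your orientation of $\det(D^N_u)$ uses only the \emph{orientation} of $T\R X$ coming from $\ssp$, whereas the paper's orientation in Proposition~\ref{detd} uses a \emph{$Spin$} structure $\zeta_{\gamma}$ on $u^*T\R X$ via Lemma~\ref{pintriv}; it is exactly this $Spin$ comparison, not a wall-crossing, that produces the factor $(-1)^{m(u)+1}$.

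The paper's argument avoids all of this by exploiting a structure you do not use: the holomorphic $2$-form $v\in H^0(X,K_X)_{+1}$. It gives, for every curve $u$ at once, an explicit $\Z/2\Z$-equivariant isomorphism $f:N_u\to K_{\cpo}$, $[\xi]\mapsto v(du(\cdot),\xi)$. This transports the problem to a single fixed operator $\DB_i^*$ on $K_{\cpo}$, so the curve-independent sign $\epsilon$ is defined once and for all by comparing two natural orientations of $\ddet(\DB_i^*)$. The $Spin$ bookkeeping of Lemma~\ref{pintriv} then shows that $f$ respects the fixed structures up to $(-1)^{m(u)+1}$, and a two-line K\"ahler-metric computation shows that $f\circ d_0\gamma_{\omega,I}^{\ssp}$ always preserves orientation. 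That is what makes $\varepsilon_0$ visibly independent of $u$; your approach would have to recover this independence by an indirect connectedness argument that is not supplied.
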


We obtain Theorem \ref{samekr} as a corollary of Proposition \ref{signk3} which appears in \S \ref{k3parc}.

\section{Proof of the invariance}

\subsection{Orientability of $\pi$ - Proof of Theorem \ref{first}}

\subsubsection{Complex vector bundles with real structure}\label{rec}

Let us recall some facts from \cite{art} which we will use in \S \ref{prooffirst}. Take an oriented $2$-sphere $S$ and consider a complex vector bundle $N$ over it. Equip $S$ with a real structure, i.e. with an orientation reversing involutive diffeomorphism $c_S$. A real structure on $N$ is an involutive automorphism $c_N$ of $N$ lifting $c_S$ and being $\C$-antilinear in the fibers. The fixed point set of $c_N$ is denoted by $\R N$. It is a real vector bundle over $\R S = Fix(c_S)$ of the same rank as $N$. The isomorphism classes of such pairs $(N,c_N)$ are classified by the rank of $N$, the degree of $N$ and the first Stiefel-Whitney class of $\R N$ (see \cite{bishurt} Propositions 4.1 and 4.2).

Now fix $j_0\in \js$ such that $d c_S\circ j_0 = -j_0 \circ d c_S$ and let us denote by $\rop$ the set of all generalized Cauchy-Riemann operators of class $\Cl^{l-1}$ on $(N,c_N)$ which commute with the action of $c_N$ (see Appendix C.1 in \cite{MDS}). The set $\rop$ of all such operators is an affine Banach space. In particular it is contractible.

 Note that the involution $c_N$ acts on the spaces $L^{k,p}(S,N)$ and $L^{k-1,p}(S,\Lambda^{0,1}S\otimes N)$. We denote by $L^{k,p}(S,N)_{+1}$ and $L^{k-1,p}(S,\Lambda^{0,1}S\otimes N)_{+1}$ the $+1$ eigenspaces of $c_N$. The elements of $\rop$ restrict as operators from $L^{k,p}(S,N)_{+1}$ to $L^{k-1,p}(S,\Lambda^{0,1}S\otimes N)_{+1}$, and this is how we will usually consider them. Moreover, those operators are Fredholm, so we can restrict the determinant line bundle $\Det$ over $\rop$. This line bundle is orientable.

On the other hand, let $\rmobsp$ be the group of automorphisms of $(S,j_0)$ commuting with $c_S$, and let
\begin{multline*}
\raut =  \left\{ f : N\rightarrow N\, |\, f \text{ lifts } \phi\in\rmobsp, \right. \\ \left. f \text{ is a }\C \text{-linear automorphism in the fibers}, f\circ c_N = c_N\circ f \right\} .
\end{multline*}
Then $\raut$ acts naturally on $\Det$, and in particular on its orientations. Moreover, this last action depends only on the homotopy classes of the elements of $\raut$.

Let us consider a particular example in detail. Take $\TC^2$ to be the trivial bundle of rank $2$ over $(S,j_0)$. Define a real structure $c_{\TC}$ on $\TC^2$ by $c_{\TC}(z,v) = (c_{S}(z),\overline{v})$. Recall from Lemma \ref{existspin} that $(\TC^2,c_{\TC})$ admits a unique real $Spin$ structure. When $\R S$ is non-empty, then the fixed points set of $c_{\TC}$ is the trivial bundle $\TR^2$ over $\R S$, and the two orientations of the real $Spin$ structure correspond naturally to the two orientations of $\TR^2$. Moreover, the real vector bundle $\TR^2$ over $\R S$ admits exactly two $Pin^{\pm}$ structures. We have the following Lemma (see Lemmas 2.2, 2.3 and 3.1 in \cite{art}).

\begin{lemma}\label{recall}
  Suppose that $\R S$ is empty. The group $\rmobsp$ is connected and the group $\rautt$ has two connected components : one containing the identity and the other containing the automorphism $r : (z,(v_1,v_2))\in \TC^2\mapsto (z,(-v_1,v_2))\in \TC^2$. The automorphism $r$ does not preserve the orientations of $\Dett$ and does not preserve the oriented real $Spin$ structures on $\TC^2$.

Suppose that $\R S$ is non-empty. The group $\rmobsp$ has two connected components : one containing the identity and the other containing an automorphism $h$ which exchanges the two hemispheres. The group $\pi_0(\rautt)$ is generated by three elements : $a : (z,v)\in \TC^2\mapsto (h(z),v)\in \TC^2$, $m : (z,(v_1,v_2))\in \TC^2\mapsto (z,(-v_1,v_2))\in \TC^2$, and $t$ is an automorphism of $\TC^2$ lifting the identity and such that its restriction to the real part of $\TC^2$ gives a generator of $\pi_1(SL_2(\R))$. The automorphisms $m$ and $t$ act non trivially on the orientations of $\Dett$ whereas $a$ acts trivially. Moreover, $a$ preserves the orientations and $Pin^{\pm}$ structures of $\TR^2$, $m$ preserves the $Pin^{\pm}$ structures but not the orientations of $\TR^2$, and $t$ preserves the orientations but not the $Pin^{\pm}$ structures of $\TR^2$.\qed
\end{lemma}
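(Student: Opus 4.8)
The plan is to deduce the Lemma, as in \cite{art}, from three essentially independent inputs: the topology of the real Möbius group, the homotopy type of the equivariant gauge group of $\TC^2$, and the behaviour of a circle of Cauchy--Riemann operators under the gluing automorphism. First I would fix a biholomorphism $(S,j_0)\cong\cpo$, after which $c_S$ is conjugate either to the free antipodal involution $z\mapsto -1/\bar z$ (the case $\R S=\emptyset$) or to complex conjugation $z\mapsto\bar z$, whose fixed locus is the circle $\rpo$ (the case $\R S\neq\emptyset$). In the first case $\rmobsp$ is the centralizer of the antipodal map in $PSL_2(\C)$, namely $PSU(2)\cong SO(3)$, so it is connected; in the second it is the centralizer of conjugation, $PGL_2(\R)$, which has two components, the nontrivial one containing a map $h$ exchanging the two hemispheres. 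This gives the stated $\pi_0(\rmobsp)$.

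Next I would compute $\pi_0(\rautt)$ from the exact sequence of groups $\GP_0\to\rautt\to\rmobsp$, where $\GP_0$ is the group of automorphisms of $(\TC^2,c_{\TC})$ covering the identity; as $\TC^2$ is trivial, $\GP_0$ is the group of maps $g:S\to GL_2(\C)$ with $g\circ c_S=\bar g$ (entrywise conjugate). When $\R S=\emptyset$ this is the space of sections of a $GL_2(\C)$-bundle over $S/c_S=\R P^2$ whose monodromy is conjugation, and obstruction theory with the twisted coefficients $\pi_1(GL_2(\C))\cong\Z$ yields $\pi_0(\GP_0)\cong\Z/2\Z$, generated by $r$. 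When $\R S\neq\emptyset$, splitting $S$ along $\rpo$ into two conjugate discs identifies $\GP_0$ with the maps from a disc to $GL_2(\C)$ carrying $\rpo$ into $GL_2(\R)$; since $GL_2(\R)$-valued loops are null-homotopic in $GL_2(\C)$, restriction to $\rpo$ is a fibration with connected fibres, so $\pi_0(\GP_0)\cong\pi_0(\mathrm{Map}(\rpo,GL_2(\R)))$, which contains the reflection $m$ (of order $2$) and the winding generator $t$ (of infinite order, restricting on $\rpo$ to a generator of $\pi_1(SL_2(\R))$). Computing the connecting homomorphism $\pi_1(\rmobsp)\to\pi_0(\GP_0)$ — which turns out to kill neither $m$ nor $t$ — and adjoining the lift $a$ of $h$ that acts by the identity in the fibres, one recovers the stated generators of $\pi_0(\rautt)$.

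For the action on orientations of $\Dett$, I would fix $D\in\rop$ and observe that each $f\in\rautt$ induces a self-map $D\mapsto f^{*}D$ of the contractible space $\rop$ covered by a bundle map of $\Dett$. This bundle map preserves the two orientations of $\Dett$ if and only if the determinant line bundle over $S^{1}$ of the family obtained by gluing the straight-line path from $D$ to $f^{*}D$ by means of $f$ is orientable, i.e. if and only if the mod-$2$ spectral flow of that family vanishes. I would evaluate this spectral flow by homotoping to a model operator and using excision together with the standard fact that clutching a Cauchy--Riemann operator on a complex line bundle by a degree-$k$ loop changes the index by $2k$, keeping track throughout of the $+1$-eigenspace of $c_N$. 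The outcome should be: $a$ produces spectral flow $0$ (up to isotopy in $\rautt$ it is a purely geometric relabelling of $S$), so it acts trivially, while $r$, $m$ and $t$ each produce odd spectral flow, so they act nontrivially. The same circle-of-operators picture applied to the unique real $Spin$ structure of Lemma \ref{existspin} shows that $r$ and $m$ reverse its orientation, because the reflection $\mathrm{diag}(-1,1)$ reverses orientations of frames.

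Finally, in the case $\R S\neq\emptyset$, I would restrict $a$, $m$ and $t$ to the trivial bundle $\TR^{2}$ over $\R S\cong S^{1}$. The automorphism $a$ covers a diffeomorphism of $\R S$ and is the identity in the fibres, so it preserves the orientation and both $Pin^{\pm}$ structures; $m$ restricts to the constant reflection $\mathrm{diag}(-1,1)$, which reverses the orientation of $\TR^{2}$ but, being a reflection and not a loop, preserves both $Pin^{\pm}$ structures; $t$ restricts to a loop $\R S\to SL_2(\R)$ generating $\pi_1(SL_2(\R))\cong\pi_1(SO(2))$, so it preserves the orientation (its determinant is $1$) but does not lift to a loop in $Pin^{\pm}(2)$ and therefore exchanges the two $Pin^{\pm}$ structures — all of this being a direct application of the obstruction-theoretic description of $Spin$ and $Pin^{\pm}$ structures over the circle. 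The main obstacle is the third step: relating the first Stiefel--Whitney class of the circle of Cauchy--Riemann operators to the homotopy class of the gluing automorphism, and carrying the index computation through correctly in the presence of the real structure. The remaining ingredients — the topology of $PGL_2(\R)$ and of the equivariant gauge groups, and the classification of $Spin$ and $Pin^{\pm}$ structures over $S^{1}$ — are routine.
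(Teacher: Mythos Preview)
The paper does not prove this lemma; it is stated with a \qed{} and attributed to Lemmas~2.2, 2.3 and~3.1 of \cite{art}. Your outline reproduces the shape of that argument correctly: the fibration of $\rautt$ over $\rmobsp$ with fibre the equivariant gauge group, the identification of that fibre (via restriction to $\rpo$ in the non-empty case, via the twisted $\R P^2$ description in the empty case), and the obstruction-theoretic reading of $Pin^{\pm}$ structures over the circle are exactly the right ingredients. One small sharpening: the projection $\rautt\to\rmobsp$ has an obvious section (lift a M\"obius transformation by the identity on fibres), so the connecting map $\pi_1(\rmobsp)\to\pi_0$ of the gauge group is zero, not merely ``does not kill $m$ or $t$''.

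The one point that needs correction is your uniform use of spectral flow for the action on $\Dett$. That framing is right for $t$: since $t$ is a non-constant gauge transformation, $t^*\DB_{\TC^2}\neq\DB_{\TC^2}$, and one really must traverse a path in $\rop$ and compute a mod-$2$ spectral flow (equivalently, the parity of the index jump under the degree-one clutching). But for $r$, $m$ and $a$ this is the wrong picture: all three are either constant in the fibre or lift a biholomorphism by the identity, so they preserve $\DB_{\TC^2}$ exactly and the spectral flow is zero. Their action on $\Dett$ is seen \emph{directly} on $\ker(\DB_{\TC^2})\cong\R^2$: $r$ and $m$ act by the reflection $\mathrm{diag}(-1,1)$, which reverses orientation, while $a$ acts by the identity on constant sections, which preserves it. Phrasing $r$ and $m$ as ``odd spectral flow'' is at best misleading and at worst wrong. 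Keep the spectral-flow argument for $t$ and treat the others by this direct kernel computation.
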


Let us fix once and for all an oriented real $Spin$ structure on $(\TC^2,c_{\TC})$ and a $Spin$ structure on $\TR^2$ when $\R S\neq \emptyset$ in the following way :
\begin{itemize}
\item if $\R S = \emptyset$ : taking the euclidean metric on the fibers, the bundle of oriented frame $F^+_{\TC^2}$ is the trivial bundle $S\times SO(4)$. The induced involution is $\overline{c_{\TC}} : (z,M)\in S\times SO(4)\mapsto (c_S(z),TMT^{-1})\in S\times SO(4)$, where $T\in SO(4)$ is the diagonal matrix $((-1)^i\delta_{i,j})_{1\leq i,j\leq 4}$. The oriented real $Spin$ structure is then given by the trivial bundle $S\times Spin(4)$ with the standard double cover to $S\times SO(4)$ and the lift of $\overline{c_{\TC}}$ is given by $\sigma_{\TC^2} : (z,p)\in S\times Spin(4)\mapsto (z,\tilde{T} p \tilde{T}^{-1}) \in S\times Spin(4)$, where $\tilde{T}\in Spin(4)$ is one of the two lifts of $T$.
\item if $\R S \neq \emptyset$ : the construction is the same as the previous one. Moreover, the orientation of $\TR$ induced by this oriented real $Spin$ structure (see  Proposition \ref{orientspin}) is the canonical one. The $Spin$ structure we fix on $\TR$ is given by the trivial bundle $\R S\times Spin(2)$ with the standard double cover to $\R S\times SO(2)$.
\end{itemize}

Finally, in both cases, we also fix an orientation on $\Dett$ as follows. Take the real Cauchy-Riemann operator $\DB_{\TC}\oplus \DB_{\TC}$ on $\TC^2$, where $\DB_{\TC} = \frac{1}{2} (d + i\circ d \circ j_0)$ is the standard Cauchy-Riemann operator on the complex-valued functions on $S$. It is surjective, and its kernel is of dimension $2$. Fix once and for all the orientation of $\ddet(\DB_{\TC}\oplus \DB_{\TC})$ by choosing $(1,0)\wedge (0,1)\in \Lambda_{\R}^2\ker(\DB_{\TC}\oplus \DB_{\TC})$ to generate this line. Now since the space of all real generalized Cauchy-Riemann operators on $(\TC^2,c_{\TC})$ is contractible, the orientation we chose induces an orientation of $\Dett$.

It is straightforward to check that if $(S',c_{S'},j_0')$ is another real sphere, isomorphic to $(S,c_S,j_0)$, then an isomorphism between the trivial bundles over those two spheres that preserves the fixed oriented real $Spin$ structure and $Spin$ structure on the real part also preserves the fixed orientations on the determinant bundles.

\subsubsection{Proof of Theorem \ref{first}}\label{prooffirst}

Let us first recall from \cite{shev} how $\rmdo$ inherits a structure of Banach manifold from $\pdo$. One chooses $j_0\in \js$ and considers $\hmdo = \pdo \cap L^{k,p}(S,X)\times\{j_0\}\times \jo$. It is a separable Banach manifold of class $\Cl^{l-k}$. The subgroup $\mobs\subset Diff(S)$ consisting of all $j_0$-holomorphic and $j_0$-anti-holomorphic diffeomorphisms of $S$ acts on $\hmdo$. Moreover, this action is of class $\Cl^{l-k}$. The only elements of $\mobs$ having fixed points are real structures on $(S,j_0)$, and a point of $(u,J)\in\hmdo$ can only be fixed by at most one real structure, which we will denote by $c_u$. We write $\rhmdo$ to be the reunion of all those fixed points. It is a disjoint union of separable Banach manifolds of class $\Cl^{l-k}$, each manifold being the fixed locus of a given real structure. The subgroup $\mobsp\subset \mobs$ consisting of automorphisms of $(S,j_0)$ acts freely on $\rhmdo$ in a $\Cl^{l-k}$-smooth way. Thus, the quotient $\rhmdo/\mobsp = \rmdo$ becomes a Banach manifold of class $\Cl^{l-k}$.

Moreover, define $\E$, $\E'$, $\T$ and $\T'$ to be the Banach bundles of class $\Cl^{l-k}$ over $\rhmdo$ whose fibers over a point $(u,J)\in\rhmdo$ are respectively $L^{k,p}(S,u^*TX)_{+1}$, $L^{k-1,p}(S,\Lambda^{0,1}S\fo{j_0}{J} u^*TX)_{+1}$, $L^{k,p}(S,TS)_{+1}$ and $L^{k-1,p}(S,\Lambda^{0,1} S\fo{j_0}{J} TS)_{+1}$. The indices $+1$ indicates that we take the $+1$-eigenspace of the linear map induced by $c_X$ and $c_u$ on each of those vector spaces. Then one has two bundle homomorphisms $\DB_{j_0} : \T\rightarrow \T'$ and $D : \E\rightarrow \E'$. The first one is the Cauchy-Riemann operator on the holomorphic line bundle $TS$. To define the second one, choose a metric $g_X$ on $X$ which is invariant by $c_X$ and let $\nabla$ be its associated Levi-Civita connection. Then the restriction $D_{u,J}$ of $D$ over $(u,J)\in\rhmdo$ is given by
\[
v\in \E_{(u,J)}\mapsto \nabla v + J\circ \nabla_{j_0} v + \nabla_v J\circ d u \circ j_0 \in \E'_{(u,J)}.
\]
Moreover, there is an injective morphism of Banach bundles from $\T$ to $\E$ and one from $\T'$ to $\E'$ given in the fibers over $(u,J)\in\rhmdo$ by $\xi\in L^{k,p}(S,TS)_{+1}\mapsto du(\xi)\in L^{k,p}(S,u^*TX)_{+1}$ for the first one and similarly for the second one. Then Lemma 1.4.2 in \cite{shev} states that the following diagram commutes
\begin{equation}\label{diagg}
\xymatrix{
0 \ar[r] & \T \ar[r] \ar[d]^{\DB_{j_0}} & \E \ar[r] \ar[d]^{D} & \E /\T \ar[r] \ar[d]^{\OB} & 0 \\
0 \ar[r] & \T'\ar[r]                 & \E' \ar[r]           & \E'/\T' \ar[r]           & 0
}
\end{equation}
where $\OB : \E/ \T \rightarrow \E'/\T'$ is induced by $D$ on the quotient. When restricted to fibers, the previous diagram is in fact a short exact sequence of Fredholm operators. Thus it induces a natural isomorphism of continuous line bundles over $\rhmdo$ between $\ddet(D)$ and $\ddet(\DB_{j_0})\otimes \ddet(\OB)$.

On the other hand, the action of $\mobsp$ on $\rhmdo$ lifts naturally to a continuous action on the three line bundles $\ddet(D)$, $\ddet(\DB_{j_0})$ and $\ddet(\OB)$. Corollary 2.2.3 in \cite{shev} and Proposition 1.9 in \cite{wel1} then state that there is a natural isomorphism between $\ddet(d\pi)$ and $\ddet(\OB)/\mobsp$.

Thus, to prove Theorem \ref{first}, it is enough to show that 
\begin{enumerate}
\item\label{one} $\ddet(\DB_{j_0})$ can be oriented in a canonical way,
\item\label{two} $\ddet(D)$ can be oriented by the choice of an oriented real $Spin$ structure on $(TX,dc_X)$,
\item\label{three} the action of $\mobsp$ preserves the orientations given in \ref{one} and \ref{two}.
\end{enumerate}
To this end, it will be useful to decompose the space $\rhmdo$ in the union of two open submanifolds : one, $\rhmdoe$, containing the real curves which have empty real part, the other, $\rhmdon$, containing the real curves which have non-empty real part. Note that the action of $\mobsp$ preserves those two submanifolds. Thus we can show \ref{one}, \ref{two} and \ref{three} for each of those two submanifolds independently.

\begin{prop}\label{dbj}
  The line bundle $\ddet(\DB_{j_0})$ admits a canonical orientation and the action of $\mobsp$ preserves it.
\end{prop}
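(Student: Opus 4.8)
The plan is to use that $TS$ is the holomorphic tangent bundle of a $2$-sphere, i.e.\ a line bundle of degree $2$, so that $\DB_{j_0}$ has vanishing cokernel and its kernel is the Lie algebra of the automorphism group of the real sphere. Concretely, for every $(u,J)\in\rhmdo$ with associated real structure $c_u$ on $(S,j_0)$, the operator $\DB_{j_0}$ is surjective, since $\deg TS=2$ forces $H^1(S,TS)=0$, and has $3$-dimensional real kernel; hence $\ddet(\DB_{j_0})_{(u,J)}=\Lambda_\R^3\ker(\DB_{j_0})_{(u,J)}$, and $\ker(\DB_{j_0})_{(u,J)}$ is exactly the space $\mathfrak g_{c_u}$ of holomorphic vector fields on $(S,j_0)$ invariant under $c_u$, that is, the Lie algebra of the group of holomorphic automorphisms of $(S,j_0)$ commuting with $c_u$. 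Since $\rhmdo$ is a disjoint union of Banach manifolds on each of which $c_u$ is constant, $\ker(\DB_{j_0})$ is over each such component a trivial bundle with fiber a fixed $3$-dimensional Lie algebra, isomorphic to $\mathfrak{su}(2)\cong\mathrm{Lie}(SO(3))$ over $\rhmdoe$ and to $\mathfrak{sl}_2(\R)$ over $\rhmdon$; in both cases it is simple.

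Next I will produce the canonical orientation. On a simple $3$-dimensional real Lie algebra $\mathfrak g$ with Killing form $B$, the Cartan form $\omega_{\mathfrak g}(X,Y,Z)=B([X,Y],Z)$ is totally antisymmetric and nowhere vanishing (a direct check on $\mathfrak{su}(2)$ and $\mathfrak{sl}_2(\R)$), so $\omega_{\mathfrak g}\in\Lambda^3\mathfrak g^*\setminus\{0\}$ orients $\mathfrak g$; this orientation involves no choice beyond the Lie bracket, is preserved by every automorphism of $\mathfrak g$, and is carried to the corresponding orientation by every Lie algebra isomorphism. Applying this fiberwise orients $\ker(\DB_{j_0})$, hence $\ddet(\DB_{j_0})$, over all of $\rhmdo$; the continuity of this fiberwise orientation is immediate here because $c_u$ is locally constant, so the bundle of Lie algebras $(u,J)\mapsto\mathfrak g_{c_u}$ is locally constant as well. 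This is the asserted canonical orientation. (This upgrades the mere orientability of $\Det$ recalled from \cite{art} to a canonical, natural choice.)

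It remains to check $\mobsp$-invariance and to flag the one delicate point. An element $\phi\in\mobsp$ sends $(u,J)$, with real structure $c_u$, to $(u\circ\phi^{-1},J)$, with real structure $\phi\circ c_u\circ\phi^{-1}$, and the induced isomorphism $\ker(\DB_{j_0})_{(u,J)}\to\ker(\DB_{j_0})_{(u\circ\phi^{-1},J)}$ is $\xi\mapsto\phi_*\xi$, which is precisely the Lie algebra isomorphism $\mathfrak g_{c_u}\to\mathfrak g_{\phi c_u\phi^{-1}}$ induced by $\mathrm{Ad}(\phi)$; by the previous paragraph it carries the Cartan orientation to the Cartan orientation, so the action preserves the orientation of $\ddet(\DB_{j_0})$. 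The step needing care is the identification $\ker(\DB_{j_0})_{(u,J)}=\mathfrak g_{c_u}$ and the book-keeping that this assembles into a bona fide bundle of simple Lie algebras, together with ruling out an orientation-reversing symmetry — the latter being exactly what makes ``canonical'' meaningful, and it holds because $\mathrm{Aut}(\mathfrak{su}(2))=SO(3)$ is connected while $\mathrm{Aut}(\mathfrak{sl}_2(\R))$ acts on $\mathfrak{sl}_2(\R)$ through linear maps of determinant $+1$ (as $\det\mathrm{Ad}(g)=1$ for $g\in GL_2$). As an alternative to the Cartan form one may instead fix once and for all an orientation of $\mathfrak{su}(2)$ and of $\mathfrak{sl}_2(\R)$ and transport it along isomorphisms; this is well defined by the same determinant computation, since any two isomorphisms differ by an automorphism.
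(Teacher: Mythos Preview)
Your proof is correct and takes a genuinely different route from the paper's. The paper orients $\ddet(\DB_{j_0})$ by transporting each fiber to a standard model $(\cpo,i,c_\emptyset)$ or $(\cpo,i,c_{\rpo})$ via an isomorphism $\psi$, and then fixing once and for all an explicit positive basis of $\ker(\DB_i)$ (for non-empty real part: $v_1(z)=(z-i)(z+i)$, $v_2(z)=(1-z)(1+z)$, $v_3(z)=z$); independence of $\psi$ and $\mobsp$-invariance are checked by hand, by verifying that the non-identity component of the real M\"obius group (containing $z\mapsto -z$) preserves the chosen orientation. Your approach instead recognizes $\ker(\DB_{j_0})_{(u,J)}$ as a simple $3$-dimensional real Lie algebra and orients it by the Cartan $3$-form, so that naturality and $\mobsp$-invariance are automatic. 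Your argument is cleaner and more conceptual, but the paper's explicit choice is not arbitrary: as explained in Remark~\ref{baserem} and exploited in Lemma~\ref{orbase}, the specific sections $v_j$ are selected so that the induced orientation on $\R_{c_S}\TT_n$ matches Ceyhan's orientation of $\rmok$, which is essential for the gluing comparison in \S\ref{orsection}. If you adopt the Cartan-form orientation you would have to verify this compatibility separately (or carry a global sign through the rest of the argument), since your orientation and the paper's could a priori differ by a sign in one or both of the two cases.
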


\begin{proof}
Let us first choose for each $(u,J)\in\rhmdo$ an orientation of $\ddet(\DB_{j_0})_{(u,J)}$ in the following way. If the curve has empty real part, take an isomorphism $\psi : (S,c_u,j_0)\rightarrow (\cpo, c_{\emptyset},i)$, where $c_{\emptyset}(z) = -\frac{1}{\overline{z}}$; if the real part is non-empty, take an isomorphism $\psi : (S,c_u,j_0)\rightarrow (\cpo, c_{\rpo},i)$, where $c_{\rpo}(z) = \overline{z}$. In both cases, $\psi$ induces an isomorphism between $\ddet(\DB_{j_0})_{(u,J)}$ and $\ddet(\DB_{i})$. Now, fix once and for all an orientation of $\ddet(\DB_i)$ by requiring that $v_1(z) = (z-i)(z+i)$, $v_2(z) = (1-z)(1+z)$ and $v_3(z) = z$ is a positive basis of $\ker(\DB_i)$ when the real part is non-empty, and $v'_1(z) = (i-z)(i+z)$, $v'_2(z) = i(1-z)(1+z)$ and $v'_3(z) = iz$ is a positive basis of $\ker(\DB_i)$ when the real part is empty (those choices can appear to be arbitrary, but will be justified in the proof of Theorem \ref{second}, see Remark \ref{baserem}). This gives in both cases an orientation on $\ddet(\DB_{j_0})_{(u,J)}$. 

This orientation does not depend on the choice of $\psi$. Indeed, another isomorphism will differ from $\psi$ by an automorphism of $\cpo$ commuting with the appropriate real structure. But in the first case, the group of such automorphisms is connected; in the second case, it has two connected component : one containing the identity, and one containing $z\mapsto -z$. By a straightforward calculation, one can check that this last automorphism preserves the orientations of $\ddet(\DB_i)$. This also shows that the action of $\mobsp$ on these orientations is trivial.

Finally, it is clear that these orientations depend continuously on $(u,J)\in \rhmdo$.
\end{proof}

We now need two auxiliary technical results. Let $(\Sigma,g)$ be a compact and oriented Riemann surface. Let us denote by $F_{\Sigma}$ the oriented frame bundle of $\Sigma$. Then, given an immersed curve $a\subset \Sigma$, its tangential lift gives a class $\vec{a} \in H_1(F_{\Sigma},\Z/2\Z)$. Suppose moreover that the only singularities of $a$ are transverse double points, and let $m(a)\in \N$ be the number of such points.

\begin{lemma}\label{lifts}
  Let $(\Sigma,g)$ be a compact and oriented Riemann surface and let $a$ and $b$ be two immersed and connected curves on $\Sigma$ whose only singularities are transverse double points. Then
\[
\vec{a} = \vec{b} \text{ in } H_1(F_{\Sigma},\Z/2\Z) \Leftrightarrow a = b \text{ in } H_1(\Sigma,\Z/2\Z),\, \text{ and } m(a) = m(b) \mod 2.
\]
\end{lemma}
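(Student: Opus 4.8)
The plan is to analyze the fibration $\pi_F : F_{\Sigma} \to \Sigma$ and the associated Gysin/long exact sequences in $\Z/2\Z$-homology. The fiber of $\pi_F$ is $SO(2) \cong S^1$, so the key topological input is the Serre spectral sequence (equivalently the Wang sequence) which gives a short exact sequence
\[
0 \to \Z/2\Z \cdot \phi \to H_1(F_{\Sigma},\Z/2\Z) \xrightarrow{(\pi_F)_*} H_1(\Sigma,\Z/2\Z) \to 0,
\]
where $\phi$ denotes the class of the fiber $S^1$; here the $\Z/2\Z$ on the left is because $w_2(T\Sigma)$ reduced mod $2$, i.e. the Euler class mod $2$, need not vanish, but in any case $2\phi = 0$ so the fiber class is either zero or an order-two element, and the sequence splits as sets over each class of $H_1(\Sigma,\Z/2\Z)$. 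The upshot is that for two immersed connected curves $a,b$, the conditions $\vec a = \vec b$ in $H_1(F_\Sigma,\Z/2\Z)$ and $[a]=[b]$ in $H_1(\Sigma,\Z/2\Z)$ together are equivalent to $[a]=[b]$ in $H_1(\Sigma)$ \emph{and} $\vec a - \vec b \in \{0,\phi\}$ equals $0$; so the content of the lemma is to identify the ``fiber-class discrepancy'' $\vec a - \vec b$ (which lives in $\Z/2\Z\cdot\phi$ once we know $[a]=[b]$ downstairs) with the parity difference $m(a)-m(b) \bmod 2$.

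First I would reduce to a local computation. Fixing $[a]=[b]$ in $H_1(\Sigma,\Z/2\Z)$, the difference $\vec a + \vec b$ (working in $\Z/2\Z$) maps to $0$ in $H_1(\Sigma)$, hence lies in the image of $\Z/2\Z\cdot\phi$; so I must compute the coefficient of $\phi$ in $\vec a + \vec b$ and show it equals $m(a)+m(b) \bmod 2$. The natural strategy is: (i) show that $\vec a \bmod \phi$ depends only on $[a]\in H_1(\Sigma,\Z/2\Z)$ (immediate from the exact sequence), so that $\vec a + \vec c \in \Z/2\Z\cdot\phi$ whenever $c$ is \emph{any} immersed connected curve homologous to $a$ mod $2$; (ii) choose a convenient reference curve $c$ in the class $[a]$ — for instance an embedded (i.e. $m=0$) representative, which exists for any mod-$2$ homology class on a surface (a simple closed curve, possibly a union, but connectivity of $a$ and the classification of curves on surfaces let us take a single embedded representative in the same free homotopy-adjusted mod-2 class, or more carefully, handle the general case by additivity of both sides under ``band sum''); (iii) prove the normalization $\vec a + \vec c = (m(a) \bmod 2)\,\phi$ by a sequence of elementary moves. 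Concretely, the tangential lift $\vec a$ changes by exactly $\phi$ under a Reidemeister-I type move (adding or removing a small kink, which creates/destroys one self-intersection and winds the tangent direction once around the fiber), changes by $0$ under Reidemeister-II and -III moves and under regular homotopy not of type I, and any two immersed curves representing the same $\Z/2\Z$-homology class and with only transverse double points can be connected by such moves; counting, the parity of the number of type-I moves equals $m(a)+m(c)\bmod 2 = m(a) \bmod 2$, by the Whitney–Graustein-type invariance of $m \bmod 2$ under the other moves. This is really the statement that the mod-$2$ rotation number (turning number) of an immersed curve, which is exactly the obstruction class $\vec a \bmod (\text{homology})\in\Z/2\Z\cdot\phi$, satisfies ``rotation number $\equiv$ number of double points'' mod $2$ — the familiar parity relation (for a generic immersion $S^1 \looparrowright \Sigma$, the turning number and the self-intersection number have the same parity), which I would either invoke or prove by the kink-counting argument just sketched.

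The main obstacle I anticipate is making precise and correct the reduction step (ii)–(iii) on a \emph{general} oriented surface $\Sigma$ rather than on $\R^2$ or $S^2$: the turning number of an immersed loop is only well-defined mod the contribution of the ambient curvature/Euler class, so one must phrase everything in terms of the class $\vec a \in H_1(F_\Sigma,\Z/2\Z)$ itself (which is always defined) rather than an integer turning number. Concretely the delicate point is verifying that two immersed connected curves with the same mod-$2$ homology class can be joined by a regular homotopy (through immersions with only transverse double points, allowing the double-point count to change) together with finitely many type-I kink insertions — i.e. a Hirsch–Smale/Whitney-type classification of immersions of $S^1$ into $\Sigma$ up to regular homotopy, which says the regular homotopy class is detected precisely by $\vec a \in H_1(F_\Sigma,\Z/2\Z)$ — and then that the parity of double points is a regular-homotopy invariant modulo the count of type-I moves. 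Once these standard facts are assembled, the equivalence in the statement follows by combining the exact sequence (which handles the ``$\Leftarrow$'' for the homology component and reduces the $\phi$-component to a single $\Z/2\Z$) with the parity identity $\vec a + \vec b = (m(a)+m(b))\phi$ in $\Z/2\Z\cdot\phi$ (which handles the rest), giving both directions at once.
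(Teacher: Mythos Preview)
Your Gysin-sequence framing is correct and clarifying: the short exact sequence
\[
0 \to \Z/2\Z\cdot\phi \to H_1(F_\Sigma,\Z/2\Z) \to H_1(\Sigma,\Z/2\Z) \to 0
\]
does hold (the Euler number of a closed oriented surface is even), and the lemma indeed reduces to identifying the $\phi$-component of $\vec a + \vec b$ once $[a]=[b]$. The gap is in step~(iii). Regular homotopy together with Reidemeister-I kinks can only connect curves that are \emph{freely homotopic} in $\Sigma$: the Hirsch--Smale theorem classifies immersions $S^1\to\Sigma$ up to regular homotopy by the free homotopy class of the tangential lift $S^1\to F_\Sigma$, i.e.\ by a conjugacy class in $\pi_1(F_\Sigma)$, \emph{not} by its image in $H_1(F_\Sigma,\Z/2\Z)$, and a kink only shifts the fiber coordinate. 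For example, on the torus the embedded curves in the classes $(1,0)$ and $(1,2)$ agree in $H_1(T^2,\Z/2\Z)$ and have equal tangential lifts in $H_1(F_{T^2},\Z/2\Z)\cong(\Z/2\Z)^3$, yet they lie in distinct free homotopy classes and no sequence of your moves joins them. So the comparison of $\vec a$ with an embedded reference curve $c$ is not established; your parenthetical mention of band sums points at the missing ingredient, but the argument as written rests on the incorrect $\Z/2\Z$ version of Hirsch--Smale.

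The paper takes a different route that sidesteps this. It orients $a$ and \emph{smooths} each transverse double point in the orientation-compatible way. Smoothing a node preserves both $\vec a$ and $[a]$ (the tangent directions are locally unchanged) and changes the number of connected components by $\pm 1$; after smoothing all $m(a)$ nodes one obtains a disjoint union $a'$ of embedded circles whose component count has the same parity as $1+m(a)$. The same is done to $b$, and the statement is then reduced to the embedded, possibly disconnected case, which is exactly Johnson's Theorem~1A. Smoothing is precisely the extra move your scheme lacks: it alters the underlying free homotopy class while keeping $\vec{\,\cdot\,}$ fixed, which is what allows one to reach a canonical embedded representative.
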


\begin{proof}
First, orient $a$ and $b$. Then for each node on $a$ or $b$, there is only one way to smoothen it while respecting the orientations. Moreover, if $a_1$ is obtained from $a$ by smoothening some nodes and $a_2$ is obtained from $a_1$ by smoothening one node $n$ on $a_1$, then on one hand, $\vec{a} = \vec{a_1} = \vec{a_2}$ in $H_1(F_{\Sigma},\Z/2\Z)$. On the other hand, if $n$ is at the intersection of two different components of $a_1$, then $a_2$ has one less component than $a_1$; if $n$ is an autointersection point of one of the components of $a_1$, then $a_2$ has one more component than $a_1$.

Thus, if $a'$ and $b'$ are the curves obtained from $a$ and $b$ after smoothening all the nodes, we have $\vec{a} = \vec{a'}$ and $\vec{b} = \vec{b'}$ in $H_1(F_{\Sigma},\Z/2\Z)$. Moreover, $a'$ (resp. $b'$) is the reunion of $m_1$ (resp. $m_2$) smooth simple closed curves and we have $m_1 = m(a)\mod 2$ and $m_2 = m(b)\mod 2$. So, using Theorem 1A in \cite{Johnson},
\[
\vec{a'} = \vec{b'} \text{ in } H_1(F_{\Sigma},\Z/2\Z) \Leftrightarrow a' = b'\text{ in } H_1(\Sigma,\Z/2\Z),\, \text{ and } m(a) = m(b)\mod 2,
\]
which ends the proof of this Lemma.
\end{proof}

Suppose $a:S^1\rightarrow \Sigma$ is a smooth immersed curve whose only singularities are transverse double points. The inclusion of vector bundles $d a : T S^1 \rightarrow a^* T \Sigma$ induces a $Spin$ structure on $a^* T \Sigma$. Indeed, fix an orientation of $T S^1$. This gives a framing of $a^*T \Sigma$ and thus an isomorphism between the oriented frame bundle of $a^* T\Sigma$ and the trivial bundle $SO(2)\times S^1$. The latter admits a natural $Spin$ structure that we can pullback on the former. The obtained $Spin$ structure does not depend on the choice of orientation on $S^1$ we made. Let us denote it by $\zeta_a$, and the other one by $-\zeta_{a}$. We have the following.

\begin{lemma}\label{pintriv}
  Let $(\Sigma,g)$ be a compact and oriented Riemann surface and $\gamma\in H_1(\Sigma,\Z/2\Z)$. Then there exists a $Spin$ structure $\zeta_{\gamma}$ on $\Sigma$ such that for any smooth immersed curve $a:S^1\rightarrow \Sigma$ in the class $\gamma$ whose only singularities are transverse double points, the pullback structure $a^*\zeta_{\gamma}$ coincides with $(-1)^{m(a)+1}\zeta_a$.

The structure $\zeta_{\gamma}$ is not unique, but if $\zeta'_{\gamma}$ satisfies the same conditions, then for any smooth curve $c: S^1\rightarrow \Sigma$ in the class $\gamma$, the pullbacks of those two structures by $c$ coincide.
\end{lemma}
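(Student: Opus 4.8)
The strategy is to translate the statement into $\Z/2\Z$-(co)homology of the oriented frame bundle $F_{\Sigma}$, where Lemma \ref{lifts} applies directly. Recall that a $Spin$ structure on $\Sigma$ (equivalently, on $T\Sigma$) is a double cover of the $SO(2)$-bundle $F_{\Sigma}$ restricting to the connected double cover on each fiber, hence is encoded by a class $\phi\in H^1(F_{\Sigma},\Z/2\Z)$ with $\langle\phi,f\rangle=1$, where $f\in H_1(F_{\Sigma},\Z/2\Z)$ denotes the fiber class; two such classes differ by the pullback of an element of $H^1(\Sigma,\Z/2\Z)$. Since $\Sigma$ is closed and orientable, the Euler number $\chi(\Sigma)$ of $F_{\Sigma}\to\Sigma$ is even, so with $\Z/2\Z$ coefficients the Gysin sequence splits: $f\neq 0$, and $\ker(\pi_*:H_1(F_{\Sigma},\Z/2\Z)\to H_1(\Sigma,\Z/2\Z))=\{0,f\}$. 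The first point I would record is the pullback formula: for an immersed $a:S^1\to\Sigma$ with only transverse double points, the tangent framing is a section of $a^*F_{\Sigma}$ which maps to the tangential lift $\vec{a}$ in $F_{\Sigma}$, and by construction $\zeta_a$ is precisely the $Spin$ structure on $a^*T\Sigma$ along which that section lifts; hence for any $Spin$ structure $\zeta$ on $\Sigma$ one has $a^*\zeta=\zeta_a$ if and only if $\langle\phi_\zeta,\vec{a}\rangle=0$. So the desired identity $a^*\zeta_\gamma=(-1)^{m(a)+1}\zeta_a$ is equivalent to $\langle\phi_{\zeta_\gamma},\vec{a}\rangle\equiv m(a)+1\pmod 2$.

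Next I would construct $\zeta_\gamma$. Fix one immersed connected curve $a_0:S^1\to\Sigma$ with only transverse double points representing $\gamma$ — a small embedded circle when $\gamma=0$. For any other such curve $b$ in the class $\gamma$, Lemma \ref{lifts} gives $\vec{b}=\vec{a_0}$ exactly when $m(b)\equiv m(a_0)$; and when $m(b)\not\equiv m(a_0)$ the difference $\vec{b}-\vec{a_0}$ lies in $\ker\pi_*$ (both project to $\gamma$) and is nonzero, hence equals $f$. In all cases $\vec{b}=\vec{a_0}+(m(b)+m(a_0))f$, so any $\phi$ with $\langle\phi,f\rangle=1$ satisfies $\langle\phi,\vec{b}\rangle\equiv m(b)+1$ for every $b$ in the class $\gamma$ as soon as it does so for $a_0$. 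It remains to exhibit such a $\phi$: if $\gamma\neq 0$ then $\vec{a_0}$ and $f$ are linearly independent in $H_1(F_{\Sigma},\Z/2\Z)$, so by perfectness of the $\Z/2\Z$-pairing we may prescribe $\langle\phi,f\rangle=1$ and $\langle\phi,\vec{a_0}\rangle=m(a_0)+1$ independently; if $\gamma=0$ and $a_0$ is a small circle then $\vec{a_0}=f$ and $m(a_0)=0$, so both requirements collapse to $\langle\phi,f\rangle=1$, which again has a solution. This $\phi$ defines $\zeta_\gamma$, and the reduction of the previous paragraph shows it has the stated property. (Equivalently, one is just asking for a $Spin$ structure whose associated Johnson quadratic form on $H_1(\Sigma,\Z/2\Z)$ vanishes on $\gamma$, and such a structure plainly exists.)

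For the last assertion, suppose $\zeta'_\gamma$ also satisfies the conclusion. Then $\phi_{\zeta_\gamma}$ and $\phi_{\zeta'_\gamma}$ agree on $f$ and on $\vec{a_0}$, so their difference is $\pi^*\alpha$ for some $\alpha\in H^1(\Sigma,\Z/2\Z)$, and $\langle\alpha,\gamma\rangle=\langle\pi^*\alpha,\vec{a_0}\rangle=0$. Hence for any smooth loop $c:S^1\to\Sigma$ in the class $\gamma$, the $Spin$ structures $c^*\zeta_\gamma$ and $c^*\zeta'_\gamma$ differ by $c^*\alpha\in H^1(S^1,\Z/2\Z)$, and $\langle c^*\alpha,[S^1]\rangle=\langle\alpha,\gamma\rangle=0$, so they coincide; non-uniqueness of $\zeta_\gamma$ itself is then clear, since $\phi+\pi^*\alpha$ works for every $\alpha$ with $\langle\alpha,\gamma\rangle=0$.

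The only genuinely non-formal ingredient is Lemma \ref{lifts}; everything else is homological bookkeeping around the fiber class. The step I expect to require the most care is making sure the normalization $(-1)^{m(a)+1}$ is the one compatible with the convention that fixes $\zeta_a$ — that is, that the constant in the pullback formula $a^*\zeta=\zeta_a\iff\langle\phi_\zeta,\vec{a}\rangle=0$ is $0$ and not $1$ — together with checking that the two linear conditions imposed on $\phi$ really are simultaneously solvable in the degenerate case $\gamma=0$; both are taken care of by the choice of $a_0$ above and by Lemma \ref{lifts}.
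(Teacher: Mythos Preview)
Your proof is correct and follows essentially the same route as the paper: both identify a $Spin$ structure with a class $\phi\in H^1(F_\Sigma,\Z/2\Z)$ nonvanishing on the fiber (Johnson's description), translate the required restriction property into the condition $\langle\phi,\vec{a}\rangle\equiv m(a)+1$, and then invoke Lemma~\ref{lifts} to see that this condition depends only on the class $\gamma$ and can be satisfied. Your version is more explicit---you spell out the Gysin-sequence computation of $\ker\pi_*$, the formula $\vec b=\vec{a_0}+(m(b)+m(a_0))f$, and the degenerate case $\gamma=0$---whereas the paper compresses all of this into a single appeal to Lemma~\ref{lifts}; but the underlying argument is the same.
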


\begin{proof}
  We can describe a $Spin$ structure on $\Sigma$ as an element of $H^1(F_{\Sigma},\Z/2\Z)$ with the only condition that it is non zero on the tangential framing of the boundary of a disc in $\Sigma$ (see \cite{Johnson}). Moreover, by definition, the $Spin$ structure takes value $1$ on a loop if and only if this loop cannot be lifted to the $Spin(2)$-principal bundle.

Fix an immersed curve $a:S^1\rightarrow \Sigma$ as in the statement. The $Spin$ structure $(-1)^{m(a)+1}\zeta_{a}$ is the structure where the tangential framing of $a^* T\Sigma$ can be lifted if and only if $m(a)+1$ is even. Thus, by Lemma \ref{lifts}, we can find a $Spin$ structure $\zeta_{\gamma}$ on $\Sigma$ which restricts to $(-1)^{m(a)+1}\zeta_a$ on $a$. Then, again by Lemma \ref{lifts}, this $Spin$ structure restricts to $(-1)^{m(b)+1}\zeta_b$ for all immersed curve smooth immersed curve $b:S^1\rightarrow \Sigma$ in the same class as $a$ and whose only singularities are transverse double points. 

The second part of the lemma is immediate.
\end{proof}

We can now resume our reasoning.

\begin{prop}\label{detd}
  The choice of a real oriented $Spin$ structure on $(TX,dc_X)$ naturally orients the line bundle $\ddet(D)$. Choosing the other real oriented $Spin$ structure gives the other orientation for $\ddet(D)$. Moreover, the action of $\mobsp$ on the orientations of $\ddet(D)$ is trivial.
\end{prop}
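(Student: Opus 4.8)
The plan is to trivialize the family of determinant lines fiberwise, using the standard model $(\TC^2,c_{\TC})$ and the reference data fixed in \S\ref{rec}, and to keep track of precisely the discrete ambiguities catalogued in Lemma \ref{recall}. Fix $(u,J)\in\rhmdo$ and set $N=u^*TX$ with the real structure $c_N$ induced by $dc_X$, so $D_{u,J}$ is a real generalized Cauchy--Riemann operator on $(N,c_N)$ over $(S,c_u)$. Since $c_1(X,\Omega)=0$ we have $\deg N=c_1(X,\Omega)d=0$, and since an oriented real $Spin$ structure $\ssp$ on $(TX,dc_X)$ orients $\R X$ by Proposition \ref{orientspin}, the bundle $\R N=(u|_{\R S})^*T\R X$ has $w_1(\R N)=0$ whenever $\R S\neq\emptyset$; hence by the classification recalled in \S\ref{rec} the pair $(N,c_N)$ is isomorphic to $(\TC^2,c_{\TC})$ in all cases. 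Because the space $\rop$ of real Cauchy--Riemann operators on $(N,c_N)$ is contractible and $\Det$ is orientable over it, an orientation of $\ddet(D_{u,J})$ amounts to an orientation of $\ddet(\DB_{\TC}\oplus\DB_{\TC})$ transported through any bundle isomorphism $\Psi\colon(N,c_N)\to(\TC^2,c_{\TC})$; the task is to show the natural constraints on $\Psi$ coming from $\ssp$ pin this orientation down, that it varies continuously over $\rhmdo$, that it reverses when $\ssp$ is replaced by $\ssp'$, and that $\mobsp$ acts trivially. As in \S\ref{prooffirst} I would treat $\rhmdoe$ and $\rhmdon$ separately.

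On $\rhmdoe$ the real part $\R S$ is empty, so by Lemma \ref{recall} the group $\rautt$ has only the two components of $\id$ and $r$, with $r$ reversing both the orientation of $\Dett$ and the oriented real $Spin$ structure of $(\TC^2,c_{\TC})$. Pulling back $\ssp$ gives an oriented real $Spin$ structure on $(N,c_N)$; requiring $\Psi$ to match it with the fixed one on $(\TC^2,c_{\TC})$ determines $\Psi$ up to the identity component of $\rautt$, hence transports a well-defined orientation of $\ddet(\DB_{\TC}\oplus\DB_{\TC})$ to $\ddet(D_{u,J})$. This is continuous in $(u,J)$ because $\Psi$ can be chosen locally continuously and all identifications of \S\ref{prooffirst} are natural. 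Replacing $\ssp$ by $\ssp'$ composes $\Psi$ with $r$, hence reverses the orientation. Finally, since $\mobsp$ is connected when $\R S=\emptyset$ (Lemma \ref{recall}), its action lifts to a family of automorphisms of $(N,c_N)$ through the identity and so acts trivially on orientations of $\ddet(D)$.

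On $\rhmdon$ the real part is a circle and, by Lemma \ref{recall}, $\pi_0(\rautt)$ is generated by $a$, $m$ and $t$: here $m$ reverses the oriented real $Spin$ structure (and the orientation of $\Dett$) just as $r$ did, $a$ preserves the oriented real $Spin$ structure and acts trivially on $\Dett$, while $t$---being homotopic to the identity through bundle automorphisms of $(\TC^2,c_{\TC})$---also preserves the oriented real $Spin$ structure but reverses the orientation of $\Dett$ and the $Pin^{\pm}$ structure of $\TR^2$. Thus matching $\ssp$ alone leaves a residual $t$-ambiguity, and to kill it one must additionally require $\Psi|_{\R N}$ to preserve a distinguished $Pin^{\pm}$ structure on $\R N$, which $a$ and $m$ respect but $t$ does not. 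The distinguished structure is built as follows: for a generic $(u,J)\in\rhmdon$ the map $u$ is an immersion and the real curve $\R C=u(\R S)$ has only transverse double points in the oriented surface $\R X$, so applying Lemma \ref{pintriv} to $\Sigma=\R X$ and $\gamma=[\R C]\in H_1(\R X,\Z/2\Z)$ produces a $Spin$ structure $\zeta_{[\R C]}$ on $\R X$ whose pullback to $\R S$ equals $(-1)^{m(\R C)+1}\zeta_{u|_{\R S}}$, a concrete $Spin$---hence, $\R N$ being trivial, $Pin^{\pm}$---structure on $\R N$. One checks that this is exactly the structure obtained by restricting $\ssp$, so the resulting orientation of $\ddet(D_{u,J})$ depends only on $\ssp$, reverses under $\ssp\mapsto\ssp'$ via the $m$-component, and varies continuously on the immersed locus because $[\R C]$ and $m(\R C)\bmod 2$ are locally constant there; since the immersed curves are open and dense and orientability of $\ddet(D)$ is tested on loops, which can be homotoped into this locus, the orientation extends to all of $\rhmdon$. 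For $\mobsp$-invariance, $\mobsp$ now has the two components of $\id$ and the hemisphere swap $h$; lifting $h$ to the automorphism of type $a$ of $(N,c_N)$ and using that $a$ acts trivially on $\Dett$ and preserves the oriented real $Spin$ and $Pin^{\pm}$ structures (Lemma \ref{recall}) shows the construction is unchanged.

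The main obstacle I anticipate is the $\rhmdon$ case, and within it the comparison step: checking that the $Pin^{\pm}$ structure on $\R N$ cut out by $\ssp$ coincides with the tangential-framing structure $(-1)^{m(\R C)+1}\zeta_{u|_{\R S}}$ supplied by Lemma \ref{pintriv}, and that this is precisely the data needed to annihilate the $t$-ambiguity coherently in families. Everything else---contractibility of $\rop$, the classification of $(N,c_N)$, and the bookkeeping of $\pi_0(\rautt)$---is either recalled in \S\ref{rec} or an immediate consequence of Lemma \ref{recall}, and the statements about $\ssp'$ and about $\mobsp$ fall out of that bookkeeping.
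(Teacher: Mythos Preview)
Your outline is essentially the paper's proof: trivialize $(u^*TX,dc_X)$ by $(\TC^2,c_{\TC})$, pin down the trivialization using the oriented real $Spin$ structure (and, when $\R S\neq\emptyset$, an additional $Spin$ structure on the real part), then invoke Lemma \ref{recall} for well-definedness and $\mobsp$-invariance. The empty real part case matches the paper verbatim.

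The one substantive discrepancy is in how you handle the $t$-ambiguity on $\rhmdon$. You build the distinguished $Spin$ structure on $\R N$ via Lemma \ref{pintriv}, exactly as the paper does, but then you assert---and flag as your main obstacle---that this structure must be ``exactly the structure obtained by restricting $\ssp$''. That comparison is neither made nor needed in the paper, and it is not clear it even has a meaning: $\ssp$ is a real $Spin$ structure on the rank-$4$ bundle $TX$ and does not canonically restrict to a $Spin$ structure on the rank-$2$ bundle $T\R X$. The point is simpler. The second clause of Lemma \ref{pintriv} says that for \emph{any} smooth curve $c:S^1\to\R X$ in the class $\gamma$, the pullback $c^*\zeta_\gamma$ is independent of which $\zeta_\gamma$ one chose; hence $u|_{\R S}^*\zeta_{[u(\R S)]}$ is a \emph{canonical} $Spin$ structure on $\R N$, defined for every $(u,J)\in\rhmdon$ with no genericity assumption. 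One then imposes two \emph{independent} constraints on $\Psi$---matching the oriented real $Spin$ structure coming from $\ssp$, and matching this canonical $Spin$ structure on $\R N$---which by Lemma \ref{recall} kill the $m$- and $t$-ambiguities respectively. In particular you do not need to restrict to immersed curves with nodal real locus and then extend by density; the construction is global from the start. Your worry about the comparison step is a phantom, and once you drop it the argument is complete and identical to the paper's.
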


\begin{proof}
\textbf{\textbullet Curves with empty real part.} First, fix a curve $(u,J)\in\rhmdoe$ and let us orient $\ddet(D_{(u,J)})$. Consider as in \S \ref{rec} the trivial complex vector bundle $(\TC^2,c_{\TC})$ of rank $2$ on $(S,c_S)$. It is equipped with the oriented real $Spin$ structure defined at the end of \S \ref{rec}. Since the bundle $(u^*TX,d c_X)$ has rank two and vanishing first Chern class, it is isomorphic to $(\TC^2,c_{\TC})$. Choose such an isomorphism $f : \TC^2 \rightarrow u^*TX$ pulling back the oriented real $Spin$ structure given on $u^*TX$ to the one fixed on $\TC^2$ (using Lemma \ref{recall}). The orientation we fixed on $\Dett$ at the end of \S \ref{rec} gives an orientation of $\ddet(f^*D_{(u,J)})$ and thus an orientation of $\ddet(D)_{(u,J)}$ via $f$.

This orientation does not depend on the choice of $f$. Indeed another choice will differ from $f$ by an automorphism of $(\TC^2,c_{\TC})$ preserving the oriented real $Spin$ structure. But the group of all such automorphisms is connected (see Lemma \ref{recall}), so they act trivially on the orientations of $\Dett$. Moreover, the orientation we obtain depends continuously on $(u,J)$, hence orienting the bundle $\ddet(D)$ over $\rhmdoe$. From Lemma \ref{recall}, we note that taking the other oriented real $Spin$ structure on $(X,c_X)$ gives the other orientation of $\ddet(D)$.

\textbf{\textbullet Curves with non-empty real part.} We must now consider the case of the curves with non-empty real part. Fix an auxiliary metric on $\R X$, and for each class $\gamma\in H_1(\R X,\Z/2\Z)$ take a $Spin$ structure $\zeta_{\gamma}$ as given in Lemma \ref{pintriv}. Take a curve $(u,J)\in\rhmdon$. We proceed exactly as in the previous case, except that now $\TR^2$ is also equipped with the $Spin$ structure defined at the end of \S \ref{rec} and we require that $f$ pulls back the oriented real $Spin$ structure given on $u^*TX$ and the $Spin$ structure $\zeta_{u(\R S)}$ on $u^*T\R X$ to the corresponding fixed structures on $\TC^2$ and $\TR^2$ (using Lemma \ref{recall}). We then orient $\ddet(D)_{(u,J)}$ using $f$, and again by Lemma \ref{recall}, the resulting orientation does not depend on the choice of $f$. Lemma \ref{pintriv} also implies that other choices of $Spin$ structure on $\R X$ give the same $Spin$ structure on $u^*T\R X$, and hence the same orientation on $\ddet(D)_{(u,J)}$. Again, this gives an orientation of $\ddet(D)$ over $\rhmdon$, and taking the other oriented real $Spin$ structure on $(X,c_X)$ gives the other orientation for $\ddet(D)$.

In both cases, it follows from Lemma \ref{recall} that the group $\mobsp$ acts trivially on the fixed orientations.
\end{proof}

\begin{proof}[Proof of Theorem \ref{first}]
  The proof now follows from the Propositions \ref{dbj} and \ref{detd}.
\end{proof}

\subsection{K3 surfaces - continued}\label{k3parc}

We continue what we started in \S \ref{k3par}. Take a real $K3$ surface $(X,I, c_X)$ and $d\in H_2(X,\Z)$ a primitive class such that $(c_X)_* d = -d$. Suppose that $d$ is in fact the class of a linear system $h$ on $X$ such that all the rational curves in $h$ are nodal. As we saw in \S \ref{k3par}, the invariant $\chi_d(X,I,c_X)\in\Z$ counts the real rational curves in $h$ with some signs. Using the results of \S \ref{prooffirst} we can describe these signs more explicitely (see Proposition \ref{signk3}).

Take a Kähler form on $(X,I)$ such that $c_X$ is anti-symplectic for it and denote by $g = \omega(.,I.)$ the associated Kähler metric. Fix a non-zero element $v$ of $H^0(X,K_X)_{+1}$ and denote by $\ssp$ the associated oriented real $Spin$ structure on $(TX, d c_X)$. As we explained in \S \ref{k3par}, we can associate to all this data a loop $\Omega_{\omega,I}^{\ssp}$ of symplectic forms and a loop $\gamma_{\omega,I}^{\ssp} : S^1\rightarrow \R \mathcal{J}^l_{\Omega_{\omega,I}^{\ssp}}$ of complex structures which is transverse to the projection $\pi : \R\mathcal{M}^d(X,\Omega_{\omega,I}^{\ssp})\rightarrow \R \mathcal{J}^l_{\Omega_{\omega,I}^{\ssp}}$ and such that the fiber product $\R\mathcal{M}^d(X,\Omega_{\omega,I}^{\ssp})\fp{\pi}{\gamma_{\omega,I}^{\ssp}} S^1$ consists only of $I$-holomorphic curves (see Proposition \ref{twistortr}). Using Proposition \ref{fporient} and Theorem \ref{first} we obtain for each $u\in \R\mathcal{M}^d(X,\Omega_{\omega,I}^{\ssp})\fp{\pi}{\gamma_{\omega,I}^{\ssp}} S^1$ a sign $\varepsilon(u)\in\{-1,+1\}$ which does not depend on the choice of oriented real $Spin$ structure. On the other hand, we denote by $m(u)$ the number of real isolated double points of $u$.

\begin{prop}\label{signk3}
 There exists a sign $\epsilon\in \{-1,+1\}$ such that for all $u\in \R\mathcal{M}^d(X,\Omega_{\omega,I}^{\ssp})\fp{\pi}{\gamma_{\omega,I}^{\ssp}} S^1$, $\varepsilon(u) = \epsilon (-1)^{m(u)+1}$.
\end{prop}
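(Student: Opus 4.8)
The plan is to unwind the explicit orientation of $\pi$ built in \S\ref{prooffirst} in the special case of an $I$-holomorphic rational curve $u$ on the $K3$ surface, and to show that the sign $\varepsilon(u)$ is governed entirely by the $Spin$-structure data on $u^*T\R X$, which by Lemma \ref{pintriv} depends only on $m(u)\bmod 2$ together with a contribution independent of $u$. First I would recall that a curve $u$ appearing in the fiber product is necessarily $I$-holomorphic (Proposition \ref{twistortr}), so that the normal bundle $N_u = u^*TX/T\cpo$ is a holomorphic line bundle of degree $-2$ carrying the induced real structure, and the cokernel of $d_u\pi$ is identified (via Corollary 2.2.3 in \cite{shev} and diagram \eqref{diagg}) with $\coker(\OB_u)$, a one-dimensional real space, namely $H^1(S,N_u)_{+1}$. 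The sign $\varepsilon(u)$ is then, by Remark \ref{imprem}, the comparison between the orientation of this line coming from the chosen oriented real $Spin$ structure $\ssp$ (through the isomorphism $\ddet(D)=\ddet(\DB_{j_0})\otimes\ddet(\OB)$ and Propositions \ref{dbj}, \ref{detd}) and the orientation coming from $d\gamma_{\omega,I}^{\ssp}$, i.e. the image of the class $\mathfrak c\in H^1(S,N_u)_{+1}$ determined by the tangent vector to $\R\Tw(\omega,I)$ at $I$.

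Next I would fix a $c_X$-invariant isomorphism $f:\TC^2\to u^*TX$ pulling back $\ssp$ to the reference oriented real $Spin$ structure on $\TC^2$ and the reference $Spin$ structure $\zeta_{u(\R S)}$ on $u^*T\R X$ to the one fixed on $\TR^2$, exactly as in the proof of Proposition \ref{detd}; under this identification the restriction $D_{u,J}$ becomes a real Cauchy–Riemann operator on $(\TC^2,c_{\TC})$ whose complex linear part one may deform to $\DB_\TC\oplus\DB_\TC$ while keeping track of the surjection onto $\coker$. The key point is that the change of $Spin$ structure on $\TR^2$ from the one compatible with the tangential framing $\vec{u(\R S)}$ of the immersed real locus to the fixed $Spin(2)$-reference is measured precisely by $(-1)^{m(u)+1}$, by Lemma \ref{pintriv}; this is where the factor $(-1)^{m(u)+1}$ enters. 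All the remaining data — the orientation of $\ddet(\DB_i)$ fixed by the chosen basis $v_1,v_2,v_3$ of $\ker(\DB_i)$, the orientation of $\Dett$ fixed by $(1,0)\wedge(0,1)$, the sign with which $d\gamma_{\omega,I}^{\ssp}$ hits $\coker$ once a framing compatible with the tangential framing of $u(\R S)$ is used, and the homology class $u(\R S)\in H_1(\R X,\Z/2\Z)$ together with the auxiliary structures $\zeta_\gamma$ — contribute a sign $\epsilon$ which, crucially, does not depend on the individual curve $u$: two such curves in the class $d$ have real loci in the same $H_1(\R X,\Z/2\Z)$-class (both being null-homologous since they bound the complex locus, or more precisely by the argument that $u(S)$ with its real structure has the same topological type for all $u$ in $h$), so $\zeta_{u(\R S)}$ is independent of $u$ up to the already-accounted-for $(-1)^{m(u)+1}$.

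Thus I would conclude $\varepsilon(u)=\epsilon\,(-1)^{m(u)+1}$ with $\epsilon$ the universal sign assembled from the reference choices. The main obstacle is the bookkeeping in the middle step: one must carefully track how the isomorphism $\ddet(D_u)=\ddet(\DB_{j_0})\otimes\ddet(\OB_u)$ intertwines the chosen orientations, and verify that the only place where $u$-dependent information survives is the $Pin^\pm$/$Spin$ discrepancy on the real locus, captured by $m(u)\bmod 2$ via Johnson's theorem (Theorem 1A in \cite{Johnson}) as packaged in Lemmas \ref{lifts} and \ref{pintriv}; in particular one must check that the class $u(\R S)\in H_1(\R X,\Z/2\Z)$ is the same for all $u$ counted, which follows because all the curves lie in the fixed linear system $h$ and are isotopic as real curves through the family $\gamma_{\omega,I}^{\ssp}$, so their real loci represent a fixed homology class — and on that class the auxiliary $Spin$ structure $\zeta_{u(\R S)}$ is, by the second part of Lemma \ref{pintriv}, well-defined independently of the representative.
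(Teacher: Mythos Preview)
Your framework is correct — the sign $\varepsilon(u)$ is indeed the comparison between the orientation of $\coker(\OB_u)$ coming from Propositions \ref{dbj} and \ref{detd} and the orientation coming from $d_0\gamma_{\omega,I}^{\ssp}$, and the factor $(-1)^{m(u)+1}$ does enter through Lemma \ref{pintriv} exactly as you say. But there is a genuine gap in the middle step.

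You assert that ``the sign with which $d\gamma_{\omega,I}^{\ssp}$ hits $\coker$ once a framing compatible with the tangential framing of $u(\R S)$ is used \ldots\ does not depend on the individual curve $u$''. This is precisely the content of the proposition, and you have not proved it. Different curves $u$ have different normal bundles $N_u$ as real holomorphic line bundles, and a priori nothing prevents $d_0\gamma_{\omega,I}^{\ssp}$ from landing on opposite sides of the reference orientation for different $u$. The paper resolves this by exploiting the specific geometry of the twistor family: it uses the fixed holomorphic $2$-form $v\in H^0(X,K_X)_{+1}$ to build an explicit $\Z/2\Z$-equivariant isomorphism $f:N_u\to K_{\cpo}$, $[\xi]\mapsto v(du(\cdot),\xi)$, which transports the $u$-dependent object $\coker(\OB_u)$ to the universal line $H^1(\cpo,K_{\cpo})_{+1}$, and then computes directly that $f\circ d_0\gamma_{\omega,I}^{\ssp}(t)$ evaluates on $(x,ix)$ to $t\lambda^{-1}g(du(x),du(x))>0$, using the defining relation $g(J\cdot,\cdot)=\lambda\,\Re(v(\cdot,\cdot))$ from \S\ref{k3par}. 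This positivity, valid for every $u$, is what makes $\epsilon$ universal. Your proposal contains no analogue of this computation, and without it the argument does not close.

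Your fallback attempt to argue $u$-independence via the homology class of the real locus also has problems. The claim that $u(\R S)$ is null-homologous ``since it bounds the complex locus'' is not a valid argument (the complex half of the curve lies in $X$, not in $\R X$), and the claim that all curves in the fiber product are ``isotopic as real curves through the family $\gamma_{\omega,I}^{\ssp}$'' is unjustified: by Proposition \ref{twistortr} they are all $I$-holomorphic, hence sit over a single point of $S^1$, and are not connected by any path in the fiber product. In any case, even if the $H_1(\R X,\Z/2\Z)$-class were constant, this would only pin down the auxiliary structure $\zeta_{u(\R S)}$ and not the sign of $d_0\gamma_{\omega,I}^{\ssp}$ itself; the latter genuinely requires the computation with $v$.
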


We obtain Theorem \ref{samekr} as a corollary of Proposition \ref{signk3}. Note that we do not do the computation of $\epsilon$ as it boils down to a matter of conventions choices.

\begin{proof}
First, we define the sign $\epsilon$. To that end, consider the real curve $(\cpo,i,c)$, where $c$ can be $c_{\rpo}$ or $c_{\emptyset}$. The canonical bundle $K_{\cpo}$ comes with a real structure $(d c)^t$ and we denote by $H^1(\cpo,K_{\cpo})_{+1}$ the $1$-dimensional $+1$ eigenspace of $(d c_X)^t$. The holomorphic structure on $K_{\cpo}$ is given by an injective real Cauchy-Riemann operator which we denote by $\DB_i^*$, and we have $\coker(\DB_i) = H^1(\cpo,K_{\cpo})_{+1}$. Thus orienting the line $\ddet(\DB_i^*)$ is the same as orienting $H^1(\cpo,K_{\cpo})_{+1}$. Let us describe two ways of doing so.

The first way is to note that there is a linear isomorphism $H^1(\cpo,K_{\cpo})_{+1}\rightarrow i\R$ given by integrating the $(1,1)$-forms in $H^1(\cpo,K_{\cpo})_{+1}$ over $\cpo$.

The second way is to consider the exact sequence
\[
0\rightarrow T\cpo \rightarrow T\cpo\oplus K_{\cpo} \rightarrow K_{\cpo}\rightarrow 0
\]
of holomorphic bundles. This gives an isomorphism $\ddet(\DB_i^*) = \ddet(\DB_i)^*\otimes \ddet(\DB_i\oplus\DB_i^*)$. On the one hand we orient $\ddet(\DB_i)$ as in Proposition \ref{dbj}. On the other hand, the line bundle $\det(T\cpo\oplus K_{\cpo})$ admits a natural non-vanishing section which provides an oriented real $Spin$ structure on $(T\cpo\oplus K_{\cpo},d c\oplus (d c)^t)$. When the real part of the curve is non-empty, we take the $Spin$ structure $\zeta_{K_{\cpo}}$ on $\R (T\cpo\oplus K_{\cpo})$ to be the one induced by the direct sum decomposition. Thus, we can orient $\ddet(\DB_i\oplus\DB_i^*)$ in the same way as in Proposition \ref{detd}.

We let $\epsilon$ be $+1$ if the two previous orientations coincide and $-1$ otherwise.

Now, take $u\in \R\mathcal{M}^d(X,\Omega_{\omega,I}^{\ssp})\fp{\pi}{\gamma_{\omega,I}^{\ssp}} S^1$. By assumption, $u$ is a $\Z/2\Z$-equivariant holomorphic immersion, so the quotient $N_u = u^*TX/T\cpo$ is an holomorphic line bundle of degree $-2$, which is equipped with a real structure $c_N$. Let us denote by $\OB_u$ the associated real Cauchy-Riemann operator on $N_u$. As we mentioned in \S \ref{prooffirst}, it follows from Corollary 2.2.3 in \cite{shev} and Proposition 1.9 in \cite{wel1} that $\ker(d_u\pi) = \{0\}$ and $\coker(d_u\pi) = \coker(\OB_u)$. Thus, the sign $\varepsilon(u)$ is determined by looking at the isomorphism $d_0\gamma_{\omega,I}^{\ssp} : t \in T_0 S^1\mapsto t JI\circ d u\circ i\in \coker(\OB_u)$, where $\gamma_{\omega,I}^{\ssp}(0) = I$ and $J$ is the complex structure defined by $g$ and $v$ in \S \ref{k3par} (see the proof of Corollary 2.2.3 in \cite{shev}).

On the other hand, the map 
\[
f : [\xi]\in N_u = u^*TX/T\cpo \mapsto v(du(.),\xi)\in K_{\cpo}
\]
 is a $\Z/2\Z$-equivariant isomorphism between the holomorphic line bundles $(N_u,c_N)$ and $(K_{\cpo},(d c)^t)$. Moreover, the sequence
\[
0\rightarrow T\cpo \rightarrow u^*TX \rightarrow N_u\rightarrow 0
\]
splits. Let us choose a section $s : N_u\rightarrow u^*TX$ and define the isomorphism
\[
F : (x,s(y))\in u^*TX = T\cpo\oplus N_u \mapsto (x,f(s(y)))\in T\cpo\oplus K_{\cpo}.
\]
By construction, the isomorphism $F$ preserves the oriented real $Spin$ structures fixed on the two bundles; when the real part of the curve is non-empty, we use Lemma \ref{pintriv} to see that $F$ sends the $Spin$ structure $\zeta_{u(\rpo)}$ to the $Spin$ structure $(-1)^{m(u)+1}\zeta_{K_{\cpo}}$. Thus, pulling back by $f$ the first orientation on $\ddet(\DB^*_i)$, we obtain the orientation on $\ddet(\OB_u)$ defined by Propositions \ref{dbj} and \ref{detd} if and only if $\epsilon (-1)^{m(u)+1} = 1$. 

We can now explicitely evaluate the sign of $d_0\gamma_{\omega,I}^{\ssp}$. Indeed, we have
\begin{equation*}
\begin{split}
f\circ d_0\gamma_{\omega,I}^{\ssp}(t) & = t v(du(.),JI d u (i.))\\
 & = - t v(du(.), Jdu (.)),
\end{split}
\end{equation*}
and evaluating this $(1,1)$-form on a pair $(x,ix)$ of non-zero tangent vectors to $\cpo$, we obtain
\begin{equation*}
  \begin{split}
    t\Im(-v(du(x),Jdu(ix))) & =t\Im(iv(du(x),Jdu(x)))\\
 & = t\Re(v(du(x),Jdu(x)))\\
 & =t \frac{1}{\lambda}g(du(x),du(x)),
  \end{split}
\end{equation*}
where $\lambda>0$ was defined in \S \ref{k3par}. In particular, $f\circ d_0\gamma_{\omega,I}^{\ssp}$ always preserves the orientations. Thus, $d_0\gamma_{\omega,I}^{\ssp}$ preserves the orientations if and only if $\epsilon (-1)^{m(u)+1} = 1$, which proves the proposition.
\end{proof}

\subsection{Contribution of the reducible curves}\label{proofsecond}

\subsubsection{Black box around a reducible curve}\label{blackb}

Let $(X,\Omega,c_X)$ be a one-parameter family of real $Spin$ symplectic $K3$ surfaces and $d\in H_2(X,\Z)$ with $(c_X)_* d = -d$. Fix an oriented real $Spin$ structure $\ssp$ on $(TX,d c_X)$. Let $\gamma, \gamma' : S^1 \rightarrow \rjo$ two generic maps in the sense of Lemma \ref{finite} and which are homotopic. In \S \ref{mainsect} we defined the integers $\chi^{\ssp}_d(X,\Omega,c_X;\gamma)$ and $\chi^{\ssp}_d(X,\Omega,c_X;\gamma')$. Let $\cyl = S^1\times [0,1]$ and choose a smooth homotopy $\delta : \cyl\rightarrow \rjo$ such that $\delta(.,0) = \gamma$ and $\delta(.,1) = \gamma'$. Let us denote by $\rmdod$ the fiber product $\rmdo\fp{\pi}{\delta} \cyl$. If $\delta$ is generic, then Proposition \ref{fporient} and Theorem \ref{first} imply that $\rmdod$ is a smooth and oriented $1$-dimensional manifold with boundary $\rmdo\fp{\pi}{\gamma} S^1 \cup \rmdo \fp{\pi}{\gamma'} S^1$. Moreover, the orientation of $\rmdo \fp{\pi}{\gamma} S^1$ (resp. $\rmdo \fp{\pi}{\gamma'} S^1$) given by Proposition \ref{fporient} and Theorem \ref{first} agrees with (resp. in not the same as) the orientation it inherits from $\rmdod$. However, since $\rmdod$ is not compact in general, we cannot apply directly Stokes' Theorem to conclude on the equality of $\chi^{\ssp}_d(X,\Omega,c_X;\gamma)$ and $\chi^{\ssp}_d(X,\Omega,c_X;\gamma')$.

Denote by $\crmdod$ the Gromov compatification of $\rmdod$, that is the closure of $\rmdod$ in the space of stable real rational curves of degree $d$ for the Gromov topology. It is a compact topological space and the projection $\pi : \rmdod\rightarrow \cyl$ naturally extends to $\crmdod$ as a continuous map.

\begin{lemma}\label{fincomp}
  If the homotopy $\delta : \cyl\rightarrow \rjo$ between $\gamma$ and $\gamma'$ is generic enough, then the set of critical points of $\pi: \rmdod\rightarrow \cyl$ has no accumulation point.
\end{lemma}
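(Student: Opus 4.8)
The plan is to prove the slightly stronger statement that, for a generic homotopy $\delta$, the map $\pi : \rmdod\to\cyl$ is an immersion; it then has no critical point at all, so its critical set certainly has no accumulation point. The first step is to identify which points are critical. At a point $(C,(t,s))\in\rmdod$, once $\delta$ is transverse to $\pi$ — which, by Sard--Smale, one may assume at every point — the tangent space $T_{(C,(t,s))}\rmdod$ is the one-dimensional kernel of $d_C\pi\oplus -d_{(t,s)}\delta$, and $d\pi$ acts on it by the projection $(v,w)\mapsto w$. Hence $(C,(t,s))$ is critical if and only if $\ker(d_C\pi)\neq\{0\}$; since $\pi$ is Fredholm of index $-1$, this means $\dim_\R\ker(d_C\pi)=1$ and $\dim_\R\coker(d_C\pi)=2$, equivalently (writing $C=[u]$ with $u$ immersed, and using Corollary 2.2.3 of \cite{shev}, Proposition 1.9 of \cite{wel1} and the exact sequence \eqref{diagg}) that the real Cauchy--Riemann operator $\OB_u$ on the normal bundle $N_u$ has one-dimensional kernel.

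The second step is a dimension count. Let $\Sigma\subset\rmdo$ be the union of the locus of non-immersed curves and of the locus $\Sigma^1=\{C : \dim_\R\ker(d_C\pi)\geq 1\}$. Using the transversality machinery behind the construction of $\rmdo$ (the space $\rjo$ being large enough to realize the needed perturbations of the linearized operator $D_{u,J}$ appearing in \eqref{diagg}; see \cite{MDS}, \S 3, and \cite{shev}), one shows that $\Sigma$ is contained in a countable union of locally closed separable Banach submanifolds $\Sigma_\alpha\subset\rmdo$ of real codimension at least $2$, on each of which $\pi$ restricts to a Fredholm map of index at most $-3$: vanishing of $du$ at an unspecified point of $S$ is four real conditions against two parameters, so the non-immersed locus has codimension $2$, while the locus where an index $-1$ family of Fredholm operators has kernel of dimension $k\geq 1$ has codimension $k(k+1)\geq 2$. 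Applying the Sard--Smale theorem to every $\pi|_{\Sigma_\alpha}$, relative to the fixed (and, by hypothesis, already generic) boundary values $\gamma$, $\gamma'$, and intersecting the resulting residual sets with the one provided by Lemma \ref{finite}, one obtains a $\delta$ transverse to each $\pi|_{\Sigma_\alpha}$. Every fiber product $\Sigma_\alpha\fp{\pi}{\delta}\cyl$ is then a manifold of dimension $\ind(\pi|_{\Sigma_\alpha})+\dim\cyl\leq -3+2=-1$, hence empty. Therefore no curve of $\rmdod$ belongs to $\Sigma$: every $C\in\rmdod$ is immersed with $\ker(d_C\pi)=\{0\}$, so $\pi:\rmdod\to\cyl$ is an immersion and has no critical point, which in particular yields the lemma.

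The main obstacle is the stratification step: one must run the universal transversality argument showing that the non-immersed locus and the kernel-jump locus $\Sigma^1$ are cut out with the expected codimension $\geq 2$ inside $\rmdo$, i.e.\ that varying $J$ within $\rjo$ suffices to move the relevant finite-dimensional jets of $D_{u,J}$. This is of the same type as the transversality statements already invoked to give $\rmdo$ a Banach manifold structure, so it should present no new difficulty. If one prefers not to assert that these loci are genuine submanifolds, it is enough to place them inside a countable union of images of Fredholm maps of index $\leq -3$: the same index count still confines the critical set to a subset of negative expected dimension, and combining this with Gromov's compactness theorem over $\cyl$ — which, exactly as in the proof of Lemma \ref{finite} and using that $d$ is primitive to exclude multiply covered bubbles, bounds the number of reducible curves appearing in $\crmdod$ — one again excludes any accumulation of critical points, be it inside $\rmdod$ or onto its Gromov boundary.
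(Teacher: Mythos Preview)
Your dimension count for the non-immersed locus is wrong in the real setting, and this is a genuine gap. You write that ``vanishing of $du$ at an unspecified point of $S$ is four real conditions against two parameters, so the non-immersed locus has codimension $2$.'' That count is correct for a cusp at a complex point of $S$, but not for a cusp at a real point. At $z_0\in\R S$ the constraint $u\circ c_S=c_X\circ u$ forces $du_{z_0}$ to intertwine the two $\C$-antilinear involutions $dc_{S,z_0}$ and $dc_{X,u(z_0)}$; a $\C$-linear map $\C\to\C^2$ doing so is determined by a vector in $\R^2$, so $du_{z_0}=0$ is only two real conditions, against one real parameter for $z_0\in\R S$. Thus the locus of curves with a real cusp has codimension $1$ in $\rmdo$, giving $\pi$ restricted to it index $-2$, and for a generic two-parameter family $\delta$ such curves appear as a discrete (in general nonempty) set in $\rmdod$. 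Your claim that $\pi:\rmdod\to\cyl$ has no critical points therefore does not follow from your argument: the stratum you need to avoid is only codimension $1$, not $2$.

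The paper's proof sidesteps this entirely with an observation you do not use. When $u$ is immersed, $N_u$ is an honest complex line bundle of degree $c_1(X)d-2=-2$ on $\cpo$, and every real Cauchy--Riemann operator on a line bundle of degree $\leq -1$ is injective (automatic transversality). Hence $\ker(\OB_u)=0$ for all immersed $u$: no codimension count is needed for the kernel-jump locus at all, because for immersed curves it is empty for structural reasons. This gives directly that critical points are contained in the non-immersed locus, and then one only needs Welschinger's Proposition~2.7 to conclude that the non-immersed curves in $\rmdod$ have no accumulation point. Your separate stratum $\Sigma^1$ is redundant once one knows this, and your attempt to bound its codimension by the generic Thom--Porteous count is both unnecessary and, absent a transversality verification, not a proof.
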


\begin{proof}
Notice first that the critical points of $\pi$ are non-imersed curves. Indeed, it follows from Proposition 1.9 in \cite{wel1} that the kernel of $d \pi$ at a point $(u,J)\in\rmdod$ is isomorphic to $\ker(\OB_{(u,J)})$. Since $c_1(X).d = 0$, this kernel is trivial when the curve $u$ is immersed (see the sequence (2) in \cite{wel1}).

On the other hand, Proposition 2.7 in \cite{wel1} implies that if $\delta$ is generic enough, the subset of $\rmdod$ consisting of non-immersed curves has no accumulation point.
\end{proof}

Let $\Cl\in \crmdod\setminus\rmdod$ be a reducible curve. Take an open neighborhood $N_{\Cl}$ of $\Cl$ in $\crmdod$ such that its boundary as an open subset of $\crmdod$ contains only a finite number of points all of which are in $\rmdod$ and non-isolated in $\rmdod\setminus N_{\Cl}$. We will see later an example of such a neighborhood (see Lemma \ref{neig}). The orientation of $\rmdod$ gives a sign to each boundary point of $N_{\Cl}$, $+$ if by following the orientation of $\rmdod$ one goes from outside $N_{\Cl}$ to inside at the boundary point, $-$ otherwise. Let us write $m_{\ssp}(N_{\Cl})$ the sum of the signs of all the boundary points.

\begin{definition}
The open set $N_{\Cl}$ is called a black box around $\Cl$ and the integer $m_{\ssp}(N_{\Cl})$ is its contribution.
\end{definition}

Let us now show the existence of black boxes.

\begin{lemma}\label{neig}
  Let $\Cl\in \crmdod\setminus\rmdod$ be a reducible curve. Then for any open neighborhood $V\subset \crmdod$ of $\Cl$ there exists a black box $W\subset V$ around $\Cl$.
\end{lemma}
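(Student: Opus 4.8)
\textbf{Proof proposal for Lemma \ref{neig}.}

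The plan is to produce the black box $W$ as the intersection of $\crmdod$ with a genuinely small Gromov-metric ball around $\Cl$, and then to show that for a co-meager set of radii this intersection has the required boundary properties. First I would fix a distance function $\rho$ on $\crmdod$ inducing the Gromov topology (such a function exists since $\crmdod$ is a compact metrizable space), and consider the open balls $B_r = \{\,\Cl'\in\crmdod : \rho(\Cl,\Cl')<r\,\}$ for $r>0$ small enough that $B_r\subset V$. The topological boundary of $B_r$ in $\crmdod$ is contained in the sphere $\{\rho(\Cl,\cdot)=r\}$, and these spheres are pairwise disjoint for distinct $r$; since $\crmdod\setminus\rmdod$ consists of stable curves with at least two irreducible components — and, by Lemma \ref{fincomp} applied after shrinking $\delta$, together with the fact (from Lemma \ref{finite} and the genericity of $\delta$) that the bad locus of $\crmdod$ meets only countably many levels of $\rho(\Cl,\cdot)$ — for all but countably many $r$ the sphere $\{\rho(\Cl,\cdot)=r\}$ avoids $\crmdod\setminus\rmdod$ entirely and avoids the (discrete, by Lemma \ref{fincomp}) set of critical points of $\pi$ on $\rmdod$. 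Choose such an $r$.

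For such a radius $r$, set $W=B_r$. By construction $W$ is an open neighborhood of $\Cl$ contained in $V$, and its boundary $\partial W$ as an open subset of $\crmdod$ lies in $\{\rho(\Cl,\cdot)=r\}\cap\rmdod$, hence consists of points at which $\rmdod$ is a smooth embedded $1$-manifold and $\pi$ is a submersion. I would then argue that $\partial W$ is finite: $\overline{W}\setminus W\subset\{\rho(\Cl,\cdot)=r\}$ is closed in the compact space $\crmdod$, hence compact, and it is contained in the $1$-manifold $\rmdod$; if it were infinite it would have an accumulation point $\Cl_0$, which would lie in $\rmdod\cap\{\rho=r\}$ (a closed condition), but near a point of the $1$-manifold $\rmdod$ the level set of the smooth proper-on-a-chart function $\rho(\Cl,\cdot)$ — smooth after a further generic perturbation of $\rho$, or simply using that $\rmdod$ near $\Cl_0$ is a $1$-dimensional chart in which $\{\rho=r\}$ is a proper analytic-type subset — is discrete, a contradiction. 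Finally, each boundary point is non-isolated in $\rmdod\setminus W$ because $\rmdod$ is a $1$-manifold without boundary near that point and the local picture of an embedded arc crossing the hypersurface $\{\rho=r\}$ transversally shows that the arc continues on the outside; transversality of the crossing is exactly what the genericity of $r$ buys us.

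The main obstacle I anticipate is the regularity of the distance function: the Gromov-metric level sets need not be submanifolds, so the clean statement ``$\{\rho=r\}$ meets the $1$-manifold $\rmdod$ in a discrete set for generic $r$'' requires care. I would handle this by replacing $\rho(\Cl,\cdot)$, in a compact neighborhood of $\Cl$ in $\rmdod$, by a smooth exhaustion-type function $f$ with $f^{-1}(0)$ a neighborhood basis of $\Cl$ (built from the local Kuranishi/gluing charts around the reducible curve, which are finite-dimensional and smooth), apply Sard's theorem to $f|_{\rmdod}$ restricted to the relevant compact piece to get regular values $r$ arbitrarily close to $0$, and then take $W=\{f<r\}$. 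Compactness of $\crmdod$ guarantees that only finitely many regular level components appear, giving the finiteness of $\partial W$; the remaining properties — $W\subset V$ for $r$ small, boundary points in $\rmdod$, non-isolated in the complement — then follow from the transversality of the level set to the $1$-manifold, exactly as in the second paragraph. The existence of the smooth local model near $\Cl$ is the one external input, and it is standard for moduli of stable pseudo-holomorphic curves.
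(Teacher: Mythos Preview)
Your opening move---choose a metric ball $B_r$ with generic radius so that $\{\rho=r\}$ misses the countable set $\crmdod\setminus\rmdod$---is exactly how the paper begins. The divergence is in how to obtain \emph{finiteness} of $\partial W$. You correctly spot that the Gromov distance $\rho$ has no useful regularity on the $1$-manifold $\rmdod$, so ``$\{\rho=r\}\cap\rmdod$ is discrete'' is not free; your proposed repair is to replace $\rho$ by a smooth exhaustion $f$ coming from a gluing/Kuranishi chart around $\Cl$ and apply Sard. That can be made to work, but in this paper the relevant gluing chart (Theorem \ref{gluing}) is established \emph{after} the present lemma, so you are invoking something logically downstream; and even granting it, the chart you cite is for the $\P$-thickened moduli, so extracting a smooth $f$ on $\crmdod$ itself via the forgetful map still requires an argument you have not written.

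The paper sidesteps the regularity problem entirely by using a smooth map that is already at hand: the projection $\pi:\rmdod\to\cyl$. After taking $B(\Cl,\epsilon)$ with boundary in $\rmdod$, one observes that $\pi^{-1}(\{t_0\})\cap\partial B(\Cl,\epsilon)$ is finite (Lemma \ref{fincomp} plus the fact that $\pi$ from a $1$-manifold to a $2$-manifold has discrete fibers at immersed points), excises small closed neighborhoods of these finitely many points to get $W'$ with $\partial W'\cap\pi^{-1}(\{t_0\})=\emptyset$, and then intersects with $\pi^{-1}(\Delta)$ for a small disc $\Delta\ni t_0$ whose boundary is transverse to $\pi$. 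The black box is $W'\cap\pi^{-1}(\Delta)$: its boundary sits inside $\pi^{-1}(\partial\Delta)\cap\overline{W'}$, which is a discrete-by-transversality subset of $\rmdod$ meeting a compact set, hence finite. So the smooth function playing the role of your $f$ is $\pi$ itself, and no gluing input is needed.

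In short: your argument is salvageable but leans on heavier (and, in this paper's order, later) machinery; the paper's trick of using $\pi$ to cut out the black box is the cleaner idea you are missing. A minor point: avoiding the critical points of $\pi$ on the sphere, which you mention in the first paragraph, plays no role in the definition of a black box and can be dropped.
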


\begin{proof}
  Take an open neighborhood $V\subset\crmdod$ of $\Cl$. The space $\crmdod$ being metrizable, fix one such metric. Since $\crmdod\setminus \rmdod$ is countable, there exists $\epsilon>0$ such that the open ball $B(\Cl,\epsilon)$ centered at $\Cl$ and of radius $\epsilon$ is included in $V$ and its boundary contains only elements of $\rmdod$. 

On the other hand, $\pi^{-1}(\{t_0\})\cap \rmdod$ has no accumulation point (Lemma \ref{fincomp}), so the set $\pi^{-1}(\{t_0\})\cap\partial B(\Cl,\epsilon)$ is finite. For each $v\in\pi^{-1}(\{t_0\})\cap\partial B(\Cl,\epsilon)$, choose a small neighborhood $V_v\subset\rmdod$ of $v$ such that $\pi^{-1}(\{t_0\})\cap \overline{V_v} = \{v\}$. Then define $W = B(\Cl,\epsilon)\setminus\displaystyle\bigsqcup_{v\in\partial V\cap\pi^{-1}(\{t_0\})} \overline{V_v}$. It is an open neighborhood of $\Cl$ satisfying $\partial W\cap\pi^{-1}(\{t_0\}) = \emptyset$.

Then for all small enough disc $\Delta\subset\cyl$ containing $\pi(\Cl)$, we have $\partial (\pi^{-1}(\Delta)\cap W) = \partial(\pi^{-1}(\Delta))\cap \overline{W}$. Indeed, otherwise there would be a sequence $v_n\in\partial W$ such that $\pi(v_n)$ would converge to $t_0$, which contradicts the property of $W$. Then taking $\Delta$ with smooth boundary transverse to $\pi$, we get $\partial (\pi^{-1}(\Delta)\cap W) = \pi^{-1}(\partial \Delta)\cap \overline{W}\subset \pi^{-1}(\partial \Delta)$, and this last set is a finite subset of $\rmdod$. Thus $\pi^{-1}(\Delta)\cap W$ is a black box around $\Cl$ contained in $V$.
\end{proof}

Now, suppose that for each curve $\Cl\in\crmdod\setminus\rmdod$ we have a black box $N_{\Cl}$ around it. Then the reunion $\rmdod\displaystyle \bigcup_{\Cl\in\crmdod\setminus\rmdod}N_{\Cl}$ is an open cover of $\crmdod$. Since $\crmdod$ is compact, we can find $\Cl_1,\ldots,\Cl_N\in \crmdod$ such that $\rmdod\displaystyle \bigcup_{i=1}^N N_{\Cl_i}$ is still a cover of $\crmdod$. It follows from Stokes's Theorem that the difference $\chi^{\ssp}_d(X,\Omega,c_X;\gamma) - \chi^{\ssp}_d(X,\Omega,c_X;\gamma')$ is equal to $\displaystyle\sum_{i=1}^N m_{\ssp}(N_{\Cl_i})$. Thus, to prove Theorem \ref{second}, it is enough to prove that all the reducible curves in $\crmdod$ admit a black box with vanishing contribution.

\subsubsection{$\P$-thickened curves}\label{parthick}

Consider a reducible curve $\Cl = [\Sigma,j_{\Sigma},c_{\Sigma},u_0,t_0]\in\crmdod$ where $(\Sigma,j_{\Sigma},c_{\Sigma})$ is a nodal rational curve equipped with an anti-holomorphic involution $c_{\Sigma}$, and $u_0 : \Sigma \rightarrow X$ is a stable $\delta(t_0)$-holomorphic map such that $u_0\circ c_{\Sigma} = c_X\circ u_0$. To evaluate the contribution of a black box around $\Cl$, the idea is to use a gluing theorem to describe a neighborhood of $\Cl$ in $\crmdod$. Unfortunately, we cannot say much about the curve $\Cl$ except that if $\delta$ is generic enough it has only two component in its image. In particular, the curve $\Sigma$ could have many components, some of them on which $u_0$ could be multiple or constant. This implies that we cannot hope to have a nice description of a neighborhood of $\Cl$ in $\crmdod$.

Instead, we will consider a larger class of curves in which we will smoothen the nodes of $\Cl$. To this end, choose $n = 2s + l$ points $p_1,\ldots,p_n$ on $\Sigma$ away from the nodes and such that
\begin{itemize}
\item the marked curve $(\Sigma,j,p_1,\ldots,p_n)$ is stable,
\item for all $i = 1,\ldots,s$, $c_{\Sigma}(p_i) = p_{s+i}$,
\item for all $i = 2s +1,\ldots,2s +l$, $c_{\Sigma}(p_i) = p_{i}$,
\item for all $i = 1,\ldots,n$, $u_0$ is an immersion at $p_i$,
\item for all $i = 1,\ldots,n$ and $j = 1,\ldots,n$, $u_0(p_i)\neq u_0(p_j)$ if $i\neq j$.
\end{itemize}

\begin{remark}
  The curve $\Sigma$ is the reunion of $m +1$ spheres $\Sigma_1,\ldots,\Sigma_{m+1}$ which can be of three different types
  \begin{enumerate}
  \item $c_{\Sigma}(\Sigma_i) = \Sigma_i$ and $Fix(c_{\Sigma})\cap \Sigma_i \neq \emptyset$,
  \item\label{type} $c_{\Sigma}(\Sigma_i) = \Sigma_i$ and $Fix(c_{\Sigma})\cap \Sigma_i = \emptyset$,
  \item $c_{\Sigma}(\Sigma_i) = \Sigma_j$ for $i\neq j$.
  \end{enumerate}

The minimum number of points to add on $\Sigma$ so that it becomes stable is $m + 3$. However, we cannot always choose them to satisfy the second and third conditions. For example, if one the the components of $\Sigma$ is of type \eqref{type} and is attached to exactly two other components, then we need only add one point to it to get the stability, but this point would then have to be real in order to satisfy the third condition. All in all, we only have the inequality $n\geq m+3$.
\end{remark}

The stable real curve $(\Sigma,j_{\Sigma},c_{\Sigma},p_1,\ldots,p_n)$ has no automorphism and gives a point in the moduli space of $n$-marked real rational curves denoted by $\rmok$ as defined by Ceyhan, Definition 3.2 in \cite{ceyhan}. This space is the real part of the moduli space $\mok$ of $n$-marked rational curves equipped with an appropriate real structure $c_{2s,l}$ (see Theorem 3.4 in \cite{ceyhan}). The space $(\mok,c_{2s,l})$ is a smooth projective manifold of dimension $n-3$ and admits a universal curve $\Uk\rightarrow \mok$ which is equipped with a real structure $c_{\Uk}$ lifting $c_{2s,l}$ and such that its restriction to the fiber over $[S] \in \rmok$ gives a real structure turning the fiber into a real stable curve whose isomorphism class is $[S]$ (see Theorem 3.4 in \cite{ceyhan}).

Choose $n$ disjoint smooth open and orientable submanifolds $H_1,\ldots,H_n$ of codimension $2$ such that
\begin{itemize}
\item for all $i = 1\ldots,n$, $u_0(p_i)\in H_i$ and $u_0$ is transverse to $H_i$ at $p_i$,
\item for all $i = 1,\ldots,s$, $c_X(H_i) = H_{s+i}$ and for all $i = 2s+1,\ldots,2s+l$, $c_X(H_i) = H_{i}$.
\end{itemize}

Now consider the complex vector bundle $TX_{\delta}$ over $X\times \cyl$ which is the pullback of $TX$ with the complex structure given by $\delta : v \in T_{x,t} X_{\delta} \mapsto \delta(x)v \in T_{x,t}X_{\delta}$ for all $x\in X$ and $t\in \cyl$. There is also a complex line bundle $\Lambda^{0,1} T_{\rmok}\Uk$ over $\Uk$ such that its restriction to a fiber of the universal curve is the bundle of the $(0,1)$-forms on the fiber. Thus, we obtain a complex vector bundle $\Lambda^{0,1}T_{\rmok}\Uk \otimes TX_{\delta}$ over $\Uk\times X\times \cyl$. Let us denote by $\pert = \Cl^l(\Lambda^{0,1}T_{\rmok}\Uk \otimes TX_{\delta})_{+1}$ the space of section $e$ of this bundle of class $\Cl^{l}$ such that $e(c_{\Uk}(.),c_X(.),.) = d c_X \circ e(.,.,.)\circ d c_{\Uk}$.

Using the terminology of Pardon in \cite{pardon} (Definition 9.2.3), we define the following.
\begin{definition}\label{thick}
  Let $\P\subset \pert$ be a finite dimensional vector space such that all its elements are supported away from the nodes, i.e. if $Sing(\Uk)\subset\Uk$ is the set of nodes of the curves in the universal family, then there exists a neighborhood of $Sing(\Uk)\times X\times \cyl$ on which all the elements of $\P$ vanish. A $\P$-thickened pseudo-holomorphic curve of degree $d$ is a quadruple $(\PS,u,t,e)$ where
  \begin{enumerate}
  \item $\PS = (S,j_S,c_S,\up)$ is a stable $n$-marked real rational curve, $t\in \cyl$ and $e \in \P$,
  \item $u : S \rightarrow X$ is a smooth map with $u\circ c_S = c_X \circ u$ and $u_*[S] = d$,
  \item for all $i = 1,\ldots,n$, $u(p_i)\in H_i$ and $u$ is transverse to $H_i$ at $p_i$,
  \item for all $z \in S$, $d_z u + \delta(t)\circ d_z u \circ j_S = e(\phi(z) , u(z), t)\circ d_z\phi$, where $\phi : \PS\rightarrow (\Uk)_{|[\PS]}$ is the unique isomorphism between $\PS$ and the corresponding fiber of the universal curve.
  \end{enumerate}
We say that two $\P$-thickened pseudo-holomorphic curves $((S,j_S,c_{S},\up),u,t,e)$ and $((S',j_{S'},c_{S'},\up'),u',t',e')$ are equivalent if $t = t'$, $e = e'$ and there exists an isomorphism $\phi : (S,j_S,c_{S},\up) \rightarrow (S',j_{S'},c_{S'},\up')$ such that $u = u'\circ \phi$.
\end{definition}

Let $\P\subset \pert$ be as in Definition \ref{thick}. Let $\crmdp$ be the set of equivalence classes of $\P$-thickened pseudoholomorphic curves of degree $d$. It is endowed with the Gromov topology, for which the map $\pi_{\P} : \crmdp \rightarrow \cyl\times \P$ is continuous and proper. Note also that the curve $\Cl$ is in $\crmdp$. 

We consider also $\rmdp\subset \crmdp$ the subspace consisting of irreducible curves. We can describe it in the following way. 

First let us recall the construction of the Teichmüller spaces (following \cite{earle}). Take $S$ to be a smooth oriented $2$-sphere and fix $\uz = (z_1,\ldots,z_n)$ a $n$-tuple of disctinct points on $S$. Let $Diff(S;\uz)$ be the group of diffeomorphisms $\phi$ of $S$ of class $\Cl^{l+1}$ fixing all points of $\uz$ if $\phi$ preserves the orientation, inducing the permutation $\bigl(\begin{smallmatrix} 1 & \ldots & s & s+1 & \ldots & 2s & 2s+1 & \ldots & 2s + l \\ s+1 & \ldots & 2s & 1 & \ldots & s & 2s+1 & \ldots & 2s + l\end{smallmatrix}\bigr)$ on $\uz$ otherwise. Denote by $Diff_0(S;\uz)$ (resp. $Diff^+(S;\uz)$) the identity component of $Diff(S;\uz)$ (resp. the subgroup of $Diff(S;\uz)$ consisting of orientation preserving diffeomorphisms). The quotient $\TT_n = \js/Diff_0(S;\uz)$ is the Teichm\"uller space of $(S,\uz)$. It is a complex manifold of dimension $n-3$. The groups $\Gamma_{2s,l} = Diff(S;\uz)/Diff_0(S;\uz)$ and $\Gamma^+_n = Diff^+(S;\uz)/Diff_0(S;\uz)$ act on it. In fact, the elements of $\Gamma^+_{n}$ act as holomorphic automorphisms of $\TT_n$ and those of $\Gamma_{2s,l}\setminus\Gamma_n^+$ as anti-holomorphic automorphisms. There exists a holomorphic structure on $S\times \TT_n$ turning this product in the universal family over $\TT_n$. It is not unique, but we fix one once and for all. It induces a section $J_S : \TT_n\rightarrow \js$. Moreover, the group $\Gamma_{2s,l}$ acts on $S\times \TT_n$ as a group of holomorphic and anti-holomorphic automorphisms preserving the projection on $\TT_n$ (see \cite{earle}).

 On the other hand, let $\CS^d(\uh) = \{u\in L^{k,p}(S,X)\, |\, u_*[S] = d,\, \forall i = 1,\ldots ,n,\, u(z_i) \in H_i,\, u\pitchfork H_i\}$. Then we can define a Banach bundle $\E'$ over $\CS^d(\uh)\times \TT_n\times \cyl\times \P$ whose fiber over an element $(u,\tau,t,e)$ is $L^{k-1,p}(S,\Lambda^{0,1}S\otimes u^*TX)$ where the complex structures involved are given by $J_S(\tau)$ and $\delta(t)$, and a section $\Phi_{\DB}$ of $\E'$ given by $\Phi_{\DB}(u,\tau,t,e) = d u + \delta(t)\circ d u \circ J_S(\tau) - e(.,u,t)$. The last term in the previous operator is in fact given by $e(\phi(.),u(.),t)\circ d\phi$, where $\phi$ is the unique isomorphism between $(S,J_S(\tau),\uz)$ and the corresponding fiber of the universal curve; to simplify the notation, we forget about $\phi$ and consider $e$ directly as a section of the adequate bundle over the product of the universal curve over $\TT_n$ and $X\times\cyl$. 

The group $\Gamma_{2s,l}$ acts on the zero set $\hmdp$ of $\Phi_{\DB}$. The only elements of $\Gamma_{2s,l}$ having fixed points are orientation reversing involutions of $S$, and an element of $\hmdp$ can only be fixed by at most one element of $\hmdp$. For each orientation reversing involution $c_S\in\Gamma_{2s,l}$, let us write $\rchmdp$ the set of its fixed points. We denote by $\rhmdp$ the reunion of all the components $\rchmdp$. The group $\Gamma^+_n$ then acts freely on $\rhmdp$ and the quotient is $\rmdp$. 

Let us now discuss the smooth structure on $\rmdp$ (in the spirit of \cite{shev}). To this end, let us fix a riemannian metric $g_X$ on $X$ which is invariant under $c_X$ and denote by $\nabla$ its Levi-Civita connection. In order to simplify the notations, we will use $\nabla$ to denote all the induced connections on the various bundles. Take an orientation reversing involution $c_S\in\Gamma_{2s,l}$. Note that since $c_S$ is an anti-holomorphic automorphism of $S\times \TT_n$ preserving the projection on $\TT_n$, for each $\tau\in\TT_n$, $c_S$ induces a diffeomorphism of $S$ which we denote the same way. The involution acts on $\CS^d(\uh)\times\TT_n$ and we denote by $\R_{c_S}\CS^d(\uh)\times\R_{c_S}\TT_n$ its fixed points. This action lifts to the bundle $\E'$. The section $\Phi_{\DB}$ restricts to $\R_{c_S}\CS^d(\uh)\times\R_{c_S}\TT_n\times\cyl\times \P$ as a section of the Banach sub-bundle $\E'_{+1}$ of $\E'$ consisting of the $+1$ eigenspace of the involution induced by $c_S$ and $c_X$. The linearization $\tD_u$ of $\Phi_{\DB}$ at $(u,\tau,t,e)$ is given by
\[
\tD_u(v,\dtau,\dt,\de) = \Dn_u(v) + \delta(t)\circ d u \circ d_{\tau} J_S(\dtau) + d_t\delta(\dt) \circ d u \circ J_S(\tau) - \de - \nabla_{(\dtau,0,\dt)} e,
\]
for $(v,\dtau,\dt,\de)\in T_{(u,\tau,t,e)}\R_{c_S}\CS^d(\uh)\times\R_{c_S}\TT_n\times\cyl\times \P$, where $\Dn_u$ is the operator given by
\[
\Dn_u(v) = \nabla v + \delta(t)\circ \nabla v \circ J_S(\tau) + \nabla_v \delta(t) \circ d u \circ J_S(\tau) - \nabla_v e.
\]
Let us also denote by $\vD_{u}$ the restiction of $\tD_u$ to $T_{(u,e)}\R_{c_S}\CS^d(\uh)\times\P$.

\begin{lemma}\label{fredholm}
  The linearization of $\Phi_{\DB}$ at a point $(u,\tau,t,e)\in \rhmdp$ is a Fredholm operator of index $\dim(\P) +1$.
\end{lemma}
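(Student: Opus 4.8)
The plan is to split the linearization $\tD_u$ into a Cauchy--Riemann part in the $v$--direction and a bounded operator coming out of the finite--dimensional factors $\R_{c_S}\TT_n$, $\cyl$ and $\P$, and to add the indices. The explicit formula for $\tD_u$ shows $\tD_u(v,\dtau,\dt,\de) = \Dn_u(v) + L(\dtau,\dt,\de)$, where $L$ is a bounded linear map from the finite--dimensional space $T_\tau\R_{c_S}\TT_n \oplus T_t\cyl \oplus \P$ into the fibre $L^{k-1,p}(S,\Lambda^{0,1}S\otimes u^*TX)_{+1}$ of $\E'_{+1}$. Writing $\tD_u = (\Dn_u \oplus 0) + (0 \oplus L)$, the operator $\Dn_u \oplus 0$ is Fredholm of index $\ind(\Dn_u) + (n-3) + 2 + \dim(\P)$ as soon as $\Dn_u$ is Fredholm (using $\dim_\R \R_{c_S}\TT_n = \dim_\C \TT_n = n-3$ and $\dim\cyl = 2$), while $0\oplus L$ has finite rank, hence is compact; so $\tD_u$ is Fredholm with the same index, and it remains to prove that $\Dn_u$ is Fredholm and to compute $\ind(\Dn_u)$.

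The formula defining $\Dn_u$ makes sense on all of $L^{k,p}(S,u^*TX)_{+1}$, where it is a real generalized Cauchy--Riemann operator on $(u^*TX,dc_X)$ --- for the complex structures $\delta(t)$ on $X$ and $J_S(\tau)$ on $S$ --- modified by a zeroth--order term; as such it is Fredholm, and by the Riemann--Roch formula for real Cauchy--Riemann operators its index equals $c_1(X,\Omega)d + 2$, which is $2$ since $u^*TX$ has complex rank $2$ and $c_1(X,\Omega)d = 0$. The operator $\Dn_u$ of the statement is the restriction of this extension to the closed subspace $\{v \in L^{k,p}(S,u^*TX)_{+1} : v(z_i) \in T_{u(z_i)}H_i,\ i=1,\dots,n\}$ of the tangent space at $u$ to $\R_{c_S}\CS^d(\uh)$; since restricting a Fredholm operator to a closed subspace of finite codimension keeps it Fredholm and lowers the index by exactly that codimension, I am left to compute this codimension, which I would do one marked point at a time.

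For a conjugate pair $(z_i,z_{s+i})$ with $1\le i\le s$, equivariance gives $v(z_{s+i}) = dc_X\big(v(z_i)\big)$, and as $c_X(H_i)=H_{s+i}$ the two conditions at $z_i$ and $z_{s+i}$ collapse to the single condition $v(z_i)\in T_{u(z_i)}H_i$, of real codimension $2$; the $s$ pairs contribute $-2s$ to the index. For a real marked point $z_i$ with $2s<i\le n$, equivariance forces $v(z_i)\in \R(u^*TX)_{z_i} = T_{u(z_i)}\R X$, a real $2$--plane, and since $H_i$ is $c_X$--invariant its intersection $H_i\cap\R X$ is a $1$--dimensional submanifold of the surface $\R X$ through $u(z_i)$, so $v(z_i)\in T_{u(z_i)}H_i$ reduces to the codimension-$1$ condition $v(z_i)\in T_{u(z_i)}(H_i\cap\R X)$; the $l$ real points contribute $-l$. (That these are honest codimensions uses that $u_0$ is transverse to the $H_i$, so the evaluation maps are submersions onto the relevant normal bundles; when $\R S=\emptyset$ there are simply no real marked points and $l=0$.)

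Altogether $\ind(\Dn_u) = 2 - 2s - l$, and therefore
\[
\ind(\tD_u) = (2 - 2s - l) + (n-3) + 2 + \dim(\P) = \dim(\P) + 1,
\]
using $n = 2s+l$. The step I expect to be the main obstacle is precisely the codimension bookkeeping of the third paragraph: one must observe that a real marked point, whose associated $H_i$ is necessarily $c_X$--invariant and hence meets $\R X$ in a curve, imposes only a codimension-$1$ condition, in contrast with the codimension-$2$ condition coming from a conjugate pair. Once this is pinned down, everything else is the standard package --- Cauchy--Riemann operators over $\cpo$ are Fredholm, the Fredholm property and the index are stable under compact perturbation and under restriction to closed finite--codimension subspaces, and summing in a bounded operator defined on a finite--dimensional space raises the index by that dimension.
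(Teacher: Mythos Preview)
Your proof is correct and follows exactly the paper's approach: split $\tD_u$ into the real Cauchy--Riemann piece $\Dn_u$ and a finite-rank perturbation, then add the index $2-n$ of $\Dn_u$ to the dimensions $(n-3)+2+\dim\P$ of the extra factors. The paper simply cites Welschinger for $\ind(\Dn_u)=2-n$, whereas you spell out the codimension count; one small sharpening of your real-point step: the fact that $(T_{u(z_i)}H_i)_{+1}=T_{u(z_i)}(H_i\cap\R X)$ is one-dimensional is not a consequence of $c_X$-invariance alone but follows because transversality $T_{u(z_i)}X = du(T_{z_i}S)+T_{u(z_i)}H_i$, taken on $\pm1$ eigenspaces, forces each eigenspace of $dc_X$ on $T_{u(z_i)}H_i$ to be exactly one-dimensional.
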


\begin{proof}
 Denote by $c_S$ the element of $\Gamma_{2s,l}$ fixing $(u,\tau,t,e)$. The operator $\Dn_u$ is the restriction of a real generalized Cauchy-Riemann operator on $u^*TX$ to the space $L^{k,p}(S,u^*TX;\uh)_{+1}$ consisting of sections $v$ of $TX$ such that for all $i$ from $1$ to $n$, $v_i\in T_{u(z_i)}H_i$ and $d c_X(v) = v\circ c_S$. In particular, it is Fredholm of index $2 - n$ given by Riemann-Roch theorem (see Lemma 1.6 in \cite{wel1}). Since the space $T_{(\tau,t,e)}\R_{c_S}\TT_n\times\cyl\times \P$ is finite dimensional, the operator $\tD_{u_0}$ is also Fredholm. Its index is given by $2-n + \dim(\R_{c_S}\TT_n) + \dim(\cyl) + \dim (\P) = 2-n + n-3 +2 + \dim(\P) = \dim(\P) + 1$.
\end{proof}

Coming back to our original reducible curve $\Cl = [\Sigma,j_{\Sigma},c_{\Sigma},u_0,t_0]\in\crmdod$, let $L^{k,p}(\Sigma,u_0^*TX;\uh)_{+1}$ be the subspace of $L^{k,p}(\Sigma,u_0^*TX)$ consisting of the sections $v$ of $u_0^*TX$ such that for all $i$ from $1$ to $n$, $v_i \in T_{u_0(p_i)}H_i$, and $d c_X(v) = v\circ c_{\Sigma}$. We can also define operators $\vD_{u_0}$ (resp. $\Dn_{u_0}$) from $L^{k,p}(\Sigma,u_0^*TX;\uh)_{+1} \times T_{t_0}\cyl \times T_0 \P$ (resp. from $L^{k,p}(\Sigma,u_0^*TX;\uh)_{+1}$) to  $L^{k-1,p}(\Sigma,\Lambda^{0,1}\Sigma\otimes u_0^* TX)_{+1}$ by
\begin{gather*}
\vD_{u_0}(v,\de) = \Dn_{u_0}(v) - \de.
\intertext{and}
\Dn_{u_0}(v) = \nabla v + \delta(t_0)\circ \nabla v \circ j_{\Sigma} + \nabla_v \delta(t_0) \circ d u_0 \circ j_{\Sigma},\\
\end{gather*}
Moreover, since the operator $\Dn_{u_0}$ is Fredholm, so is $\vD_{u_0}$.

\begin{definition}
  A finite dimensional vector space $\P\subset\pert$ is said to be $\Cl$-regularizing if the operator $\vD_{u_0}$ is surjective.
\end{definition}

Taking such a regularizing space $\P$ ensures that all the $\P$-thickened pseudo-holomorphic curves in a neighborhood of $\Cl$ are regular, i.e. that the section $\Phi_{\DB}$ vanishes transversally on a neighborhood of $\Cl$. More precisely, recall the following result (see for example Theorem B.1.1.i in \cite{pardon}).

\begin{prop}\label{surj}
  Let $\P\subset \pert$ be a $\Cl$-regularizing space. Then there exists an open neighborhood $V_{\P}$ of $\Cl$ in $\crmdp$ such that for any $\P$-thickened irreducible curve $(u,\tau,t,e)$ appearing in this neighborhood, the operator $\vD_{u}$ is surjective. In particular, $W_{\P} = V_{\P}\cap \rmdp$ is a Banach manifold of class $\Cl^{l-k}$ and of dimension $\dim(\P) +1$. Moreover, the map $\pi_{\P} : W_{\P} \rightarrow \cyl\times \P$ is smooth and transverse to $\cyl\times\{0\}$.\qed
\end{prop}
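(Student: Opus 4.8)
The plan is to deduce this from the standard linear gluing and transversality machinery for (real) pseudo-holomorphic curves --- it is essentially \cite[Theorem B.1.1.i]{pardon}, with the analytic input of \cite[Chapter 10]{MDS} --- and I describe below the structure I would use.

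The first and hardest step is to produce an open neighbourhood $V_\P$ of $\Cl$ in $\crmdp$ at every $\P$-thickened irreducible curve of which $\vD_u$ is surjective. This is an upper-semicontinuity statement: surjectivity of $\vD_{u_0}$ should propagate to $\vD_u$ for all $\P$-thickened irreducible curves $(u,\tau,t,e)$ that are Gromov-close enough to $\Cl$. For curves whose domain stays away from the nodal boundary --- no neck of $\Sigma$ being pinched --- this is just the upper semicontinuity of $\dim\coker$ along the continuous family $(u,\tau,t,e)\mapsto\vD_u$ over $\rmdp$. The real work is when the domain of $u$ is obtained by smoothing some nodes of $\Sigma$, for then $u$ is long and thin along the vanishing necks and the operators do not extend continuously up to $\Cl$. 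Here I would run the usual linear gluing argument: fix a bounded right inverse $Q_0$ of $\vD_{u_0}$; for each small choice of gluing parameters $\rho$ at the smoothed nodes --- taken equivariantly for the conjugation, so that conjugate pairs of nodes receive conjugate parameters and real nodes real ones --- build a preglued curve $u_\rho$ and an approximate right inverse $Q_\rho$ of its linearized operator by patching $Q_0$ with the standard right inverses on the necks using cut-off functions; since the elements of $\P$ are supported away from the nodes the perturbation term plays no role near the necks, so the standard weighted Sobolev estimates give $\|\vD_{u_\rho}\circ Q_\rho-\id\|<1$ for $\rho$ small, whence $\vD_{u_\rho}$ is surjective with a right inverse bounded uniformly in $\rho$. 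A Newton iteration based on this uniform right inverse, with the usual quadratic estimate, then describes a full Gromov-neighbourhood $V_\P$ of $\Cl$ by corrected preglued curves; at each such curve the linearized operator is a small perturbation of some surjective $\vD_{u_\rho}$, hence is itself surjective. This linear gluing estimate is where I expect all the difficulty to lie; everything else is formal.

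Granting the surjectivity, the manifold structure is immediate. Since $\vD_u$ is the restriction of the full linearization $\tD_u$ of $\Phi_{\DB}$ to part of its domain, surjectivity of $\vD_u$ forces surjectivity of $\tD_u$; by Lemma \ref{fredholm}, $\tD_u$ is Fredholm of index $\dim(\P)+1$, so the implicit function theorem makes the zero locus of $\Phi_{\DB}$ a $\Cl^{l-k}$ Banach submanifold of $\R_{c_S}\CS^d(\uh)\times\R_{c_S}\TT_n\times\cyl\times\P$ of dimension $\dim(\P)+1$, with tangent space $\ker\tD_u$, near each point of $V_\P$. The group $\Gamma^+_n$ acts freely and discretely, changing neither dimension nor regularity, so $W_\P=V_\P\cap\rmdp$ is a $\Cl^{l-k}$ Banach manifold of dimension $\dim(\P)+1$; in these charts $\pi_\P$ is the linear projection onto the $\cyl\times\P$ factor, hence $\Cl^{l-k}$-smooth.

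It remains to check transversality of $\pi_\P$ to $\cyl\times\{0\}$, which I would do directly. At a point $w=(u,\tau,t,0)\in W_\P$ one has $\de=0$, so $u$ is a genuine irreducible $\delta(t)$-holomorphic curve, i.e. a point of $\rmdod$. Genericity of $\delta$ makes the homotopy transverse to $\pi$, which implies that the restriction of $\tD_u$ to the slice $\de=0$ of its domain --- the $(v,\dtau,\dt)$-directions --- is already surjective (and has index $1$, matching $\dim\rmdod=1$). So for any $\de_0\in\P$, viewed as an element of the target $L^{k-1,p}(S,\Lambda^{0,1}S\otimes u^*TX)_{+1}$, I can solve $\tD_u(v,\dtau,\dt,0)=\de_0$ for some $(v,\dtau,\dt)$; then $(v,\dtau,\dt,\de_0)\in\ker\tD_u=T_wW_\P$, since $\tD_u(v,\dtau,\dt,\de_0)=\tD_u(v,\dtau,\dt,0)-\de_0=0$, and $d\pi_\P$ sends this vector to $\de_0$. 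Hence $d\pi_\P(T_wW_\P)$ surjects onto the $\P$-factor of $T(\cyl\times\P)$, which is the required transversality.
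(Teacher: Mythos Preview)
The paper gives no proof beyond the citation to Pardon's Theorem B.1.1.i (the statement ends with \qed), so your sketch is necessarily more detailed; the outline --- linear gluing to propagate surjectivity of $\vD_{u_0}$ to nearby $\vD_u$, then the implicit function theorem with Lemma \ref{fredholm} for the manifold structure --- matches the content of that reference and is correct.

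Your transversality argument is the one substantive point that goes beyond the citation, and you handle it correctly. Surjectivity of $\vD_u$ alone does \emph{not} give transversality of $\pi_\P$ to $\cyl\times\{0\}$: what is needed is surjectivity of the restriction of $\tD_u$ to the $\de=0$ slice, which is exactly the transversality of $\delta$ to $\pi$ (transported to the marked setting via the local diffeomorphism furnished by the $\uh$-constraints). This genericity of $\delta$ is a standing assumption from \S\ref{blackb}, not restated in the proposition, and is not part of a pure gluing statement like Pardon's; you are right to invoke it. One minor slip: you identify an irreducible curve $u$ with $e=0$ as ``a point of $\rmdod$'', but $\rmdod$ is built from \emph{simple} curves, and irreducible does not imply simple in general. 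Here it does because $d$ is primitive --- the hypothesis under which the proposition is actually applied in the proof of Theorem \ref{second} --- so the gap is harmless in context, but worth making explicit.
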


From now on, $\P$ will be a $\Cl$-regularizing space, $V_{\P}$ will be a neighborhood of $\Cl$ in $\crmdp$ as given by Proposition \ref{surj}, and $W_{\P}$ the intersection $V_{\P}\cap \rmdp$.

From Proposition \ref{surj}, we deduce that the fiber product $W_0 = W_{\P} \fp{\pi_{\P}}{\iota}\cyl$ is a smooth and oriented manifold of dimension $1$ included in $V_0= V_{\P} \fp{\pi_{\P}}{\iota}\cyl$. Note that both consist of honest pseudo-holomorphic curves. Moreover, up to shrinking $V_{\P}$, we can assume as in Lemma \ref{neig} that $V_{0}$ is a black box around $\Cl$. Thus, provided we can orient the map $\pi_{\P}$, which we will show in Proposition \ref{porient}, we can define a new contribution $m_{\ssp,\P}(V_0)$ of a black box around the $\P$-thickened curve $\Cl$ as the signed count of the elements in the boundary of $V_0$.

Let $\psi : V_{0} \rightarrow \crmdod$ be the forgetful map which consists in forgetting the marked points on the curves. Then its image is a black box around $\Cl$ so we can compare the integers $m_{\ssp,\P}(V_0)$ and $m_{\ssp}(\psi(V_0))$.

\begin{prop}\label{porient}
 The map $\pi_{\P} : W_{\P} \rightarrow \cyl\times \P$ is orientable, and is oriented by the choice of an oriented real $Spin$ structure on $(TX, dc_X)$.

In particular, given an oriented real $Spin$ structure on $(TX,d c_X)$, both $\rmdod$ and $W_0 = V_0\setminus\{\Cl\}$ are smooth and oriented $1$-dimensional manifolds. Moreover, the forgetful map $\psi : W_0\rightarrow \rmdod$ is orientation preserving.
\end{prop}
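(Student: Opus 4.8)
The plan is to mimic the proof of Theorem \ref{first} in the thickened setting, using the Fredholm decomposition of the linearization $\tD_u$ of $\Phi_{\DB}$ that parallels diagram \eqref{diagg}. First I would set up the analogue of the short exact sequence of Fredholm operators: over $\rhmdp$ one has the Banach bundles corresponding to deformations of the domain (the Teichmüller directions $\R_{c_S}\TT_n$), deformations of the map, and the quotient, and the linearization $\tD_u$ fits into a short exact sequence whose outer terms involve $\Dn_u$ (the generalized real Cauchy–Riemann operator on $u^*TX$) together with the finite-dimensional factors $T\R_{c_S}\TT_n\times T\cyl\times\P$. Passing to determinant lines, this gives a canonical isomorphism
\[
\ddet(\tD_u)\;\cong\;\ddet(\Dn_u)\otimes\det\!\big(T\R_{c_S}\TT_n\big)\otimes\det\!\big(T\cyl\big)\otimes\det(\P),
\]
and, after quotienting by $\Gamma^+_n$, an isomorphism between $\ddet(d\pi_{\P})$ and the bundle built from $\ddet(\Dn_u)$ and the (oriented) finite-dimensional factors modulo $\Gamma^+_n$, exactly as Corollary 2.2.3 in \cite{shev} does for $\pi$. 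The spaces $T\cyl$ and $\P$ carry obvious orientations; the moduli of marked real curves $\rmok$ is orientable (in fact one can use the orientation coming from \cite{ceyhan}, or simply note that only its determinant line enters and it can be trivialized compatibly with the $\Gamma^+_n$-action), so orienting $\ddet(d\pi_{\P})$ reduces to orienting $\ddet(\Dn_u)$ and checking $\Gamma^+_n$-invariance.

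Next I would orient $\ddet(\Dn_u)$ using the choice of an oriented real $Spin$ structure on $(TX,dc_X)$, by the very same argument as in Proposition \ref{detd}: split $\rhmdp$ into the locus of curves with empty real part and the locus with non-empty real part, trivialize $(u^*TX,dc_X)\cong(\TC^2,c_{\TC})$ by an isomorphism $f$ pulling back the fixed oriented real $Spin$ structure (and, in the non-empty case, the $Spin$ structure $\zeta_{u(\R S)}$ on $\R TX$), transport the fixed orientation of $\Dett$, and invoke Lemma \ref{recall} to see the orientation is independent of $f$ and that the other oriented real $Spin$ structure gives the opposite orientation. The additional marked-point constraints $\uh$ only restrict the source Banach space $L^{k,p}(S,u^*TX;\uh)_{+1}$ by finitely many evaluation conditions which are part of the Fredholm operator and carry the canonical orientation of the $T_{u(z_i)}H_i$ (these are oriented submanifolds by hypothesis in \S\ref{parthick}); since every $H_i$ was chosen orientable, this contributes nothing ambiguous. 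The $\Gamma^+_n$-invariance follows from Lemma \ref{recall} just as in Proposition \ref{detd}, together with the fact that $\Gamma^+_n$ acts holomorphically on $\TT_n$ hence preserves the orientation of $\det(T\R_{c_S}\TT_n)$.

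Finally I would compare with the orientation of $\rmdod$ from Theorem \ref{first} and Proposition \ref{fporient}. Here the point is that the forgetful map $\psi : W_0\to\rmdod$ factors, at the level of tangent spaces, through the projection that kills the $\TT_n$-deformation directions and the $e$-directions; on $W_0$ we have $e=0$, so the map is literally "forget the marked points and the auxiliary perturbation direction", and the orientation of $\ddet(\Dn_u)$ is by construction the same one used in Proposition \ref{detd} to orient $\ddet(D)=\ddet(\OB)$ up to the canonical $\ddet(\DB_{j_0})$ factor of Proposition \ref{dbj}. Thus one checks that the isomorphism $\mathfrak{o}$ of Proposition \ref{fporient} applied to $\pi_{\P}$ restricted to $\{e=0\}$ matches, under $\psi$, the one applied to $\pi$; the marked points contribute oriented evaluation conditions that cancel between domain and the $\rmok$-factor by the dimension bookkeeping already done in Lemma \ref{fredholm}. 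I expect the main obstacle to be precisely this bookkeeping: keeping track, with consistent sign conventions, of how the $(n-3)$ Teichmüller directions, the $n$ evaluation conditions, and the orientation of $\rmok$ combine, and verifying that the net effect on $\ddet$ is trivial so that $\psi$ is orientation preserving rather than reversing. Everything else is a routine transcription of \S\ref{prooffirst}.
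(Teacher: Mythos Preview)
Your overall strategy---decompose $\ddet(d\pi_{\P})$ into $\ddet(\Dn_u)$ and a Teichm\"uller factor, orient $\ddet(\Dn_u)$ as in Proposition \ref{detd}, check $\Gamma^+_n$-invariance---matches the paper's. But there is a real difference in how you handle the marked-point and Teichm\"uller pieces, and this difference matters for the second half of the proposition.

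You propose to orient $\det(T\R_{c_S}\TT_n)$ via Ceyhan's orientation of $\rmok$ and the $\uh$-constraints via orientations of the $H_i$. The paper does neither. Instead it uses the two evaluation sequences: diagram \eqref{dn} gives a canonical orientation of $\ddet(\Dn_u)\otimes\Lambda^{\max}_\R\R(T_{u(\uz)}X/T_{u(\uz)}\uh)$ from $\ddet(D_u)$ (oriented by Proposition \ref{detd}), and diagram \eqref{dtau} gives a canonical orientation of $\Lambda^{\max}_\R(T_\tau\R_{c_S}\TT_n)^*\otimes\Lambda^{\max}_\R\R T_{\uz}S$ from $\ddet(\DB_\tau)$ (oriented by Proposition \ref{dbj}). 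The two ``extra'' factors are then identified via $du$. This is not an arbitrary choice: it is exactly what is needed to compare with the orientation of $\rmdod$ in Theorem \ref{first}, which was built from $\ddet(D_{u\circ\varphi^{-1}})$ and $\ddet(\DB_{j_0})$. Your route via Ceyhan also silently requires $l=0$ (Ceyhan's orientability theorem is for $\rmook$), whereas the paper's argument works for any $l$; and the $H_i$ are only assumed orientable, not oriented.

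The second assertion---that $\psi$ is orientation preserving---is the substantive part, and here your proposal is a gap rather than a sketch. The paper writes down explicit isomorphisms $\alpha:\ker(\bD_u)\to\ker(\OB_{u\circ\varphi^{-1}})$ and $\beta:\coker(\bD_u)\to\coker(\OB_{u\circ\varphi^{-1}})$ and checks, via the commutative diagram \eqref{mast} relating the exact sequence \eqref{exa} to the sequence coming from \eqref{diagg}, that the induced map on determinants respects the orientations constructed above. This is precisely why the paper chose to orient the Teichm\"uller and constraint factors through $\DB_\tau$ and $D_u$: those are the operators appearing on the $\rmdod$ side. With your independent orientations of $\rmok$ and the $H_i$, you would still owe a computation showing these agree with $\ddet(\DB_\tau)$ and the evaluation factor, which is essentially the content of Lemma \ref{orbase} later in the paper (and is not ``routine'').
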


\begin{proof}
 We proceed as in the proof of Theorem \ref{first}. Let us denote by $\widehat{W}_{\P}\subset \rhmdp$ the preimage of $W_{\P}$ by the projection $\rhmdp\rightarrow \rhmdp/\Gamma^+_n = \rmdp$, and by $\widehat{\pi}_{\P} : \widehat{W}_{\P}\rightarrow\cyl\times\P$ the projection. The determinant of $\pi_{\P}$ is the the quotient of the determinant of $\widehat{\pi}_{\P}$ by the action of $\Gamma^+_n$. To prove the first part of the proposition, we show that $\ddet{\widehat{\pi}_{\P}}$ is orientable and oriented by the choice of an oriented real $Spin$ structure on $(TX, dc_X)$, and that the action of $\Gamma^+_n$ on $\ddet(\widehat{\pi}_{\P})$ is orientation preserving.

Take a $\P$-thickened curve $(u,\tau,t,e)\in\widehat{W}_{\P}$ and denote by $c_S$ the element of $\Gamma_{2s,l}$ fixing it. Let $\bD_u$ be the restriction of $\tD_u$ to $T_{(u,\tau)}\left(\R_{c_S}\CS^d(\uh)\times\R_{c_S}\TT_n\right)\times\{0\}\times \{0\}$. We then have canonical isomorphisms $\ker(d_{(u,\tau,t,e)}\widehat{\pi}_{\P}) = \ker(\bD_u)$ and $\coker(d_{(u,\tau,t,e)}\widehat{\pi}_{\P}) = \coker(\bD_u)$. The second one holds because $\tD_u$ is surjective (see Corollary 2.2.3 in \cite{shev}). Thus, we need to orient the determinant of $\bD_u$.

To this end, let us consider the following exact sequence
\begin{equation}
0\rightarrow \ker(\Dn_u) \rightarrow \ker(\bD_u)\rightarrow T_{\tau}\R_{c_S}\TT_n\rightarrow \coker(\Dn_u)\rightarrow \coker(\bD_u)\rightarrow 0. \label{exa}
\end{equation}
The first arrow comes from the inclusion of $T_{u}\R_{c_S}\CS^d(\uh)$ in $T_{(u,\tau)}\left(\R_{c_S}\CS^d(\uh)\times\R_{c_S}\TT_n\right)$; the second one is the restriction of the projection $T_{(u,\tau)}\left(\R_{c_S}\CS^d(\uh)\times\R_{c_S}\TT_n\right) \rightarrow T_{\tau}\R_{c_S}\TT_n$; the third one is given by the restriction of $\bD_u$ to $T_{\tau}\R_{c_S}\TT_n$; the last one is the projection. This sequence gives a canonical isomorphism $\ddet(\bD_u) = \Lambda_{\R}^{\max}(T_{\tau}\R_{c_S}\TT_n)^* \otimes \ddet(\Dn_u)$. We will now orient the two right-hand terms in the previous isomorphism.

Let us begin with $\ddet(\Dn_u)$. The operator $\Dn_u$ fits in the following commutating diagram
\begin{equation}\label{dn}
\xymatrix{0 \ar[r] & L^{k,p}(S,u^*TX;\uh)_{+1} \ar[d]^{\Dn_u} \ar[r] & L^{k,p}(S,u^*TX)_{+1} \ar[d]^{D_u} \ar[r]^{ev_{\uz}} & \R \left(T_{u(\uz)}X / T_{u(\uz)}\uh \right) \ar[d]^0 \ar[r] & 0 \\
0 \ar[r] & L^{k-1,p}(S,\Lambda^{0,1}S\otimes u^*TX)_{+1} \ar[r] & L^{k-1,p}(S,\Lambda^{0,1}S\otimes u^*TX)_{+1} \ar[r] & 0 \ar[r] & 0.
}
\end{equation}
The complex vector space $T_{u(\uz)}X / T_{u(\uz)}\uh$ is the direct sum $\oplus_{i}T_{u(z_i)}X / T_{u(z_i)}\uh$. The real structure $d c_X$ induces a $\C$-antilinear involution on it and $\R\left(T_{u(\uz)}X / T_{u(\uz)}\uh\right)$ denotes the fixed points set of this involution. On the other hand, the middle column of this diagram is a real generalized Cauchy-Riemann operator on $(u^*TX,d c_X)$. It can be seen as in Proposition \ref{detd} that its determinant is canonically oriented by the choice of an oriented real $Spin$ structure on $(TX,dc_X)$ and that the action of $\Gamma^+_n$ preserves this orientation. Thus the choice of an oriented real $Spin$ structure on $(TX,dc_X)$ canonically orients $\ddet(\Dn_u)\otimes \Lambda^{\max}_{\R}\R \left(T_{u(\uz)}X / T_{u(\uz)}\uh \right)$.

Let us now consider $\Lambda_{\R}^{\max}(T_{\tau}\R_{c_S}\TT_n)^*$. Denote by $\DB_{\tau} : L^{k,p}(S,TS)_{+1}\rightarrow L^{k-1,p}(S,\Lambda^{0,1}S\otimes TS)_{+1}$ the operator given by the holomorphic stucture induced by $J_S(\tau)$ on $TS$ and denote by $\DB_{\tau,\uz}$ its restriction to the space $L^{k,p}(S,TS;\uz)_{+1}$ consisting of real sections of $TS$ vanishing at all the $z_i$. The operator $\DB_{\tau,\uz}$ has trivial kernel and its cokernel is naturally isomorphic to the space $T_{\tau}\R_{c_S}\TT_n$ (see Lemma 1.8 in \cite{article}). In fact, the isomorphism is given by $\dtau\in T_{\tau}\R_{c_S}\TT_n\mapsto -J_S(\tau) d_{\tau}J_S(\dtau)\in \coker(\DB_{\tau,\uz})$. Thus, the line $\Lambda_{\R}^{\max}(T_{\tau}\R_{c_S}\TT_n)^*$ is naturally isomorphic to the determinant of $\DB_{\tau,\uz}$. On the other hand, those operators fit in the following commutating diagram
\begin{equation}\label{dtau}
\xymatrix{0 \ar[r] & L^{k,p}(S,TS;\uz)_{+1} \ar[d]^{\DB_{\tau,\uz}} \ar[r] & L^{k,p}(S,TS)_{+1} \ar[d]^{\DB_{\tau}} \ar[r]^{ev_{\uz}} & \R T_{\uz}S \ar[d]^0 \ar[r] & 0 \\
0 \ar[r] & L^{k-1,p}(S,\Lambda^{0,1}S\otimes TS)_{+1} \ar[r] & L^{k-1,p}(S,\Lambda^{0,1}S\otimes TS)_{+1} \ar[r] & 0 \ar[r] & 0.
}
\end{equation}
The determinant of $\DB_{\tau}$ is canonically oriented as in Proposition \ref{dbj} and the action of $\Gamma_n^+$ preserves this orientation. Thus, the tensor product $\Lambda_{\R}^{\max}(T_{\tau}\R_{c_S}\TT_n)^*\otimes \Lambda^{\max}_{\R}\R T_{\uz}S$ is canonically oriented. Moreover, there is an isomorphism $\Lambda^{\max}_{\R}\R T_{\uz}S = \Lambda^{\max}_{\R}\R \left(T_{u(\uz)}X / T_{u(\uz)}\uh \right)$ given by $d u$.

To sum up, the choice of an oriented real $Spin$ structure on $(TX,dc_X)$ canonically orients $\ddet(\widehat{\pi}_{\P})_u = \ddet(\bD_u) = \ddet(\Dn_u)\otimes \Lambda_{\R}^{\max}(T_{\tau}\R_{c_S}\TT_n)^*$, and this orientation is preserved by the action of $\Gamma_n^+$, which proves the first part of the proposition.

We now turn our attention to the forgetful map. Fix a curve $(u,\tau,t)\in W_0$ and choose a diffeomorphism $\varphi\in Diff^+(S)$ such that $(\varphi^{-1})^*J_S(\tau) = j_0$, where $j_0$ is the fixed complex structure on $S$ we used in \S \ref{prooffirst}. Then the curve $(u\circ \varphi^{-1},t)$ gives an element of $\rmdod$, which does not depend on the choice of $\varphi$ and is in fact $\psi(u,\tau,t)$. The differential of $\psi$ at the point $(u,\tau,t)$ sits in the following commutating diagram
\begin{equation}\label{diag}
\xymatrix{0 \ar[r] & \ker(d_{(u,\tau,t)} \pi_{\P}) \ar[r] \ar[d]^{\alpha} & T_{(u,\tau,t)} W_0 \ar[r] \ar[d]^{d \psi} & T_t \cyl \ar[r] \ar[d]^{id} & \coker(d_{(u,\tau,t)} \pi_{\P}) \ar[r] \ar[d]^{\beta} & 0 \\
0 \ar[r] & \ker(d_{(u\circ \varphi^{-1},t)} \pi) \ar[r] & T_{\psi(u,\tau,t)} \rmdod \ar[r]  & T_t \cyl \ar[r] & \coker(d_{(u\circ\varphi^{-1},t)} \pi) \ar[r] & 0.
}
\end{equation}

Since we used the first and second lines of this diagram to orient respectively $W_0$ and $\rmdod$, it is enough to check that the isomorphism between $\ddet(d_{(u,\tau,t)} \pi_{\P})$ and $\ddet(d_{(u\circ \varphi^{-1},t)} \pi)$ induced by $\alpha$ and $\beta$ is orientation preserving.

 As we have seen at the beginning of the proof, we have canonical isomorphism $\ker(d_{(u,\tau,t)} \pi_{\P}) = \ker(\bD_u)$ and $\coker(d_{(u,\tau,t)} \pi_{\P}) = \coker(\bD_u)$. We have also seen in \S \ref{prooffirst} that we have canonical isomorphisms $\ker(d_{(u\circ \varphi^{-1},t)} \pi) = \ker(\OB_{u\circ \varphi^{-1}})$ and $\coker(d_{(u\circ\varphi^{-1},t)} \pi) = \coker(\OB_{u\circ \varphi^{-1}})$. Using these isomorphisms, the first and fourth vertical arrows in (\ref{diag}) are given by

\begin{multline*}
\alpha :(v,\dtau) \in \ker(d_{(u,\tau,t)} \pi_{\P})\subset L^{k,p}(S,u^*TX;\uh)_{+1}\times T_{\tau}\R_{c_S}\TT_n \\ \mapsto [v\circ \varphi^{-1}]\in \ker(d_{(u\circ\varphi^{-1},t)} \pi)\subset L^{k,p}(S,(u\circ\varphi^{-1})^*TX)_{+1}/L^{k,p}(S,TS)_{+1}
\end{multline*}

and

\begin{multline*}
  \beta : [w]\in \coker(d_{(u,\tau,t)} \pi_{\P}) = L^{k-1,p}(S,\Lambda^{0,1}S\otimes u^*TX)_{+1}/ \im(\bD_u) \\ \mapsto [w\circ d\varphi^{-1}] \in \coker(d_{(u\circ\varphi^{-1},t)} \pi),
\end{multline*}

 where we recall that 

\[
\coker(d_{(u\circ\varphi^{-1},t)} \pi) = \left(L^{k-1,p}(S,\Lambda^{0,1}S \otimes(u\circ\varphi^{-1})^*TX)_{+1}/ L^{k-1,p}(S,\Lambda^{0,1}S\otimes TS)_{+1} \right) / \im(\OB_{u\circ\varphi^{-1}}).
\]

 Moreover, $\alpha$ and $\beta$ sit in the following commutating diagram
\begin{equation}\label{mast}
\xymatrix{
&0 \ar[r] & 0 \ar[r] \ar[d] & \ker(\Dn_u) \ar[r]\ar[d] & \ker(\bD_u) \ar[d]^{\alpha}\ar@{-}[r]& \\
&0 \ar[r] & \ker(\DB_{j_0}) \ar[r] & \ker(D_{u\circ\varphi^{-1}}) \ar[r] & \ker(\OB_{u\circ\varphi^{-1}}) \ar@{-}[r]& \\
\ar[r]&T_{\tau}\R_{c_S}\TT_n \ar[r] \ar[d] & \coker(\Dn_u) \ar[r] \ar[d] & \coker(\bD_u) \ar[r] \ar[d]^{\beta} & 0& \\
\ar[r]&\coker(\DB_{j_0}) \ar[r] & \coker(D_{u\circ\varphi^{-1}}) \ar[r] & \coker(\OB_{u\circ\varphi^{-1}}) \ar[r] & 0.&
}
\end{equation}
The first line is the exact sequence (\ref{exa}), the second one comes from the diagram (\ref{diagg}) in \S \ref{prooffirst} (see also Corollary 1.5.4 in \cite{shev}) which we used to orient $\ddet(\OB_{u\circ\varphi^{-1}})$. Note that the fourth vertical arrow is given by $\dtau\in T_{\tau}\R_{c_S}\TT_n\mapsto d\varphi\circ (-J_S(\tau) d_{\tau}J_S(\dtau))\circ d\varphi^{-1}\in\coker(\DB_{j_0})$. Using the diagrams (\ref{dn}) and (\ref{dtau}), we deduce from (\ref{mast}) a commutative square
\[
\xymatrix{
\ddet(\bD_u) \ar[r]^{\alpha,\beta}\ar[d] & \ddet(\OB_{u\circ\varphi^{-1}}) \ar[d]\\
\ddet(\Dn_u)\otimes \Lambda_{\R}^{\max}(T_{\tau}\R_{c_S}\TT_n)^* \ar[r] & \ddet(D_{u\circ\varphi^{-1}})\otimes \ddet(\DB_{j_0})^*.
}
\]
Seeing that to orient $\ddet(\OB_{u\circ\varphi^{-1}})$, we oriented the tensor product $\ddet(D_{u\circ\varphi^{-1}})\otimes \ddet(\DB_{j_0})^*$ and then used
 the rightmost arrow (see \S \ref{prooffirst}), and that to orient $\ddet(\bD_u)$ we used the leftmost arrow and the bottom arrow, we conclude that the top arrow, induced by $\alpha$ and $\beta$, preserves the orientations. Thus, as mentionned previously, we deduce from the diagram (\ref{diag}) that the forgetful map is orientation preserving.
\end{proof}

The forgetful map can be described in a different way as follows. Let $\aut(\Cl)$ be the automorphism group of the curve $\Cl$. Up to shrinking $V_{\P}$, the group $\aut(\Cl)$ acts on $V_0$ as is explained in Lemma 3.1 in \cite{litian}. In our case, this action is free on $W_0$ as all the curves appearing there are simple, and the forgetful map is the quotient map $V_0 \rightarrow V_0/\aut(\Cl)$.

In particular, the forgetful map is $\#(\aut(\Cl))$-to-one on $W_0$, so combined with Proposition \ref{porient}, we obtain that $m_{\ssp}(\psi(V_0)) = \frac{m_{\ssp,\P}(V_0)}{\#(\aut(\Cl))}$. In particular, if $m_{\ssp,\P}(V_0)$ vanishes, so does $m_{\ssp}(\psi(V_0))$.

\subsection{Gluing theorem for $\P$-thickened curves - Proof of Theorem \ref{second}}

In this section, we prove the a gluing theorem for the $\P$-thickened curves. We use the same notations as in the previous section. We will moreover assume that all the auxiliary marked points $p_i$ are not real, i.e that $l = 0$. This hypothese is not essential and can be forgotten in the \S\S \ref{pargl} and \ref{parlin} but simplifies some of the arguments that appear in \S \ref{orsection}.

\begin{theo}\label{gluing}
Let $\P\subset \pert$ be a $\Cl$-regularizing space and $V_{\P}\subset \crmdp$ a neighborhood of $\Cl$ given by Proposition \ref{surj}. Then there exists a neighborhood $U_{\P}\subset V_{\P}$ of $\Cl$, an open set $\mathcal{U}\subset \R^{\dim \P +1}$ and an homeomorphism $\gl : \mathcal{U} \rightarrow U_{\P}$ such that for any choice of an orientation on $U_{\P}\cap \rmdp$ given by the Proposition \ref{porient}, there exists an orientation of $\mathcal{U}$ such that the homeomorphism $\gl$ preserves the orientations.
\end{theo}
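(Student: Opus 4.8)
The plan is to prove Theorem \ref{gluing} by a gluing construction of the usual type -- pre-gluing followed by a Picard (Newton) correction -- carried out $c_X$-equivariantly, and then to propagate the orientations through it. First I would record the combinatorial type of $\Cl=[\Sigma,j_\Sigma,c_\Sigma,\up,u_0,t_0]$: the stable $n$-marked curve $(\Sigma,j_\Sigma,c_\Sigma,\up)$ has dual graph a tree, $c_\Sigma$ acts on it, and the nodes split into \emph{real nodes} (those fixed by $c_\Sigma$, which lie on $Fix(c_\Sigma)$) and \emph{conjugate pairs of nodes}. To each node I attach a complex smoothing parameter, subject to the reality constraints that conjugate nodes carry conjugate parameters and a real node carries a real one; let $A$ be the resulting real vector space of gluing data $\mathbf a$, of dimension $\#\{\text{real nodes}\}+2\#\{\text{conjugate pairs}\}$. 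For small $\mathbf a$ one forms the pre-glued real $n$-marked stable rational curve $\Sigma_{\mathbf a}$ (depending also on a small deformation $\mathbf b$ of the remaining moduli) together with an approximately holomorphic map $u_0\#_{\mathbf a}\colon\Sigma_{\mathbf a}\to X$ still intertwining the two involutions; since every element of $\P$ is supported away from $Sing(\Uk)$, the perturbation term vanishes on the neck regions, so the pre-gluing estimates are the classical ones. The hypothesis that $\P$ is $\Cl$-regularizing (surjectivity of $\vD_{u_0}$), together with the stability of the marked domain and the fact that $u_0$ is an immersion transverse to each $H_i$ at $p_i$, produces a right inverse for the linearized operator on $\Sigma_{\mathbf a}$ that is uniformly bounded as $\mathbf a\to 0$; the Picard lemma then furnishes, for every small $(\mathbf a,\mathbf b)$, a unique nearby $\P$-thickened pseudo-holomorphic curve $\gl(\mathbf a,\mathbf b)$ meeting the $H_i$ transversally and satisfying $u\circ c_S=c_X\circ u$. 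Counting parameters -- $\dim A$ from the nodes and the remaining $\dim\P+1-\dim A$ from the unobstructed deformations of the components, the map, the $\cyl$-factor and $\P$ -- this defines $\gl$ on an open set $\mathcal U\subset\R^{\dim\P+1}$ with $\gl(0,0)=\Cl$.

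\textbf{Homeomorphism.} Next, after shrinking, $\gl$ is a homeomorphism onto a neighbourhood $U_\P$ of $\Cl$ in $\crmdp$: continuity is built in, injectivity is the uniqueness clause of the Picard lemma, and surjectivity onto a neighbourhood follows from properness of $\pi_\P$ together with Gromov compactness (any thickened curve close enough to $\Cl$ is $C^0$-close to some $u_0\#_{\mathbf a}$ and hence equals $\gl(\mathbf a,\mathbf b)$ by uniqueness), continuity of the inverse being then formal. Let $\mathcal U^\ast\subset\mathcal U$ be the open dense set where every coordinate of $\mathbf a$ is nonzero; there $\gl$ lands in the irreducible thickened curves, so by Proposition \ref{surj} it restricts to a diffeomorphism of $\mathcal U^\ast$ onto $U_\P\cap\rmdp=W_\P$, while $\mathcal U\setminus\mathcal U^\ast$ is a union of coordinate subspaces that are of real codimension $1$ at real nodes and of real codimension $2$ at conjugate pairs of nodes.

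\textbf{Orientations.} It remains to orient $\mathcal U$. Pull back the orientation of $W_\P$ from Proposition \ref{porient} along $\gl$ to get an orientation of $\mathcal U^\ast$; the one thing to prove is that it extends continuously across $\mathcal U\setminus\mathcal U^\ast$, and over the codimension-$2$ strata there is nothing to check. The genuine content is the extension across each codimension-$1$ wall $\{a_e=0\}$ coming from a real node $e$. To treat it I would use the linear-gluing description of the determinant lines. Recall that the orientation of $W_\P$ was assembled in Proposition \ref{porient} out of the orientation of $\ddet(D_u)$ induced by the oriented real $Spin$ structure $\ssp$ on $(TX,dc_X)$ (Proposition \ref{detd}), the canonical orientation of $\ddet(\DB_\tau)$ (Proposition \ref{dbj}), and the evaluation sequences $(\ref{dn})$, $(\ref{dtau})$, $(\ref{exa})$. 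As $\Sigma_{\mathbf a}$ degenerates to $\Sigma$ there are canonical isomorphisms $\ddet(D_u)\cong\ddet(D_{u_0})\otimes\bigotimes_{\text{nodes}}\mathcal L_{\text{node}}$ and $\ddet(\DB_\tau)\cong\ddet(\DB_{j_\Sigma})\otimes\bigotimes_{\text{nodes}}\mathcal L'_{\text{node}}$, with $\mathcal L_{\text{node}}$ and $\mathcal L'_{\text{node}}$ the one- (real node) or two- (conjugate pair) dimensional lines of gluing parameters; the key point is that a real $Spin$ structure on $(u_0^\ast TX,dc_X)$ over $\Sigma$ restricts to the components and re-glues compatibly with these isomorphisms, which reduces the computation at a real node to the model bundle $(\TC^2,c_{\TC})$ over a union of two real spheres meeting at a real point. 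In that model the effect of letting $a_e$ pass through $0$ -- which changes the topology of the real locus $u(\R S)$ near $u_0(e)$ -- is exactly the action on the orientations of $\ddet$ of the automorphisms $m$ and $t$ of Lemma \ref{recall}, and the way $\zeta_{u(\R S)}$ and the twist $(-1)^{m(u)+1}$ are set up in Lemma \ref{pintriv} makes the two sides agree. Hence the orientation extends over all of the (after shrinking, connected) set $\mathcal U$, and $\gl$ is orientation preserving for it. The main obstacle is precisely this last step: writing down the linear-gluing isomorphisms for the real determinant lines and checking that the $Pin^{\pm}$/Johnson bookkeeping of Lemmas \ref{lifts} and \ref{pintriv} cancels exactly the sign that smoothing a real node would otherwise introduce -- the analytic gluing of the first two parts being routine; the hypothesis $l=0$ enters only to streamline this orientation computation.
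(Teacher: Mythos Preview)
Your gluing/homeomorphism outline (pregluing, uniform right inverse from the $\Cl$-regularizing hypothesis, Picard iteration, surjectivity via Gromov compactness) matches the paper's Theorem~\ref{homeo} and is fine at this level of detail. The substantive divergence is in the orientation argument, and there your proposal has a genuine gap.

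The paper does \emph{not} pull back the orientation from $W_\P$ to $\mathcal U^\ast$ and then attempt to extend it across the real-node walls. Instead it equips $\mathcal U\subset\rmook\times\cyl\times\ker(\vD_{u_0})$ with an a priori product orientation---Ceyhan's orientation on $\rmook$ (which is already defined on the compactified moduli space, hence across all walls), the chosen orientation on $\cyl$, and an orientation on $\ker(\vD_{u_0})$ built from a carefully chosen trivialisation $\Phi:(\TC^2,c_{\TC})\to(u_0^\ast TX,dc_X)$ respecting the oriented real $Spin$ structure component-by-component. It then exploits the commuting triangle \eqref{triangle} to reduce the check that $\gl$ is orientation preserving to two independent statements: Lemma~\ref{orbase} (the base orientations agree with Ceyhan's) and Proposition~\ref{orfiber} (the fibrewise differential $d_{(0,0)}\gl_{\at}$ preserves the orientation on $\ker(\vD_{u_0})$). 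The fibre check is carried out via a linear-gluing isomorphism (Proposition~\ref{lingl}) whose independence of $P_0$ and $\uk$ and homotopy invariance are established in some detail, and is ultimately reduced to the trivial-bundle computation of Lemma~\ref{trivgl}. The point is that the global orientation on $\mathcal U$ is defined first, and compatibility is checked pointwise on $\mathcal U^\ast$; no wall-crossing analysis is needed because Ceyhan's orientation and the $\Phi$-orientation on $\ker(\vD_{u_0})$ are both defined once and for all.

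Your approach instead requires comparing the $W_\P$-orientations on the two sides of each wall $\{a_e=0\}$, and here the argument you give is only a heuristic. The assertion that crossing a real-node wall acts on $\ddet$ exactly as the automorphisms $m$ and $t$ of Lemma~\ref{recall}, and that this is then cancelled by the $\zeta_{u(\R S)}$/$(-1)^{m(u)+1}$ bookkeeping of Lemma~\ref{pintriv}, is precisely the content that needs proof; it amounts to the combined statements of Proposition~\ref{lingl}, Remark~\ref{remspin}, and Lemma~\ref{trivgl}, and it requires distinguishing the two kinds of real nodes in Figure~\ref{nodesfig} (isolated versus non-isolated), which behave differently. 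You also assert that $\gl|_{\mathcal U^\ast}$ is a diffeomorphism, but the construction only yields a homeomorphism whose restriction to each fibre of $\pi_{\mathcal U}$ is $C^1$ (Remark~\ref{difffiber}); this is exactly why the paper argues via the triangle \eqref{triangle} rather than by differentiating $\gl$ globally. In short, your strategy is dual to the paper's and could be made to work, but the wall-crossing step is where all the difficulty lies, and you have not supplied it.
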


Let us first explain how to conclude the proof Theorem \ref{second}.

\begin{proof}[Proof of Theorem \ref{second}]
For each reducible curve $\Cl\in\crmdod$ we can apply Theorem \ref{gluing} and take $U_{\P}$ such that $U_{0} = \pi_{\P}^{-1}(\cyl)$ is a black box around the $\P$-thickened curve $\Cl$. Moreover, we can see $U_0$ as the zero set of the natural section $\sigma_{\P}$ of the trivial bundle $U_{\P}\times \P$ over $U_{\P}$ induced by the projection $\pi_{\P}$. 

Let $W_0 = U_0\cap\rmdod$. Then $\gl^{-1}(W_0)\subset \mathcal{U}$ is a $1$-dimensional topological submanifold of $\mathcal{U}$ which can also be seen as the zero set of the section $\sigma_{\P}\circ \gl$ of the bundle $\mathcal{U}\times \P$ over $\P$. Moreover, if we fix an oriented real $Spin$ structure on $(TX, d c_X)$ and an orientation of $\P$, this submanifold becomes oriented in, a priori, two different ways : as the preimage of $W_0$ and as the zero set of $\sigma_{\P}\circ \gl$. However, the Theorem \ref{gluing} implies that those two orientations coincide. In particular, counting the elements of the boundary of $\gl^{-1}(U_0)$ using the orientation coming from the section $\sigma_{\P}\circ \gl$ gives $m_{\ssp,\P}(U_0)$.

 Using for example a result of Kirby and Siebenman (\cite{kirbysieb}, Theorem 10), we can perturb the section $\sigma_{\P}\circ \gl$, keeping it unchanged near the boundary of $\gl^{-1}(U_0)$, to get a new section whose zero set is now a compact oriented $1$-dimensional submanifold of $\mathcal{U}$ which has the same boundary as $\gl^{-1}(U_0)$. In particular, the signed count of the elements of this boundary set is zero, i.e. $m_{\ssp,\P}(U_0) = 0 = m_{\ssp}(\psi(U_0))$.

In particular, every reducible curve admits a black box with vanishing contribution. As we explain at the end of \S \ref{blackb} this proves Theorem \ref{second}.
\end{proof}

Some parts of the proof of Theorem \ref{gluing} are well-known so we will not give the full proof of it. We will rather give references to proofs of those statements when we found them and give a proof when we did not.

\subsubsection{The gluing map}\label{pargl}

We will break down Theorem \ref{gluing} into two parts. The first one concerns the existence of the homeomorphism $\gl$ which we explain here. The second one concerns the statement about the orientations, which we will treat in the \S \ref{orsection}.

\begin{theo}\label{homeo}
  Let $\P\subset \pert$ be a $\Cl$-regularizing space and $V_{\P}\subset \crmdp$ a neighborhood of $\Cl$ given by Proposition \ref{surj}. Then there exists a neighborhood $U_{\P}\subset V_{\P}$ of $\Cl$, an open set $\mathcal{U}\subset \rmok\times \cyl\times \ker(\vD_{u_0})$ and an homeomorphism $\gl : \mathcal{U} \rightarrow U_{\P}$ such that the diagram
\begin{equation}\label{triangle}
\xymatrix{\mathcal{U} \ar[rr]^{\gl} \ar[rd]_{\pi_{\mathcal{U}}} & & U_{\P} \ar[ld]^{\pi_{\rmok\times\cyl}} \\
&\rmok \times \cyl&
}
\end{equation}
commutes, where $\pi_{\rmok\times \cyl}$ is the natural map obtained by keeping only the source of the curve and the almost-complex structure on $X$.
\end{theo}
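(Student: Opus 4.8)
The plan is to prove this by a gluing construction of a by-now standard type, adapted to the real-equivariant and $\P$-thickened setting; the analytic core is well documented (see e.g.\ Chapter~10 of \cite{MDS}, and \cite{pardon} for the thickened formalism), so I would only carry out the construction and isolate the estimates on which it rests. The one structural simplification is that the elements of $\P$ are supported away from the nodes of $\Sigma$: near each node the defining equation is the honest $\DB$-equation and $u_0$ is honestly $\delta(t_0)$-holomorphic there, so the thickening plays no role in the neck analysis. As announced, the hypothesis $l=0$ is not needed here.

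\emph{Source side.} First I would set up the source. By Ceyhan's description recalled above, a neighbourhood $B$ of $[\Sigma,j_\Sigma,c_\Sigma,\up]$ in $\rmok$ carries the restriction of the universal curve $\Uk$, a family of $n$-marked real rational curves with central fibre $\Sigma$ and real structure $c_{\Uk}$; since $(\Sigma,j_\Sigma,c_\Sigma,\up)$ has no automorphisms, $B$ is a smooth manifold of dimension $n-3$. I would fix plumbing coordinates identifying $B$ with a product of one (half-)disk per node of $\Sigma$, on which the real structure acts by permuting and conjugating the gluing parameters, so that for each $\rho\in B$ the fibre $\PS_\rho$ agrees with $\Sigma$ outside a fixed neck neighbourhood of each node; I would also fix a $c_{\Uk}$-invariant metric on the total space and equivariant cutoff functions supported on the necks.

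\emph{Pregluing, uniform right inverse, Newton iteration.} Given $(\rho,t,\xi)$ with $\rho\in B$ small, $t$ near $t_0$ in $\cyl$, and $\xi=(v,\de)\in\ker(\vD_{u_0})$ small, I would set $u^\xi:=\exp^{g_X}_{u_0}(v)$ on $\Sigma$; since $\Dn_{u_0}v=\de$ vanishes near the nodes, $u^\xi$ is holomorphic there up to a term quadratic in $v$ and its two branch values at each node agree, so interpolating across the necks produces an approximate thickened curve $(\PS_\rho,u^{\mathrm{pre}}_{\rho,t,\xi},t,e_{\rho,\xi})$, with $e_{\rho,\xi}$ the section $\de$ restricted to $\PS_\rho$ (unambiguous because $\de$ vanishes near the nodes). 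A routine estimate gives that the error $\Phi_{\DB}(u^{\mathrm{pre}}_{\rho,t,\xi},\PS_\rho,t,e_{\rho,\xi})$ is supported on the necks and is small (of order $|\rho|^{1/p}$ in $L^{k-1,p}$), and the construction is done equivariantly so that everything lands in the $+1$-eigenspaces. The one place the $\Cl$-regularizing hypothesis is used is in propagating surjectivity: for $\rho$ small the linearization $\vD_{u^{\mathrm{pre}}_{\rho,t,\xi}}$ over $\PS_\rho$ (with the $\uh$-constraints, the $\P$-variable, and the $+1$-eigenspace condition) is surjective and admits a right inverse bounded uniformly as $\rho\to 0$; I would build it by patching a right inverse of $\vD_{u_0}$ away from the necks with the inverse of the model Cauchy–Riemann operator on the neck cylinders — invertible on suitably weighted spaces because the nodes join two branches at single points and the fibres have genus $0$ — the patching error being $O(|\rho|)$, so that a Neumann series inverts the result, all ingredients being chosen equivariantly. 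With this uniform right inverse, the standard quadratic estimate for the nonlinearity and the smallness of the error, the implicit function theorem then produces, for each $(\rho,t,\xi)$ in a neighbourhood $\mathcal{U}$ of $(0,t_0,0)$, a unique small correction yielding a genuine $\P$-thickened pseudo-holomorphic curve $\gl(\rho,t,\xi)\in\crmdp$; by construction $\gl$ commutes with the projection to $\rmok\times\cyl$, so \eqref{triangle} commutes.

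\emph{Homeomorphism and the main difficulty.} Finally I would check that $\gl$ is a homeomorphism onto an open neighbourhood $U_\P$ of $\Cl$. Continuity of $\gl$ and of its inverse follows from the uniform estimates together with the uniqueness clause of the implicit function theorem; injectivity follows from that uniqueness and the rigidity of the marked sources in $\rmok$; and surjectivity onto a neighbourhood — that no nearby thickened curve is missed — is the familiar consequence of Gromov compactness and properness of $\pi_\P$: a sequence of $\P$-thickened curves converging to $\Cl$ eventually has source in $B$ and map $C^0$-close to $u_0$, hence coincides with some $\gl(\rho,t,\xi)$ by uniqueness. The hard part is the uniform surjectivity of the linearization across the degenerating necks, together with carrying the entire construction out equivariantly under $c_{\Uk}$ and $c_X$ (and, when $l>0$, handling the real nodes); everything else is bookkeeping or a direct appeal to the references.
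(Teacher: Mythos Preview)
Your proposal is correct and follows essentially the same route as the paper: the paper likewise does not give a detailed proof but refers to \cite{MDS} Chapter~10 and \cite{pardon} Appendix~B, and sketches the construction via equivariant pregluing of $u_0$ over the glued curves $\Sigma_{\at}$, weighted Sobolev spaces, a uniformly bounded right inverse for the linearizations $D_{\at,y,t}$, and a Newton/implicit-function correction $\phi_{\at,y,t}$, with the homeomorphism property checked exactly as you describe. The only cosmetic differences are that the paper preglues $u_0$ first and then preglues the kernel section $\xi$ (rather than exponentiating $\xi$ on $\Sigma$ and then pregluing the resulting map), and works throughout in $L^{k,2,\epsilon}$ weighted spaces \`a la Pardon rather than $L^{k,p}$; neither affects the argument.
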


We will not give a detailed proof of this theorem, but we will rather refer the reader to \cite{MDS} Chapter 10, and \cite{pardon} Appendix B. We will however recall the construction of the map $\gl$ in the spirit of \cite{pardon} because we will need it to prove the second part of Theorem \ref{gluing}.

Recall that the source of the curve $\Cl$ is a stable real rational curve $(\Sigma, j_{\Sigma}, c_{\Sigma}, p_1,\ldots, p_n)$. Let us choose once and for all a numbering $\Sigma_1,\ldots,\Sigma_{m+1}$ of the irreducible components of $\Sigma$ and a numbering $n_1,\ldots,n_m\in\Sigma$ of the nodes of $\Sigma$. A neighborhood of this curve in $\rmok$ can be described as follows.

 Around each node and on each branch choose a small disk centered at the node in such a way that the whole collection of disks is globally invariant by $c_{\Sigma}$ and does not contain any of the marked points. Moreover, we fix holomorphic parameterizations of those disks by the unit disk $\Delta\subset \C$ with $0$ sent to the node and such that the restriction of $c_{\Sigma}$ on each neighborhood is given on $\Delta$ by the standard conjugation on $\C$ (note that the way $c_{\Sigma}$ permutes the disks is hidden in the fact that we parameterized them all using the same $\Delta$). 

We now consider the polar coordinates $(r,\theta) \in [0,+\infty[\times S^1\mapsto \e^{-r-i\theta}\in \Delta$ on $\Delta\setminus\{0\}$. For each node, fix a gluing parameter $\alpha_j= \e^{-6R_j + i\Theta_j}\in \C$, $R_j\in [0,+\infty[$ and $\Theta_j\in S^1$. Construct the surface $\Sigma_{\at}$, $\at = (\alpha_1,\ldots,\alpha_m)$, by first removing the part $]6R_j,\infty[\times S^1$ on each branch and for each node $n_j$ and then gluing the remaining parts of the two branches of each node $n_j$ by using the map $(r,\theta)\mapsto (6R_j-r,\Theta_j-\theta)$.

The real structure $c_{\Sigma}$ induces a permutation $\sigma$ of $\{1,\ldots,m\}$ of order $2$ by acting on the nodes of $\Sigma$, and an involution on $\C^m$ given by $(\alpha_1,\ldots,\alpha_m)\in \C^m\mapsto (\overline{\alpha_{\sigma(1)}},\ldots,\overline{\alpha_{\sigma(m)}})\in \C^m$. The fixed points set $\GP$ of the latter involution corresponds to the gluing parameters $\at$ for which $c_{\Sigma}$ induces an orientation reversing involution $c_{\at}$ on $\Sigma_{\at}$. We will call this set the set of real gluing parameters. The situation can be summarized as in Figure \ref{nodesfig}. In particular, the set of real gluing parameters is a product of real lines, one for each node that is fixed by $c_{\Sigma}$, and of real planes, one for each pair of nodes which are exchanged by $c_{\Sigma}$.

\begin{figure}
  \centering
  \input{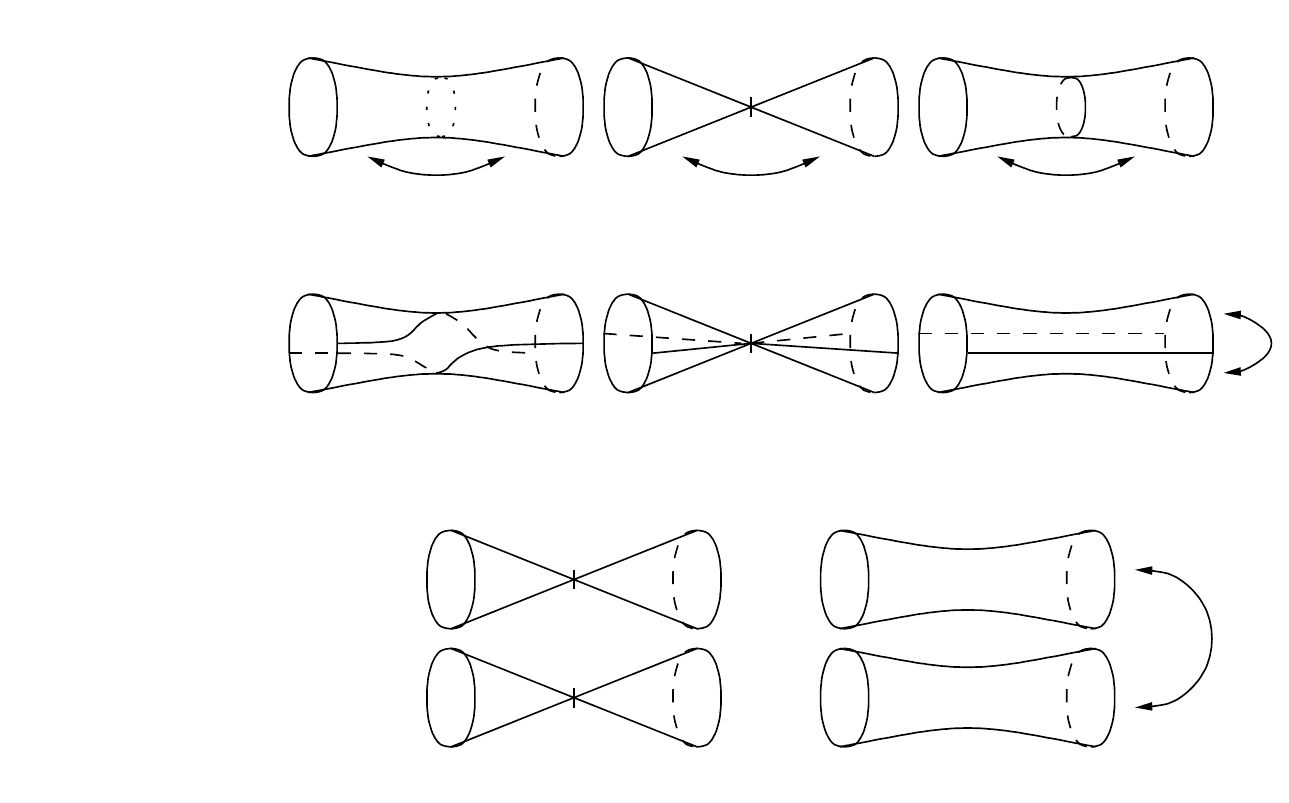_t}
  \caption{The three types of nodes and their gluing}
  \label{nodesfig}
\end{figure}

Finally, we fix an $n-m-3$-dimensional family $(j_y)_{y\in \mathcal{Y}}$ of complex structures on $\Sigma$ parameterized by an open neighborhood $\mathcal{Y}$ of $0$ in $\R^{n-m-3}$ such that
\begin{itemize}
\item $j_0 = j_{\Sigma}$,
\item $d c_{\Sigma} \circ j_y = - j_y \circ d c_{\Sigma}$ for all $y\in \mathcal{Y}$,
\item all those structures are equal to $j_{\Sigma}$ on the previously considered disks around the nodes,
\item and the map $y \in \mathcal{Y}\mapsto (\Sigma,j_y,c_{\Sigma},p_1,\ldots,p_n) \in \rmok$ is a diffeomorphism onto its image, which is a neighborhood of $(\Sigma,j_{\Sigma},c_{\Sigma},p_1,\ldots,p_n)$ in the stratum of $\rmok$ parameterizing the curves with the same topological type as $\Sigma$.
\end{itemize}
The complex structures $(j_y)_{y\in\mathcal{Y}}$ all induce complex structures on the surfaces $\Sigma_{\at}$, again denoted by $j_y$ and for which the involution $c_{\at}$ is anti-holomorphic. The map $(\at,y)\in \GP \times \mathcal{Y}\mapsto (\Sigma_{\at},j_y,c_{\at},p_1,\ldots,p_n)\in \rmok$ is then a diffeomorphism from a neighborhood of $(0,0) \in \GP \times \mathcal{Y}$ onto a neighborhood of $(\Sigma,j_y,c_{\Sigma},p_1,\ldots,p_n)$ in $\rmok$.

Using a riemannian metric on $X$ which is $c_X$ invariant and for which $\uh$ is totally geodesic, for each real gluing parameter $\at\in \GP$ we can define, as in Definition B.3.2 of \cite{pardon}, the pregluing $u_{\at}: (\Sigma_{\at},c_{\at}) \rightarrow (X,c_X)$ of $u_0$. Taking $\nabla$ to be the Levi-Civita connection associated to the previous metric, we set $\nabla^t = \nabla - \delta(t)\circ \nabla\circ \delta(t)$ for each $t\in\cyl$ in a neighborhood of $t_0$. Using the parallel transports associated to those connections, for each $t$ in a neighborhood of $t_0$ and each $\at\in\GP$, we define the pregluing $\xi_{\at,t}$ of a real section $\xi$ of $u_0^*TX$ as in Definition B.3.3 of \cite{pardon}, which is a real section of $u_{\at}^*TX$.

Fix a real number $\epsilon\in ]0,1[$ and an integer $k\geq 2$. Let us consider for each real gluing parameter $\at\in \GP$, each complex structure $j_y$ and each $t\in \cyl$ in a neighborhood of $t_0$, the weighted Sobolev spaces $L^{k,2,\epsilon}(\Sigma_{\at},u_{\at}^*TX;\uh)_{+1}$ and $L^{k-1,2,\epsilon}(\Sigma_{\at}, \Lambda^{0,1}\Sigma_{\at} \fo{j_y}{\delta(t)} u_{\at}^*TX)_{+1}$ as in Definitions B.4.2 and B.4.3 of \cite{pardon}. Roughly speaking, the first space consists of real sections $\xi$ of $u_{\at}^*TX$ such that for all $i$ from $1$ to $n$, $\xi_{p_i} \in T_{u_0(p_i)}H_i$ and such that $\xi$ converges exponentially fast to its value at the middle of the necks where the gluing happens, and the second one consists of real $(0,1)$-forms with value in $u_{\at}^*TX$ converging exponentially fast to an exponentially small value at the middle of the necks.

One can express the perturbed Cauchy-Riemann equations using those spaces. More precisely, define, as in \S B.5 of \cite{pardon}, the function
\[
\begin{array}{c}
\F_{\at,y,t} : L^{k,2,\epsilon}(\Sigma_{\at},u_{\at}^*TX;\uh)_{+1}\oplus \P \rightarrow L^{k-1,2,\epsilon}(\Sigma_{\at}, \Lambda^{0,1}\Sigma_{\at} \fo{j_y}{\delta(t)} u_{\at}^*TX)_{+1} \\
(\xi,e)\mapsto \pt^t_{\ex_{u_{\at}} \xi\rightarrow u_{\at}}(\DB_{j_y,\delta(t)}(\ex_{u_{\at}} \xi) - e(.,\ex_{u_{\at}}\xi,t)),
\end{array}
\]
where $\ex$ is the exponential map associated to the previous riemannian metric, $\pt^t_{\ex_{u_{\at}} \xi\rightarrow u_{\at}}$ is the parallel transport associated to the connection $\nabla^t$, sending sections of $(\ex_{u_{\at}} \xi)^*TX$ to sections of $u_{\at}^*TX$ and $\DB_{j_y,\delta(t)}(u) = d u + \delta(t)\circ d u \circ j_y$ is the standard Cauchy-Riemann operator. We denote by $D_{\at,y,t}$ the derivative at $(0,0)$ of $\F_{\at,y,t}$.

Note that by the assumption on $\P$, the operator $D_{0,0,t_0}$ is surjective. In fact, as is proved in Proposition B.7.9 of \cite{pardon}, for all $(\at,y)$ small enough and all $t$ close enough to $t_0$, the operators $D_{\at,y,t}$ admit a right inverse which is bounded independently of $\at,y$ and $t$.

Then for each gluing parameter $\at\in\GP$ small enough, each complex structure $y\in \mathcal{Y}$ and each $t\in\cyl$ close to $t_0$, one constructs a map $\phi_{\at,y,t} : \ker(\vD_{u_0})\rightarrow L^{k,2,\epsilon}(\Sigma_{\at},u_{\at}^*TX;\uh)_{+1}\oplus \P$ which is defined in a neighborhood of $0$, and satisfies for all $(\xi,e)\in\ker(\vD_{u_0})$ small enough $\F_{\at,y,t}((\xi_{\at,t},e)+\phi_{\at,y,t}(\xi,e)) = 0$ (see Proposition B.9.2 of \cite{pardon}). Moreover, the map $\phi_{\at,y,t}$ is $C^1$ (see Proposition 24 in \cite{floermonop}). The gluing map is then given by
\[
\gl(\at,y,t,(\xi,e)) = ((\Sigma_{\at},j_y,c_{\at},\up), \ex_{u_{\at}}(\xi_{\at,t}+ pr_1(\phi_{\at,y,t}(\xi,e))) , t, e+ pr_2(\phi_{\at,y,t}(\xi,e))),
\]
where $pr_i$, $i=1,2$, are the projections from $L^{k,2,\epsilon}(\Sigma_{\at},u_{\at}^*TX;\uh)_{+1}\oplus \P$ to the respective factors. The fact that $\gl$ is an homeomorphism from a neighborhood of $((\Sigma,j_{\Sigma},c_{\Sigma},\up), t_0, 0)\in \rmok\times \cyl\times \ker(\vD_{u_0})$ to a neighborhood of $\Cl\in V_{\P}$ can be proved as in sections B.10, B.11 and B.12 of \cite{pardon}.

\begin{remark}\label{difffiber}
  Note that since the map $\phi_{\at,y,t}$ is $C^1$, the restriction of $\gl$ to a fiber of $\pi_{\mathcal{U}}$ is also $C^1$. 
\end{remark}

\subsubsection{The linear gluing}\label{parlin}

Before we tackle the orientation statement in Theorem \ref{gluing}, we need to recall the linear gluing procedure adapted to our setting. Let us use the notations of \S \ref{pargl}, that is $(\Sigma = \bigcup_{i=1}^{m+1} \Sigma_i,j_{\Sigma},c_{\Sigma},\up)$ is a stable real rational curve with nodes $\un = (n_1,\ldots,n_m)$, and for each real gluing parameter, we denote by $(\Sigma_{\at},c_{\at})$ the corresponding glued surface. We will also denote the normalization of the surface $\Sigma_{\at}$ by $\tS_{\at}$. Let us moreover fix for each node $n_i\in \Sigma$ an orientation, that is we decorate the two preimages of $n_i$ in the normalization $\tS$ with a sign to get $n_i^+$ and $n_i^-$; we do so while respecting the real structure $c_{\Sigma}$, i.e. if $n_i$ and $n_j$ are complex conjugated nodes, then $c_{\Sigma}(n_i^{\pm}) = n_j^{\pm}$.

Suppose now that we are given an hermitian vector bundle $E$ over $\tS$, equipped with a real structure $c_{E}$, which is the pullback of an hermitian vector bundle on $\Sigma$. For each node $n_j$ choose an hermitian vector bundle $E_j$ over $\Delta^2$ of the same rank as $E$ and $\C$-antilinear isomorphisms between $E_j$ and $E_{\sigma(j)}$ lifting the conjugation if $n_j$ is complex or real and non-isolated, and lifting the involution $(x,y)\in \Delta^2\mapsto (\overline{y},\overline{x})\in\Delta^2$ if $n_j$ is real and isolated. Then, using the maps $\eta_{\alpha_j} : (r,\theta)\in [0,6 R_j]\times S^1 \mapsto (\e^{-r-i\theta},\e^{-6R_j+r+i\theta+i\Theta_j})\in \Delta^2$ if $\alpha_j\neq 0$ (resp. $\eta^-_{0} : (r,\theta)\in [0, +\infty)\times S^1 \mapsto (\e^{-r-i\theta},0)\in\Delta^2$ on the negative side of the node $n_i$ and $\eta^+_{0} : (r,\theta)\in [0, +\infty)\times S^1 \mapsto (0,\e^{-r-i\theta})\in\Delta^2$ on the positive side), we assume moreover that the restriction of $\eta_{\alpha_j}^*E_j$ to $[0,R-1]\times S^1$ is exactly $E$ when $\alpha_j\neq 0$ (resp. $(\eta_{0}^{\pm})^*E_j = E$). Using those, we can define a family of hermitian vector bundles $E_{\at}$ over $\tS_{\at}$ equipped with real structures $c_{E_{\at}}$, for all $\at\in\GP$. We also choose trivializations of the bundles $E_j$ over the bidisks, in such a way that they are compatible with the real stuctures. Those induce trivializations of the $E_{\at}$ on the gluing region.

Choose a finite dimensional vector subspace $P_0$ of $L^{k-1,2}(\tS,\Lambda^{0,1}\tS \otimes E)_{+1}$ such that all the elements of $P_0$ vanish on the gluing disks. Then $P_0$ naturally induces a family a finite dimensional vector subspaces $P_{\at}$ of $L^{k-1,2}(\tS_{\at},\Lambda^{0,1}\tS_{\at} \otimes E_{\at})_{+1}$, all of the same dimension as $P_0$. Here, the complex structure on $\tS_{\at}$ is the one coming from $j_{\Sigma}$.

For each $i\in\{1,\ldots,n\}$, choose a complex vector subspace $K_i$ of the fiber $E_{p_i}$ in such a way that for $i\in \{1,\ldots,s\}$, $c_{E}(K_i) = K_{s+i}$ and for $i\in \{2s+1,\ldots,2s+l\}$, $c_{E}(K_i) = K_{i}$, and let us write $\uk = (K_1,\ldots,K_n)$. Note that $\uk$ is still a family of vector subspaces of the fibers of $E_{\at}$ over the points $p_i$. Let us denote by $L^{k,2}(\tS_{\at},E_{\at};\uk)_{+1}$ the subspace of $L^{k,2}(\tS_{\at},E_{\at})$ consisting of real sections $v$ which satisfy $v_{p_i}\in K_i$ for each $i\in\{1,\ldots,n\}$. We also set $E_{\up}/\uk$ to be the direct sum of the quotient spaces $E_{p_i} / K_i$. It comes with a conjugation and we denote by $\R E_{\up}/\uk$ its fixed points set.

Fix a real generalized Cauchy-Riemann operator $\zD : L^{k,2}(\tS,E)_{+1}\rightarrow L^{k-1,2}(\tS,\Lambda^{0,1}\tS\otimes E)_{+1}$ and denote by $\kD$ its restriction to $L^{k,2}(\tS,E;\uk)_{+1}$. Let $\zDP :L^{k,2}(\tS,E)_{+1}\oplus P_0\rightarrow L^{k-1,2}(\tS,\Lambda^{0,1}\tS\otimes E)_{+1}$ be the operator $\zD+\id$ and let $\kDP$ be its restriction to $L^{k,2}(\tS,E;\uk)_{+1}\oplus P_0$. Note that there is a natural isomorphism
\begin{equation}
  \label{detPz}
  \ddet(\zD) \otimes \det(P_0) = \ddet (\kDP) \otimes \Lambda^{\max}_{\R} \R E_{\up}/\uk.
\end{equation}

If $\ttS$ is an intermediate normalization of $\Sigma$ obtained by normalizing the nodes in the subset $\un'\subset\un$, we can restrict $\zD$ (resp. $\kDP$) on $\ttS$ to obtain an operator $\zD_{\un'} : L^{k,2}(\ttS,E)_{+1}\rightarrow L^{k-1,2}(\tS,\Lambda^{0,1}\tS\otimes E)_{+1}$ (resp. $\kDP_{\un'}:L^{k,2}(\ttS,E;\uk)_{+1}\oplus P_0 \rightarrow L^{k-1,2}(\tS,\Lambda^{0,1}\tS\otimes E)_{+1}$). Similarly, there is a natural isomorphism
\begin{equation}
  \label{detPnz}
  \ddet(\zD_{\un'}) \otimes \det(P_0) = \ddet (\kDP_{\un'}) \otimes \Lambda^{\max}_{\R} \R E_{\up}/\uk.
\end{equation}

On the other hand, the sum of the fibers of $E$ over the points of $\un'$ comes with a conjugation induced by $c_E$ and we denote by $\R E_{\un'}$ the set of fixed points of this conjugation. Note here that if $n_i$ is a real isolated node, then the induced conjugation on $E_{n_i}$ is given by $-c_E$; this is due to the fact that $c_{\Sigma}$ exchanges the branches of the node $n_i$. Using the map $ev_{\un'} :\ker(\kDP)\rightarrow \R E_{\un'}$ given by the difference of the evaluations of a section on both sides of the nodes $\un'$, we have an exact sequence
\begin{equation}
  \label{normal}
  0\rightarrow \ker(\kDP_{\un'})\rightarrow \ker(\kDP) \xrightarrow{ev_{\un'}} \R E_{\un'} \rightarrow \coker(\kDP_{\un'}) \rightarrow \coker(\kDP) \rightarrow 0,
\end{equation}
which combined with the isomorphisms \eqref{detPz} and \eqref{detPnz} gives the isomorphism
\begin{equation}
  \label{isonormal}
  \ddet(\zD) = \ddet(\zD_{\un'})\otimes \Lambda^{\max}\R E_{\un'}.
\end{equation}
The isomorphism \eqref{isonormal} does not depend on $\uk$ or $P_0$, and is invariant under the homotopy of $\zD$; i.e. if $(\zD_t)_{t\in [0,1]}$ is a homotopy of real generalized Cauchy-Riemann operators, then there is a commutative square
\[
\xymatrix{
 \ddet(\zD_0)\ar[r]\ar[d] & \ddet(\zD_{\un',0})\otimes \Lambda^{\max}\R E_{\un'}\ar[d]\\
 \ddet(\zD_1)\ar[r] & \ddet(\zD_{\un',1})\otimes \Lambda^{\max}\R E_{\un'},
}
\]
where the horizontal maps are the isomorphisms \eqref{isonormal} and the vertical come from the trivialization of the determinant bundles over the homotopy.

Take a real gluing parameter $\at\in\GP$. We say that a real Cauchy-Riemann operator $\zDa : L^{k,2}(\tS_{\at},E_{\at})_{+1}\rightarrow L^{k-1,2}(\tS_{\at},\Lambda^{0,1}\tS_{\at}\otimes E_{\at})_{+1}$ is the gluing of $\zD$ if the two operators coincide on the complement of the gluing region and on the cylinders $[0,R_i-1]\times S^1$ and $[5R_i+1,6R_i]\times S^1$ for each $i\in\{1,\ldots,m\}$, and if $\zDa$ restricts to the standard Cauchy-Riemann operator on the cylinders $[R_i,5R_i]$ (in the fixed trivialization of $E_i$). Restricting $\zDa$ to $L^{k,2}(\tS_{\at},E_{\at};\uk)_{+1}$ we obtain operators $\kDa$ which we call the gluing of $\kD$. Let us also denote by $\zDPa : L^{k,2}(\tS_{\at},E_{\at})_{+1}\oplus P_{\at}\rightarrow L^{k-1,2}(\tS_{\at},\Lambda^{0,1}\tS_{\at}\otimes E_{\at})_{+1}$ the operator $\zDa+\id$ and $\kDPa$ its restriction to $L^{k,2}(\tS_{\at},E_{\at};\uk)_{+1}\oplus P_{\at}$. Note that there is a natural isomorphism
\begin{equation}
  \label{detPa} \ddet(\zDa) \otimes \det(P_{\at})= \ddet(\kDPa) \otimes \R E_{\up}/\uk.
\end{equation}

Let us give two examples. The first one comes directly from \S \ref{pargl}. The bundles $E_{\at}$ are given by the pullback bundles $u_{\at}^*TX$ (with the complex structure induced by $\delta(t_0)$) and the operators $\kDPa$ are induced by the operators $D_{\at,0,t_0}$.

For the second one, the bundles $E_{\at}$ are the trivial bundles $\TC^2$ over the surfaces $\tS_{\at}$ and the operators are all the standard ones. We will study this particular case in more detail in Lemma \ref{trivgl}.

For $\at\in\GP$, we will denote by $\una$ the subset of $\un$ consisting of the nodes $n_i$ for which the corresponding $\alpha_i$ is non-zero, i.e. the set of nodes which do not survive in $\Sigma_{\at}$.

The following proposition is an adaptation of an already known result to our particular setting (see Theorem 2.4.1 in \cite{ww}).

\begin{prop}\label{lingl}
Suppose that the restriction $\kDP_{\un}$ of $\kDP$ to $L^{k,2}(\Sigma,E;\uk)_{+1}\oplus P_0$ is surjective. Then, for $\at$ small enough, the operators $\kDPa$ are also surjective and there exists an exact sequence
  \begin{equation}
    \label{linseq}
0 \rightarrow \ker(\kDPa)\xrightarrow{\lPKa} \ker(\kDP) \xrightarrow{\fPKa}  \R E_{\una} \rightarrow 0,
  \end{equation}
which gives rise together with \eqref{detPz}, \eqref{isonormal} and \eqref{detPa} to an isomorphism
\begin{equation}\label{liniso}
\ddet(\zDa) = \ddet(\zD_{\un_{\at}}).
\end{equation}
This isomorphism up to homotopy does not depend on the choice of the space $P_0$ and on the subspaces $\uk$ and is invariant under the homotopy of the data $(\zD,\zDa)$.
\end{prop}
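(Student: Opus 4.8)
The plan is to reproduce, in the present real and $\P$-perturbed setting, the linear gluing argument behind Theorem 2.4.1 in \cite{ww} (see also \S B of \cite{pardon}), and to read off the determinant line identification \eqref{liniso} formally from the exact sequences already at our disposal. First I would observe that \eqref{liniso} is a purely homological consequence of the rest of the statement: if $\kDPa$ is surjective for $\at$ small and the sequence \eqref{linseq} is exact, then, since the domain $L^{k,2}(\Sigma,E;\uk)_{+1}\oplus P_0$ of the surjective operator $\kDP_{\un}$ sits inside the domain of every restriction $\kDP_{\un'}$ and of $\kDP$ itself, all these operators are surjective too; taking top exterior powers in \eqref{linseq} gives a canonical isomorphism $\ddet(\kDPa)=\ddet(\kDP)\otimes(\Lambda^{\max}_{\R}\R E_{\una})^{*}$, and feeding this into \eqref{detPa} and then using \eqref{detPz} together with the canonical identification $\det(P_{\at})=\det(P_0)$, one gets $\ddet(\zDa)=\ddet(\zD)\otimes(\Lambda^{\max}_{\R}\R E_{\una})^{*}$, which \eqref{isonormal} for the subset $\una$ rewrites as $\ddet(\zD_{\una})$. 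So the whole proof reduces to the surjectivity of $\kDPa$ and the exactness of \eqref{linseq}.

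For surjectivity I would fix a bounded right inverse $Q_0$ of $\kDP_{\un}$ and build an approximate right inverse of $\kDPa$ by the usual cut-off-and-paste recipe: choosing cut-off functions on the necks $[0,6R_i]\times S^1$ of $\Sigma_{\at}$ compatibly with the real structure --- i.e. compatibly with the permutation $\sigma$ of the nodes and with the standard conjugations on the model bidisks, and, at an isolated real node, chosen symmetric since there the relevant involution exchanges the two ends of the neck --- a pregluing of $Q_0$ yields a bounded operator $Q_{\at}$ into $L^{k,2}(\tS_{\at},E_{\at};\uk)_{+1}\oplus P_{\at}$ with $\|\kDPa Q_{\at}-\id\|\le\tfrac12$ once the neck lengths are large, the error decaying exponentially in $\min_i R_i$. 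This is the exact analogue of Proposition B.7.9 of \cite{pardon} in our setting; granting it, $\kDPa$ is surjective for $\at$ small, with the uniformly bounded right inverse $Q_{\at}(\kDPa Q_{\at})^{-1}$.

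For the kernel sequence I would first note that \eqref{normal} applied to $\un'=\una$, together with the surjectivity just proved for the relevant restriction, yields a short exact sequence $0\to\ker(\kDP_{\un})\to\ker(\kDP_{\un\setminus\una})\xrightarrow{ev_{\una}}\R E_{\una}\to 0$, where $\kDP_{\un\setminus\una}$ imposes matching only at the nodes that survive in $\Sigma_{\at}$ (so $\kDP_{\un\setminus\una}=\kDP$ when every gluing parameter is non-zero). It then suffices to produce a natural isomorphism $\ker(\kDPa)\cong\ker(\kDP_{\un})$ commuting with the evaluations at the $p_i$: composing it with the inclusion $\ker(\kDP_{\un})\hookrightarrow\ker(\kDP_{\un\setminus\una})$ gives $\lPKa$, and $\fPKa=ev_{\una}$ completes \eqref{linseq}. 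This isomorphism is the linearisation of the gluing map of \S\ref{pargl}: pregluing an element of $\ker(\kDP_{\un})$ over the necks gives an approximate element of $\ker(\kDPa)$, corrected to a genuine one by the right inverse above, while restricting an element of $\ker(\kDPa)$ away from the central parts $[R_i,5R_i]\times S^1$ of the necks and projecting onto $\ker(\kDP_{\un})$ is an inverse to it up to an error that tends to $0$ with $\at$; so for $\at$ small the two maps are mutually inverse. All of this takes place in the $+1$-eigenspaces, since every map in sight is equivariant.

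Finally, the independence of \eqref{liniso}, up to homotopy, of $P_0$ and $\uk$ follows by enlarging $P_0$ (resp. the $K_i$) to a common refinement and noting that this multiplies both sides of \eqref{detPz} and \eqref{detPa} by matching determinant factors with which \eqref{linseq} is compatible; and the invariance under a homotopy $(\zD^{(s)},\zDa^{(s)})_{s\in[0,1]}$ of compatible pairs of real generalized Cauchy-Riemann operators follows by running the above construction continuously in $s$ --- the cut-offs, right inverses and kernel gluings can be chosen to depend continuously on $s$, and the determinant bundles being continuous line bundles over the affine spaces of such operators, the isomorphism extends continuously, which is the asserted commuting square (this is also what will let one reduce the general case to the model case $E_{\at}=\TC^2$ of Lemma \ref{trivgl}). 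The analytic core --- the uniform bound on $Q_{\at}$ and the quadratic estimate making the preglued kernel map a local diffeomorphism --- is identical to the complex case, so I expect the only genuinely delicate point to be the equivariant bookkeeping: making sure every cut-off, right inverse and splitting respects the $+1$-eigenspace decompositions for the three types of nodes (conjugate pairs, real non-isolated, real isolated), and carrying the sign twist $-c_E$ attached to isolated real nodes consistently through \eqref{detPz}, \eqref{detPa}, \eqref{normal} and \eqref{linseq}.
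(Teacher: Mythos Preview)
Your overall strategy is the same as the paper's and is sound, but there are two points where your write-up departs from what is actually proved there, and one of them is a genuine slip.

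First, the dimensional bookkeeping in your kernel identification is off when some gluing parameters vanish. You claim a natural isomorphism $\ker(\kDPa)\cong\ker(\kDP_{\un})$ and then feed this into the sequence $0\to\ker(\kDP_{\un})\to\ker(\kDP_{\un\setminus\una})\xrightarrow{ev_{\una}}\R E_{\una}\to 0$. But $\kDPa$ lives on the \emph{normalization} $\tS_{\at}$ of $\Sigma_{\at}$, so imposes no matching at the surviving nodes $\un\setminus\una$; its kernel has the same dimension as $\ker(\kDP_{\una})$ (matching only at the glued nodes), not $\ker(\kDP_{\un})$ (matching at all nodes). In the trivial rank-$2$ model the dimensions are $2(m+1-|\una|)$ versus $2$, which disagree unless $\una=\un$. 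The correct statement is $\ker(\kDPa)\cong\ker(\kDP_{\una})$, which then slots into \eqref{normal} with $\un'=\una$ to give the sequence \eqref{linseq} with middle term $\ker(\kDP)$, exactly as in the paper. This is presumably a notational slip on your part, but as written it is wrong for general $\at$.

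Second, even after this fix, your construction of \eqref{linseq} is genuinely different from the paper's. You set $\fPKa=ev_{\una}$ outright and force $\lPKa$ to land in $\ker(\kDP_{\una})$ via the preglue-and-correct isomorphism. The paper instead builds $\lPKa$ by orthogonal projection onto $p_{\at}(\ker(\kDP))$ and then defines $\fPKa$ by a block decomposition $\kDPa=(D_{\at}^{ij})$ of the glued operator; its $\fPKa$ is \emph{not} $ev_{\una}$ but only converges to it as $\at\to 0$. Both routes give an exact sequence of the same shape and, because the paper's $\fPKa$ is homotopic to $ev_{\una}$, the induced determinant isomorphisms \eqref{liniso} agree. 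Your variant is arguably cleaner for the later Lemma~\ref{trivgl} (where one needs $\la$ close to the inclusion and $\fa$ close to $ev_{\un}$, which you get for free), at the cost of an extra gluing-isomorphism step.

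Finally, your treatment of the independence of $P_0$ is too brief. Saying that enlarging $P_0$ ``multiplies both sides by matching determinant factors with which \eqref{linseq} is compatible'' hides the real content: one must compare the two exact sequences \eqref{linseq} for $P_0\subset P_0'$ by an explicit commutative diagram, and the middle vertical map is \emph{not} the naive inclusion of kernels. The paper constructs an auxiliary automorphism $g_{\at}$ of $\ker(\kDPp)$, shows it is close to the identity using the uniform estimate \eqref{ineq} and the convergence $\fPKa\to ev_{\una}$, and concludes orientation-preservation from that. In your variant the analogous compatibility needs to be checked for the preglue-and-correct maps built from two different right inverses $Q_0\subset Q_0'$; this is doable, but it is a real argument, not a formality.
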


Note that thanks to the sequence \eqref{normal}, the condition that $\kDP_{\un}$ is surjective is equivalent to the fact that both the map $ev_{\un}$ and the operator $\kDP$ are surjective.

\begin{proof}
We use Sobolev spaces with negative exponential weight $-\epsilon\in ]-1,0[$ on the gluing regions. More precisely, for all $\at\in\GP$, take $w_{\at} : \tS_{\at}\rightarrow \R$ a smooth $\Z/2\Z$-invariant function taking value $1$ outside of the gluing regions, being equal to $\e^{-\epsilon r} + \e^{-\epsilon (6R_i - r)}$ on the cylinders $[R_i,5R_i]\times S^1$ for which $\alpha_i\neq 0$ and equal to $\e^{-\epsilon r}$ on the cylinders $[0,+\infty[\times S^1$ corresponding to the nodes of $\Sigma_{\at}$. We denote by $L^{k,2,-\epsilon}(\tS_{\at},E_{\at};\uk)_{+1}$ and $L^{k-1,2,-\epsilon}(\tS_{\at},\Lambda^{0,1}\tS_{\at}\otimes E_{\at})_{+1}$ the spaces of real sections $\xi$ of $E_{\at}$ (resp. $\Lambda^{0,1}\tS_{\at}\otimes E_{\at}$) such that the section $w_{\at}\xi$ is in $L^{k,2}(\tS_{\at},E_{\at};\uk)_{+1}$ (resp. $L^{k-1,2}(\tS_{\at},\Lambda^{0,1}\tS_{\at}\otimes E_{\at})_{+1}$). Note that this change of Sobolev spaces does not modify the kernels and cokernels of the operators we consider because $\epsilon \in ]0,1[$ (see \cite{lockhartm}). We also fix once and for all a scalar product on $P_0$.

The first map in the sequence \eqref{linseq} is obtained as follows. For each $\at\in\GP$ fix a $\Z/2\Z$-invariant cutoff function $\varphi_{\at}:\Sigma_{\at}\rightarrow [0,1]$ which is supported outside the cylinders $[3R_i/4,21R_i/4]\times S^1$ for each $i\in\{1,\ldots,m\}$ such that $\alpha_i\neq 0$, equal to $1$ outside the cylinders $[R_i/4,23R_i/4]\times S^1$ for the same values of $i$, and such that its derivatives on the cylinders $[0,6R_i]\times S^1$ are bounded by a constant times $1/R_i$. Those cutoff functions induce maps $p_{\at} : (\xi,e)\in\ker(\kDP) \mapsto (\varphi_{\at}\xi,e)\in L^{k,2,-\epsilon}(\tS_{\at},E_{\at};\uk)_{+1}\oplus P_{\at}$ which are injective provided $\at$ is small enough. The map $\lPKa$ is the orthogonal projection on the image of $p_{\at}$ composed with the inverse of $p_{\at}$. 

One can prove in the same way as in Theorem 2.4.1 of \cite{ww} that there exists a constant $C>0$ such that for all $\at\in\GP$ small enough and for all elements $(\xi,e)\in p_{\at}(\ker(\kDP))^{\perp}$,
\begin{equation}\label{ineq}
\lVert (\xi,e) \rVert_{k,2,-\epsilon} \leq C \lVert \kDPa (\xi,e)\rVert_{k-1,2,-\epsilon},
\end{equation}
where the norms are the exponential ones we defined at the beginning. We do not give the proof of this inequality as it is an easy adaptation of the one given in \cite{ww}. The fact that $\lPKa$ is injective for $\at$ small enough follows.

To construct the second map, we first fix $\Z/2\Z$-invariant cutoff functions $\psi_{\at}: \Sigma_{\at}\rightarrow [0,1]$ which are supported on the cylinders $[R_i,5R_i]\times S^1$ for each $i\in\{1,\ldots,m\}$ such that $\alpha_i\neq 0$, equal to $1$ on the cylinders $[2R_i,4R_i]\times S^1$ for the same values of $i$, and such that their derivatives on the cylinders $[0,6R_i]\times S^1$ are bounded by a constant times $1/R_i$. We then include the space $\R E_{\una}$ in  $L^{k-1,2,-\epsilon}(\tS_{\at},\Lambda^{0,1}\tS_{\at}\otimes E_{\at})_{+1}$ by sending an element $\ux = (x_j)_{j|\alpha_j\neq 0}\in \R E_{\una}$ to the form $q_{\at}(\ux) = \psi_{\at}\displaystyle\sum_{j | \alpha_j\neq 0} x_j (\dd r_j - i \dd \theta_j)$. Here, the coordinate $r_j$ is taken to increase from the $-$ side of the node $n_j$ to the $+$ side. Again, when $\at$ is small enough, the map $q_{\at}$ is injective. From the orthogonal decompositions
\begin{gather*}
L^{k,2,-\epsilon}(\tS_{\at},E_{\at};\uk)_{+1}\oplus P_{\at} = p_{\at}(\ker(\kDP))^{\perp}\oplus p_{\at}(\ker(\kDP)) \\
\intertext{ and }
L^{k-1,2,-\epsilon}(\tS_{\at},\Lambda^{0,1}\tS_{\at}\otimes E_{\at})_{+1} = q_{\at}(\R E_{\una})^{\perp}\oplus q_{\at}(\R E_{\una}),
\end{gather*}
we obtain four operators $D_{\at}^{ij}$, $i,j=1,2$, by restricting and projecting on each of the components the operator $\kDPa$. It follows then from the bounds on the derivatives of $\psi_{\at}$ and from the inequality \eqref{ineq} that $D_{\at}^{11} : p_{\at}(\ker(\kDP))^{\perp}\rightarrow q_{\at}(\R E_{\una})^{\perp}$ is an isomorphism with uniformly bounded inverse as soon as $\at$ is small enough, as in \cite{ww}. Moreover, in our case, the choices of cutoff functions imply that the operator $D_{\at}^{22} : p_{\at}(\ker(\kDP)) \rightarrow q_{\at}(\R E_{\una})$ vanishes. The map $\fPKa : \ker(\kDP)\rightarrow \R E_{\una}$ is given by $q_{\at}^{-1}\circ D_{\at}^{12}\circ (D_{\at}^{11})^{-1}\circ D_{\at}^{21} \circ p_{\at}$. It is straightforward to check that the kernel of $\fPKa$ is exactly the image of $\lPKa$. Moreover, as mentioned in \cite{ww}, multiplying $\at$ by a parameter $\varepsilon > 0$ and taking the limit when $\varepsilon$ goes to zero this map converges to the map $ev_{\una}$ given by the difference of the evaluations of a section on both sides of the nodes $n_i$ for which $\alpha_i\neq 0$. In particular, since by assumption this latter map is surjective, the map $\fPKa$ is also surjective when $\at$ is small enough. This shows that the sequence \eqref{linseq} is exact and that the operators $\kDPa$ are surjective.

To show that the isomorphism \eqref{liniso} does not depend on the choice of $P_0$, we take another finite dimensional vector subspace $P'_0$ of $L^{k-1,2}(\tS,\Lambda^{0,1}\tS \otimes E)_{+1}$ such that all the elements of $P'_0$ vanish on the gluing disks and suppose that $P_0$ is included in $P'_0$. We take $\at$ small enough and consider the following diagram
\begin{equation}\label{bigdiag}
\xymatrix{  & 0 \ar[d]              & 0 \ar[d]                 &                         &  \\
0 \ar[r] & \ker(\kDPa) \ar[d]_{i_{\at}}\ar[r]^{\lPKa} & \ker(\kDP) \ar[d]_{\ti_{\at}} \ar[r]^{\fPKa} & \R E_{\una} \ar[d]^{\id}\ar[r] & 0 \\
0 \ar[r] & \ker(\kDPpa) \ar[d]_{pr_{\at}}\ar[r]^{\lPpKa} & \ker(\kDPp) \ar[d]_{\tpr_{\at}} \ar[r]^{\fPpKa} & \R E_{\una} \ar[r]       & 0 \\
         & P_0'/ P_0 \ar[r]^{\id}\ar[d]   & P_0'/ P_0 \ar[d]    &                         &  \\
         & 0                        & 0                        &                         &
}
\end{equation}
The first two rows are the sequences \eqref{linseq} for $P_0$ and $P'_0$ respectively, giving rise to the isomorphisms \eqref{liniso}. The first column is given by the natural inclusion of $\ker(\kDPa)$ in $\ker(\kDPpa)$ and then the projection on the $P'_0$ factor. To make the diagram commutative, we cannot take the maps in the second column to be the inclusion and the projection as for the first column. We proceed as follows. First, fix a vector subspace $F$ of $\ker(\kDP)$ such that the evaluation map $ev_{\una}$ is an isomorphism. Then, for $\at$ small enough, we get two direct sum decompositions
\begin{gather*}
\ker(\kDP) = \lPKa(\ker(\kDPa)) \oplus F \\ \intertext{and}
\ker(\kDPp) = \lPpKa(\ker(\kDPpa)) \oplus F.
\end{gather*}
We define $\ti_{\at}$ to be $\lPpKa\circ i_{\at}\circ (\lPKa)^{-1}$ on $\lPKa(\ker(\kDPa))$ and $(\fPpKa)^{-1}\circ \id\circ \fPKa$ on $F$. The map $\tpr_{\at}$ is given by $pr_{\at}\circ (\lPpKa)^{-1}$ on $\lPpKa(\ker(\kDPpa))$ and the zero map on $F$. The obtained diagram is commutative. We now show that the second column of diagram \eqref{bigdiag} induces the same isomorphism $\ddet(\kDPp) = \ddet(\kDP)\otimes \Lambda^{\max}_{\R} P'_0/P_0$ up to homotopy as the sequence
\[
0\rightarrow \ker(\kDP) \xrightarrow{i_0} \ker(\kDPp) \xrightarrow{pr_0} P'_0/P_0 \rightarrow 0,
\]
which will show that the isomorphism \eqref{liniso} is the same if we use $P_0$ or $P'_0$. To that end, we define an orientation-preserving automorphism $g_{\at}$ of $\ker(\kDPp)$ such that the diagram
\[
\xymatrix{
0 \ar[r] & \ker(\kDP) \ar[r]^{i_0} \ar[d]_{\id} & \ker(\kDPp) \ar[r]^{pr_0} \ar[d]_{g_{\at}} & P'_0/P_0 \ar[r] \ar[d]_{\id} & 0 \\
0 \ar[r] & \ker(\kDP) \ar[r]^{\ti_{\at}}  & \ker(\kDPp) \ar[r]^{\tpr_{\at}}  & P'_0/P_0 \ar[r]  & 0 
}
\]
commutes. Denote by $G\subset \ker(\kDPp)$ the orthogonal complement of $\ker(\kDP)$. For $\at$ small enough, $\ti_{\at}$ is close to the inclusion $i_0$. Indeed, take an element $\lPKa(\xi,e) + (\zeta,e')$ of $\ker(\kDP) = \lPKa(\ker(\kDPa)) \oplus F$, and write
\begin{multline}\label{decoup}
\ti_{\at}(\lPKa(\xi,e) + (\zeta,e')) - (\lPKa(\xi,e) + (\zeta,e'))\\ = (\lPpKa(\xi,e) - \lPKa (\xi,e)) + ((\fPpKa)^{-1}\circ \fPKa((\zeta,e')) - (\zeta,e')).
\end{multline}
The second term on the right-hand side is small compared to the norm of $(\zeta,e')$ because of the convergence of $(\fPpKa)^{-1}\circ \fPKa$ to the identity. For the first term, first note that there exists a constant $C_1>0$ such that for all $\at$ small enough and all $w\in\ker(\kDPp)$,
  \begin{equation}
    \lVert w\rVert_{k,2,-\epsilon}\leq C_1\rVert p_{\at}(w)\lVert_{k,2,-\epsilon},
  \end{equation}
which follows from the fact that the $L^{k,2,-\epsilon}$ norm of an element of $\ker(\kDPp)$ outside the gluing region controls the norm over the whole curve $\Sigma$. Applying this to our case, we obtain
\begin{multline*}
\lVert \lPpKa(\xi,e) - \lPKa (\xi,e)\rVert_{k,2,-\epsilon} \leq C_1 \lVert p_{\at}\circ \lPpKa(\xi,e) - (\xi,e)\rVert_{k,2,-\epsilon}\\
 + C_1\lVert (\xi,e) - p_{\at}\circ \lPKa (\xi,e))\rVert_{k,2,-\epsilon}.
\end{multline*}
Then use the inequality \eqref{ineq} so we find a positive constant $C_2$ such that for $\at$ small enough,
\begin{gather*}
\lVert p_{\at}\circ \lPpKa(\xi,e) - (\xi,e)\rVert_{k,2,-\epsilon} \leq C_2 \lVert \kDPpa (p_{\at}\circ \lPpKa(\xi,e)) \rVert_{k-1,2,-\epsilon} \\ \intertext{and}
\lVert (\xi,e) - p_{\at}\circ \lPKa (\xi,e))\rVert_{k,2,-\epsilon}\leq C_2 \lVert \kDPa (p_{\at}\circ \lPKa (\xi,e)) \rVert_{k-1,2,-\epsilon}.
\end{gather*}
It then follows from the bounds on the cutoff functions $\varphi_{\at}$ that there is a positive constant $C_3$ such that
\begin{gather*}
 \lVert \kDPpa (p_{\at}\circ \lPpKa(\xi,e)) \rVert_{k-1,2,-\epsilon} \leq C_3 \sum_{i|\alpha_i\neq 0}\frac{1}{R_i}\lVert \lPpKa(\xi,e)\rVert_{k,2,-\epsilon} \\ \intertext{and}
 \lVert \kDPa (p_{\at}\circ \lPKa (\xi,e)) \rVert_{k-1,2,-\epsilon} \leq C_3 \sum_{i|\alpha_i\neq 0}\frac{1}{R_i}\lVert \lPKa(\xi,e)\rVert_{k,2,-\epsilon}.
\end{gather*}
 Thus, both terms on the right-hand side of \eqref{decoup} are small compared to the norm of $\lPKa(\xi,e) + (\zeta,e')$ and $\ti_{\at}$ is close to the inclusion map. In particular, it is transverse to $G$. The map $g_{\at}$ is then given by $\ti_{\at}$ on $\ker(\kDP)$ and by $\tpr_{\at}^{-1}\circ pr_{0}$ on $G$. This map is in fact close to the identity. Indeed, if $(\xi,e) + (\zeta,e')$ is an element of $\ker(\kDPp) = \ker(\kDP)\oplus G$, then
\[
g_{\at}((\xi,e) + (\zeta,e')) - ((\xi,e) + (\zeta,e')) = (\ti_{\at}(\xi,e) - (\xi,e)) + (\tpr_{\at}^{-1}\circ pr_{0}(\zeta,e') - (\zeta,e')).
\]
Then the norm of the first term is small compared to the norm of $(\xi,e)$ because $\ti_{\at}$ is close to the inclusion. The norm of the second term is small compared to the norm of $(\zeta,e')$ because $\tpr_{\at}$ converges to $pr_0$ when $\at$ goes to $0$.

This shows that $g_{\at}$ preserves the orientations of $\ker(\kDPp)$.

Thus the isomorphism \eqref{liniso} does not depend on the choice of $P_0$. The proof that the isomorphism \eqref{liniso} does not depend on the choice of $\uk$ is similar so we do not reproduce it here. The homotopy invariance follows readily from this independence (see Lemma 9.6 \cite{hutchtaubes}).
\end{proof}

\begin{remark}\label{remlin}
  Let $E'$ be another hermitian vector bundle over $\tS$ given as the pullback of a bundle on $\Sigma$, and equipped with a real structure $c_{E'}$. Suppose we have the same data as for $E$ and thus construct a family $E'_{\at}$ of hermitian vector bundles over $\tS_{\at}$ also equipped with real structures $c_{E'_{\at}}$. If $\Phi$ is an isomorphism between $(E',c_{E'})$ and $(E,c_E)$ such that it is the identity in the fixed trivializations on the gluing disks, then it naturally induces a family of isomorphisms $\Phi_{\at}$ between $(E'_{\at},c_{E'_{\at}})$ and $(E_{\at},c_{E_{\at}})$. Moreover, it follows directly from the definition of the sequence \eqref{linseq} that the following diagram commutes
\[
\xymatrix{
0 \ar[r] & \ker(\Phi_{\at}^*\kDPa)\ar[rr]^{l^{\Phi^{-1}(P_0),\Phi^{-1}(\uk)}_{\at}} \ar[d]_{\Phi_{\at}} & & \ker(\Phi^*\kDP) \ar[rrr]^{f^{\Phi^{-1}(P_0),\Phi^{-1}(\uk)}_{\at}}\ar[d]_{\Phi} & & & \R E'_{\una} \ar[r] \ar[d]_{\Phi} & 0\\
0 \ar[r] & \ker(\kDPa)\ar[rr]^{\lPKa} & & \ker(\kDP) \ar[rrr]^{\fPKa} & & & \R E_{\una} \ar[r] & 0\\
}
\]
\end{remark}

Take $\at\in \GP$ a small gluing parameter with no vanishing coordinate and let us now describe more precisely the case of the standard Cauchy-Riemann operators $\uDB_{\TC^2,\at}$ on the trivial complex vector bundles of rank $2$ over $\Sigma_{\at}$. Those operators are all surjective, their kernels consisting of the constant sections with value in $\R^2$. In particular, the determinants of these operators all come with a natural orientation induced by the orientation of $\R^2$. On the curve $\Sigma$, the standard Cauchy-Riemann operator $\DB_{\TC^2}$ on the rank 2 trivial complex vector bundle is also surjective and its kernel is naturally isomorphic to $\R^2$, so its determinant is oriented.

\begin{lemma}\label{trivgl}
The isomorphism \eqref{liniso} applied to the operators $\uDB_{\TC^2,\at}$ and $\DB_{\TC^2}$ preserves the natural orientations described above.
\end{lemma}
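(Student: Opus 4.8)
The plan is to evaluate the isomorphism \eqref{liniso} by hand in this very degenerate situation, exploiting the freedom in the auxiliary data. By the last sentence of Proposition \ref{lingl} the isomorphism \eqref{liniso} does not depend, up to homotopy, on $P_0$ or on $\uk$; since an isomorphism of oriented real lines is orientation preserving exactly when it lies in the identity component of the space of isomorphisms, we may compute it with any admissible choice. I would take $P_0=\{0\}$ and $K_i=(\TC^2)_{p_i}$ for every $i$. Then $\R E_{\up}/\uk$, $\det(P_0)$ and $\det(P_{\at})$ are trivial, the isomorphisms \eqref{detPz} and \eqref{detPa} reduce to identities, $\kDPa$ is literally the operator $\uDB_{\TC^2,\at}$ on $\Sigma_{\at}$, and $\kDP$ is the standard operator $\zD$ on the trivial bundle over the full normalization $\tS$ of $\Sigma$; moreover the surjectivity hypothesis of Proposition \ref{lingl} holds, since $\Sigma$ is a nodal rational curve, so $\DB_{\TC^2}$ is onto, and is connected, so $ev_{\un}$ is onto. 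Under these identifications \eqref{liniso} is the composite of the orientation isomorphism $\ddet(\zD)=\ddet(\uDB_{\TC^2,\at})\otimes\Lambda^{\max}_{\R}\R\TC^2_{\un}$ coming from the short exact sequence \eqref{linseq} (here all $\alpha_i\neq 0$, so $\una=\un$) with the orientation isomorphism $\ddet(\zD)=\ddet(\DB_{\TC^2})\otimes\Lambda^{\max}_{\R}\R\TC^2_{\un}$ of \eqref{isonormal} relating the determinant over $\tS$ with that over $\Sigma$; the two copies of $\Lambda^{\max}_{\R}\R\TC^2_{\un}$ are meant to cancel.

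The key observation is that the sequence \eqref{linseq} and the short exact sequence \eqref{normal} underlying \eqref{isonormal} share their middle term $\ker(\zD)$, the real componentwise-constant sections of $\TC^2$ over $\tS$, and their right-hand term $\R\TC^2_{\un}$; their left-hand terms are $\ker(\uDB_{\TC^2,\at})$ and $\ker(\DB_{\TC^2})$, each consisting of the single constant section with value in $\R^2$ and so canonically identified with $\R^2$ -- and these identifications are exactly the ones defining the orientations of $\ddet(\uDB_{\TC^2,\at})$ and $\ddet(\DB_{\TC^2})$. Read through these identifications, the sequence \eqref{normal} is literally $0\to\R^2\hookrightarrow\ker(\zD)\xrightarrow{ev_{\un}}\R\TC^2_{\un}\to 0$, the first map being the diagonal inclusion of the globally constant sections and the second the difference of the two branch-evaluations at each node. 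On the other hand, unwinding the construction of the maps in the proof of Proposition \ref{lingl}, for $\at$ small the map $\fPKa$ is arbitrarily close to $ev_{\una}=ev_{\un}$, while $\lPKa$, being the orthogonal projection onto the image of the cutoff map $p_{\at}$ followed by $p_{\at}^{-1}$, is arbitrarily close to that same diagonal inclusion; hence for $\at$ small the sequence \eqref{linseq}, read through the identification of its left term with $\R^2$, is an arbitrarily small perturbation of \eqref{normal} inside the space of short exact sequences with those fixed terms.

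To conclude I would invoke two elementary facts: the space of short exact sequences of finite-dimensional real vector spaces with prescribed terms is a smooth manifold, hence locally path connected; and the orientation isomorphism attached to such a sequence varies continuously with it, so that being orientation preserving is locally constant along a path of sequences. A short path joining \eqref{linseq} to \eqref{normal} then shows that the two induced isomorphisms $\ddet(\zD)\cong\Lambda^2_{\R}\R^2\otimes\Lambda^{\max}_{\R}\R\TC^2_{\un}$ coincide; cancelling the common factor $\Lambda^{\max}_{\R}\R\TC^2_{\un}$, the isomorphism \eqref{liniso} becomes the identity of $\Lambda^2_{\R}\R^2$ and so preserves the natural orientations, which is the assertion of the lemma. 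The step I expect to be the main obstacle is the middle paragraph: one must extract cleanly from the construction in the proof of Proposition \ref{lingl} that $\lPKa$ and $\fPKa$ converge to the untwisted diagonal inclusion and to the nodal difference respectively -- in particular that no stray sign enters the left-hand map -- and check that the sign conventions at the real nodes (the conjugation on the fibre over a real isolated node being $-c_{\TC}$), which enter $\fPKa$ and $ev_{\un}$ identically, do not affect the comparison; everything after that is soft.
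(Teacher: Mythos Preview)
Your proposal is correct and follows essentially the same approach as the paper: both arguments reduce to showing that, under the identifications of the kernels with $\R^2$ via constant sections, the sequence \eqref{linseq} is a small perturbation of the sequence \eqref{normal}, because $\fPKa$ converges to $ev_{\un}$ and $\lPKa$ converges to the diagonal inclusion. The paper carries out the step you flag as the main obstacle by proving the explicit estimate $\lVert \la(c)-c\rVert\le K\sum_i R_i^{-1}\lVert c\rVert$ from inequality \eqref{ineq}, and then, instead of your abstract continuity argument on the space of short exact sequences, fixes a complement $F$ on which $ev_{\un}$ is an isomorphism and checks directly that the image basis can be homotoped without crossing $F$; the two conclusions are equivalent.
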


\begin{proof}
 On the curve $\tS$, the standard Cauchy-Riemann operator $\uDB_{\TC^2}$ on the rank 2 trivial complex vector bundle is still surjective. Its kernel is oriented by the sequence \eqref{normal} on the one hand,
\[
0\rightarrow \ker(\DB_{\TC^2}) \rightarrow \ker(\uDB_{\TC^2})\xrightarrow{ev_{\un}} \R \TC^2_{\un}\rightarrow 0,
\]
 and by the sequence \eqref{linseq} on the other hand,
\[
0\rightarrow \ker(\uDB_{\TC^2,\at}) \xrightarrow{l_{\at}} \ker(\uDB_{\TC^2})\xrightarrow{f_{\at}} \R \TC^2_{\un}\rightarrow 0,
\]
 where the real vector space $\R \TC^2_{\un}$ is oriented using the numbering of the nodes.

  Choose a vector space $F\subset \ker(\uDB_{\TC^2})$ such that the restriction of $ev_{\un}$ on $F$ is an isomorphism. Then, since the map $\fa$ of Proposition \ref{lingl} converges to the map $ev_{\un}$, for $\at$ small enough, the restriction of $\fa$ to $F$ is an isomorphism and given a basis of $F$ its images by $ev_{\un}$ and by $\fa$ give the same orientation of $\R \TC_{\un}$. Thus, to prove the statement, we only need to check that the two orientations we have on $\ker(\uDB_{\TC^2})$ coincide.

Fix a norm on $\ker(\uDB_{\TC^2})$. Using inequality \eqref{ineq} one sees that there exists a positive constant $K>0$ such that for $\at$ small enough and for all $c\in \R^2 = \ker(\uDB_{\TC^2,\at})$
\[
\lVert \la(c) - c\rVert \leq K \sum_{i=1}^m\frac{1}{R_i}\lVert c \rVert,
\]
where we see $c$ in $\ker(\uDB_{\TC^2})$ as the constant section with value $c$. Thus, for $\at$ small enough, one can homotope the image of a positive basis of $\ker(\uDB_{\TC^2,\at})$ by $\la$ to a positive basis of $\ker(\DB_{\TC^2})$ without crossing $F$, which concludes the proof.
\end{proof}

\subsubsection{Gluing orientations}\label{orsection}

We now tackle the orientation statement appearing in Theorem \ref{gluing}. To that end, let us fix an orientation on $\cyl$ and $\P$, and let us also fix an oriented real $Spin$ structure on $(TX, d c_X)$. Then, Proposition \ref{porient} shows that the open set $U_{\P}\cap \rmdp$ is oriented. 

We will first define an orientation on the open set $\mathcal{U}\subset \rmook\times \cyl \times \ker(\vD_{u_0})$ given by Theorem \ref{homeo}, then check that the map $\gl : \mathcal{U}\rightarrow U_{\P}$ preserves the orientations.

\textbf{Orientation of $\rmook\times \cyl \times \ker(\vD_{u_0})$ :} We have already fixed an orientation on $\cyl$. On $\rmook$, we pick the orientation defined by Ceyhan, \S 5.4.1 in \cite{ceyhan}. Here, we use the assumption we made that none of the points $p_i$ is real to be sure that $\rmook$ is orientable (Theorem 5.7 in \cite{ceyhan}).

To orient $\ker(\vD_{u_0})$, which is the same as orienting $\ddet(\vD_{u_0})$, we first use the diagram
\[
\xymatrix{
0 \ar[r] & L^{k,p}(\Sigma,u_0^*TX;\uh)_{+1} \oplus \P \ar[d]^{\vD_{u_0}} \ar[r] & L^{k,p}(\Sigma,u_0^*TX)_{+1}\oplus \P \ar[d]^{\pD_{u_0}} \ar[r]^{ev_{\up}} & \R \left(T_{u_0(\up)}X / T_{u_0(\up)}\uh \right) \ar[d]^0 \ar[r] & 0 \\
0 \ar[r] & L^{k-1,p}(\tS,\Lambda^{0,1}\tS\otimes u_0^*TX)_{+1} \ar[r] & L^{k-1,p}(\tS,\Lambda^{0,1}\tS\otimes u_0^*TX)_{+1} \ar[r] & 0 \ar[r] & 0,
}
\]
which gives the isomorphism $\ddet(\pD_{u_0}) = \ddet(\vD_{u_0})\otimes \Lambda_{\R}^{\max}\R \left( T_{u_0(\up)} X /  T_{u_0(\up)} \uh\right)$. Since all the points $p_i$ are complex, their numbering orients the space $\R \left( T_{u_0(\up)} X /  T_{u_0(\up)} \uh\right) = \R T_{\up}\Sigma$ (where the last isomorphism is given by $d u_0$). To orient $\ddet(\pD_{u_0})$, we will construct below an isomorphism $\Phi$ between the trivial complex vector bundle of rank $2$ over $\Sigma$ and $u_0^*T X$. We then use the orientation of $\P$ and of the determinant of the standard operator $\DB_{\TC^2}$ on the trivial bundle we fixed at the end of \S \ref{parlin} in order to orient $\ddet(\Phi^*\pD_{u_0})$. Using $\Phi$, we get the desired orientation on $\ddet(\pD_{u_0})$ and thus on $\ker(\vD_{u_0})$.

Let us now explain how we choose $\Phi$. If $\R \Sigma\neq \emptyset$, take a $Spin$ structure $\zeta_{u_0(\R\Sigma)}$ on $\R X$ given by Lemma \ref{pintriv}. Choose trivializations of $(TX,d c_X)$ in a $\Z/2\Z$-invariant neighborhood of $u_0(n_i)$, $i=1,\ldots,m$, such that for each real node $n_i$, the orientation on $T_{u(n_i)}\R X$ induced by those trivializations coincide with the one given by the oriented real $Spin$ structure. Using those trivializations, we get an isomorphism between the trivial complex vector bundle of rank $2$ over $\Sigma$ and $u_0^*T X$ on a neighborhood of the nodes. Extend it over the whole curve $\Sigma = \bigcup_{i=1}^m \Sigma_i$ as follows : 
\begin{itemize}
\item if the component $\Sigma_i$ is stable by $c_{\Sigma}$ and has non-empty real part, then we take $\Phi$ on $\Sigma_i$ to pullback the $Spin$ structure $\zeta_{u_0(\R\Sigma)}$ on $u_0^* T\R X$ to the natural one on the trivial bundle $\TR^2$ over the real part of $\Sigma_i$,
\item if the component $\Sigma_i$ is stable by $c_{\Sigma}$ but has empty real part, then we take $\Phi$ on $\Sigma_i$ to pullback the oriented real $Spin$ structure on $(u_0^*TX,d c_X)$ to the natural one on $(\TC^2,c_{\TC})$ over $\Sigma_i$,
\item on all the other components, we extend $\Phi$ arbitrarily.
\end{itemize}

\begin{remark}\label{remspin}
  As we mentionned in Remark \ref{remlin}, the isomorphism $\Phi$ induces a family of isomorphisms $\Phi_{\at} : (\TC^2,c_{\TC})\rightarrow (u_{\at}^*TX,d c_X)$, $\at\in\GP$. The choices we made for $\Phi$ together with Lemma \ref{pintriv} imply that when $\Sigma_{\at}$ is irreducible, then the pullback by $\Phi_{\at}$ of the real oriented $Spin$ structure given on $(u_{\at}^*TX,d c_X)$ coincides with the natural one on $(\TC^2,c_{\TC})$, and in the case where $\R\Sigma_{\at}$ is non-empty, the pullback of $\zeta_{u_{0}(\R\Sigma)}$ coincides with the natural $Spin$ structure on $\TR^2$ over $\R \Sigma_{\at}$.
\end{remark}

\textbf{The map $\gl$ preserves the orientations :}

We described above the orientation we put on $\mathcal{U}$. Let us briefly recall in the current setting how we oriented $U_{\P}\cap \rmdp$ in Proposition \ref{porient}. Consider the forgetful map $\pi_{\rmook,\cyl} : U_{\P}\cap \rmdp\rightarrow \rmk\times \cyl$. It is a submersion because the operators $\vD_u$ are surjective for all the curves in a neighborhood of $u_0$ (see Proposition \ref{surj}). Thus, to orient $U_{\P}\cap \rmdp$, we orient the product $\rmk\times \cyl$ and the fiber of $\pi_{\rmook,\cyl}$.

 The product is oriented using the orientation of $\cyl$ on one hand. On the other hand, recall that each component of $\rmk$ that intersects the image of $\pi_{\rmook,\cyl}$ can be seen as follows. Using the notations we introduced in \S \ref{parthick}, it is the quotient of the real part $\R_{c_S}\TT_n$ of the Teichmüller space $\TT_n$ by the subgroup $\R_{c_S}\Gamma^+_n$ of $\Gamma_n^+$ of elements commuting with $c_S$, for some real structure $c_S$ on $S$ such that all the marked points $\uz$ are complex conjugated. Now, the determinant bundle of $\R_{c_S}\TT_n$ is canonically isomorphic to the line bundle $\ddet(\DB_{\tau,\uz})_{\tau\in\R_{c_S}\TT_n}$ where $\DB_{\tau,\uz}$ is the restriction to $L^{k,p}(S,TS;\uz)_{+1}$ of the operator $\DB_{\tau} : L^{k,p}(S,TS)_{+1}\rightarrow L^{k-1,p}(S,\Lambda^{0,1}S\otimes TS)_{+1}$ given by the holomorphic structure on $TS$ equipped with $J_S(\tau)$ (see for example Lemma 1.8 in \cite{article}). We then orient this latter line bundle by using the isomorphism $\ddet(\DB_{\tau}) = \ddet(\DB_{\tau,\uz})\otimes \Lambda^{\max}_{\R}\R T_{\uz}S$. Since none of the marked points is real, the line $\Lambda^{\max}_{\R}\R T_{\uz}S$ is naturally oriented using the numbering of $\uz$, and we fixed an orientation of $\ddet(\DB_{\tau})$ by using an isomorphism of $(S,J_S(\tau),c_S)$ with $(\cpo,i,c_{\emptyset})$ or $(\cpo,i,c_{\rpo})$ as in Proposition \ref{dbj}. The orientation of $\R_{c_S}\TT_n$ we thus obtain descends to an orientation of the component of $\rmk$ we consider since the action of $\R_{c_S}\Gamma^+_n$ preserves it.

 The fiber of $\pi_{\rmook,\cyl}$ is oriented by orienting $\ker(\vD_u)$. This is done using the orientation of $\P$, the natural orientation on $\R \left( T_{u(\uz)} X /  T_{u(\uz)} \uh\right)$ coming from the one on $\R T_{\uz}S$, and an isomorphism between $(u^*TX,dc_X)$ and $(\TC^2,c_{\TC})$ preserving the appropriate structures.

This orientation does not exactly coincide with the one we chose in Proposition \ref{porient} because of the order in which we orient each term, but one can check that it differs from it only when $\P$ is odd-dimensional, and then it differs for all the curves. Thus, we will forget about this difference, and work with the orientation we just defined.

Now, to check that $\gl$ preserves the orientations, we use the triangle \ref{triangle} of Theorem \ref{homeo}. Indeed, it implies that is is enough to check two things : 
\begin{enumerate}
\item that the orientation we put on $\rmk$ coincides with the one given on $\rmook$ by Ceyhan in \S 5.4.1 in \cite{ceyhan} and that we used to orient $\mathcal{U}$,
\item that for all $\at\in\GP$ small enough and with no vanishing coordinate, the differential at $(0,0)\in\ker(\vD_{u_0})$ of the restriction of $\gl$ to $\{(\at,0,t_0)\}\times\ker(\vD_{u_0})$ is an orientation preserving isomorphism onto $\ker(\vD_{\gl(\at,0,t_0,(0,0))})$.
\end{enumerate}

Those two points imply that $\gl$ is orientation preserving at all points $(\at,0,t_0,(0,0))\in\rmk\times\cyl\times\ker(\vD_{u_0})$ and thus that $\gl$ is everywhere orientation preserving.

We treat the first point in Lemma \ref{orbase} and the second in Proposition \ref{orfiber}.

\begin{lemma}\label{orbase}
  The orientation of the components of $\rmk$ intersecting the image of $\pi_{\rmook,\cyl}$ coming from the orientation of the corresponding real Teichmüller spaces coincides with the orientation defined by Ceyhan in \S 5.4.1 in \cite{ceyhan}.
\end{lemma}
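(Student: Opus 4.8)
The plan is to reduce the statement to a pointwise comparison on one curve in each connected component of $\rmk$ meeting $\im(\pi_{\rmook,\cyl})$, and then to propagate by connectedness: both orientations are locally constant on the smooth manifold $\rmk$, so it is enough to see that they agree at a single point of each component. Fix such a component $C$; as recalled above, $C$ is the quotient of $\R_{c_S}\TT_n$ by $\R_{c_S}\Gamma^+_n$ for a fixed orientation-reversing involution $c_S$ of $S$ under which all the marked points are complex conjugated, so either $\R S=\emptyset$ or $\R S$ is a circle. I would take the base point $[\Sigma_0]\in C$ represented by $(\cpo,i,c,\uz)$ with $c=c_{\emptyset}$ (resp. $c=c_{\rpo}$) when $\R S=\emptyset$ (resp. $\R S\neq\emptyset$) and $\uz=(z_1,\dots,z_s,c(z_1),\dots,c(z_s))$ for a generic tuple $z_1,\dots,z_s$.

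First I would make orientation $(1)$ completely explicit at $[\Sigma_0]$. Since $\ker(\DB_{\tau,\uz})=0$ and $\coker(\DB_{\tau})=0$ on a genus-zero curve, the snake lemma applied to the commutative diagram \eqref{dtau} gives the short exact sequence
\[
0\rightarrow \ker(\DB_{\tau})\xrightarrow{ev_{\uz}} \R T_{\uz}S\rightarrow \coker(\DB_{\tau,\uz})\rightarrow 0,
\]
and $\coker(\DB_{\tau,\uz})\cong T_{\tau}\R_{c_S}\TT_n$ via $\dtau\mapsto -J_S(\tau)d_{\tau}J_S(\dtau)$. So orientation $(1)$ is the one making this sequence compatible, once $\ker(\DB_{\tau})=\ker(\DB_i)$ is given the positive basis $v'_1\wedge v'_2\wedge v'_3$ (resp. $v_1\wedge v_2\wedge v_3$) fixed in Proposition \ref{dbj} and $\R T_{\uz}S$ the orientation coming from the numbering of $\uz$. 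Choosing the $z_i$ so that $ev_{\uz}(\ker(\DB_i))$ is complementary to the span of $2s-3$ of the coordinate directions of $\R T_{\uz}S$, this reads off an explicit ordered basis of $T_{\tau}\R_{c_S}\TT_n$ representing orientation $(1)$.

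Then I would unwind Ceyhan's orientation from \S 5.4.1 of \cite{ceyhan}, which is built inductively along the forgetful fibrations $\rmook\rightarrow \R\overline{\M}_{2(s-1),0}$ obtained by dropping a conjugate pair of marked points: the fibres are open subsets of a complex curve, hence canonically oriented, and the induction starts at $\R\overline{\M}_{4,0}$, oriented component by component by hand. I would check that orientation $(1)$ is compatible with the same fibration: forgetting the pair $(z_s,c(z_s))$, the operator $\DB_{\tau,\uz}$ is the restriction of $\DB_{\tau,\uz'}$ to sections vanishing in addition at $z_s$ and $c(z_s)$, and the associated cokernel comparison (again via the snake lemma) contributes precisely $\R(T_{z_s}S\oplus T_{c(z_s)}S)_{+1}\cong T_{z_s}S$, oriented by its complex structure — that is, Ceyhan's fibre direction — while the quotient is $T_{\tau'}\R_{c_{S'}}\TT_{n-2}$ carrying orientation $(1)$ again; here one uses that the block of $\R T_{\uz}S$ attached to the forgotten pair is last in the numbering and that its numbering orientation is the complex one. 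By induction this reduces the whole comparison to $\R\overline{\M}_{4,0}$, where the explicit generator of the $1$-dimensional tangent space produced above is matched directly against Ceyhan's choice; continuity then gives the equality on $C$, hence on all of $\rmk$.

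The main obstacle will be the sign bookkeeping: one must carry along the fixed orientation of $\ddet(\DB_i)$, the numbering conventions for $\uz$ (especially for the real blocks attached to conjugate pairs), the identification of $\coker(\DB_{\tau,\uz})$ with $T_{\tau}\R_{c_S}\TT_n$, and Ceyhan's ordering of the moduli parameters, and then confirm that the inductive fibration argument reproduces Ceyhan's orientation up to a sign that is the same on every component — so that it can be absorbed into the choice of orientation of $\ddet(\DB_i)$, which was left open in Proposition \ref{dbj} precisely for this purpose (see Remark \ref{baserem}).
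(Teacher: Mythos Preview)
Your approach is genuinely different from the paper's. The paper does not use the forgetful tower $\rmook\to\R\overline{\M}_{2(s-1),0}$ at all: instead it works entirely in Ceyhan's explicit coordinates. Concretely, it normalizes each curve in the chosen component as $(\cpo,i,c_{\rpo},\ux)$ with $\ux=(x_1,\dots,x_{k-2},\varepsilon\lambda i,\,i,\,\overline{x_1},\dots,\overline{x_{k-2}},-\varepsilon\lambda i,\,-i)$, writes down Ceyhan's volume form $\omega_{\OO}$ in these coordinates, and then checks directly that the concatenation of the fixed basis $(v_1,v_2,v_3)$ of $\ker(\DB_i)$ with a positive basis of $T_{\ux}\OO$ (pulled back through the unique isomorphism $\psi$) is positive for the numbering orientation of $\R T_{\uz}S$. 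The vectors $v_j$ were engineered precisely so that $v_1$ vanishes at $\pm i$ and $(v_2(i),v_3(i))$ is a positive complex basis of $T_i\cpo$, which is what makes this one-line verification go through.

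Your inductive route is in principle sound, but it trades a single explicit computation for a longer chain of bookkeeping. Two points to be careful about. First, the paper's description of Ceyhan's orientation is via the explicit coordinate volume form, not via the forgetful fibrations; so you would first have to prove that Ceyhan's orientation is compatible with those fibrations in the way you claim, which is an extra step (easy, but not free). Second, your final remark is slightly off: the orientation of $\ddet(\DB_i)$ is \emph{not} left open in Proposition~\ref{dbj} --- the bases $(v_1,v_2,v_3)$ and $(v'_1,v'_2,v'_3)$ are fixed there, and Remark~\ref{baserem} only says that this particular choice was made so that the present lemma holds on the nose, not merely up to a global sign. In your scheme you would have to verify the $\R\overline{\M}_{4,0}$ base case separately in the empty and non-empty real-part cases and see that the \emph{given} bases work, rather than absorbing a residual sign after the fact; otherwise you risk inconsistency with the sign computation in Proposition~\ref{signk3}, which also uses the orientation fixed in Proposition~\ref{dbj}.
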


\begin{proof}
  Let $\OO\subset\rmk$ denote a component of $\rmk$ intersecting the image of $\pi_{\rmook,\cyl}$. There are two cases depending on whether the curves in $\OO$ have non-empty real part or no. Since the proof is identical in both cases, we will assume that the curves in $\OO$ have non-empty real part and omit the other case. 

Every element in $\OO$ has a unique representative $(\cpo,i,c_{\rpo},\ux)$ where
\[
\ux = (x_1,\ldots,x_{k-2},\varepsilon \lambda i, i, \overline{x_1},\ldots, \overline{x_{k-2}}, -\varepsilon \lambda i, -i),
\]
with $x_1,\ldots,x_{k-2}\in\cpo$, $\lambda\in ]0,1[$ and $\varepsilon = \pm 1$ depending on the component $\OO$. This induces coordinates on $\OO$, and in those, the volume form 
\[
\omega_{\OO} =\displaystyle \left(\frac{i}{2}\right)^{k-2}\bigwedge_{j=1}^{k-2} (d x_i\wedge d \overline{x_i}) \wedge d \lambda
\]
 gives the orientation defined by Ceyhan on the component $\OO$ (see \S\S 4.2.2 and 5.4.1 in \cite{ceyhan}).

Take an orientation reversing involution $c_S$ on $S$ and $\tau\in \R_{c_S}\TT_n$ such that the curve $(S,J_S(\tau),c_S,\uz)$ in an element of $\OO$. Take the unique isomorphism $\psi : (S,J_S(\tau),c_S,\uz)\rightarrow (\cpo,i,c_{\rpo},\ux)$, where $\ux$ is defined as above. Suppose $\uksi\in \R T_{\ux}\cpo$ is a tangent vector to $\OO$ written in Ceyhan's coordinates. Take a path $(\cpo,i,c_{\rpo},\ux_t)_{t\in\R}$ in $\OO$ such that $\ux_0 = \ux$, and $\frac{d\ux_t}{dt}_{|t=0} = \uksi$. Let $\uz_t = \psi^{-1}(\ux_t)$ and choose an isotopy $\phi_t\in Diff^+(S,c_S)$, $t\in\R$, such that for all $t\in\R$, $\uz_t = \phi_t(\uz)$. The path $(S,\phi_t^*J_S(\tau),c_S,\uz)_{t\in\R}$ coincides with $(\cpo,i,c_{\rpo},\ux_t)_{t\in\R}$. The tangent vector to this path at $t=0$ in $\coker(\DB_{\tau,\uz})$ is given by $-J_S(\tau)\frac{d}{dt}(\phi_t^*J_S(\tau))_{|t=0} = \DB_{\tau}(\frac{d}{dt}(\phi_t)_{|t=0})$ (see the proof of Lemma 1.8 in \cite{article}).

Now, recall that to orient $\coker(\DB_{\tau,\uz})$, we used the sequence
\[
0\rightarrow \ker(\DB_{\tau})\xrightarrow{ev_{\uz}} \R T_{\uz}S \xrightarrow{\delta} \coker(\DB_{\tau,\uz})\rightarrow 0.
\]
Note that if $\uzeta\in\R T_{\uz}S$, then $\delta(\zeta) = \DB_{\tau}v$, where $v\in L^{k,p}(S,TS)_{+1}$ satisfies $ev_{\uz}(v) = \uzeta$. In particular, $\delta(d\psi^{-1}(\uksi)) = \DB_{\tau}(\frac{d}{dt}(\phi_t)_{|t=0})$. Thus, we only have to check that if we take a basis of $\R T_{\uz}S$ given by a positive basis of $\ker(\DB_{\tau})$ followed by the image by $d\psi^{-1}$ of a positive basis of the tangent space of $\OO$ at $\ux$ in Ceyhan's coordinates, this basis is positive ($\R T_{\uz}S$ being oriented by the numbering of $\uz$).

For each point $x_j$, $j=1,\ldots k-2$, choose a non-zero vector $e_j\in T_{x_j}\cpo$. Then the family 
\begin{multline*}
e_1+d_{x_1} c_{\rpo}(e_1),ie_1-id_{x_1} c_{\rpo}(e_1),\ldots,e_{k-2}+d_{x_{k-2}} c_{\rpo}(e_{k-2}),\\
ie_{k-2} -id_{x_{k-2}} c_{\rpo}(e_{k-2}),(0,\ldots,0,i,0,0,\ldots,0,-i,0)
\end{multline*}
 of vectors of $T_{\ux}\cpo$ is a positive basis of the tangent space of $\OO$ at $(\cpo,i,c_{\rpo},\ux)$ in Ceyhan's coordinates. On the other hand, recall that we defined a basis $(v_1,v_2,v_3)$ of $\ker(\DB_i)$ with $v_1(z) = (z-i)(z+i)$, $v_2(z) = (1-z)(1+z)$ and $v_3(z) = z$. Thus, a positive basis of $\ker(\DB_{\tau})$ is given by $d\psi^{-1}(v_1),d\psi^{-1}(v_2),d\psi^{-1}(v_3)$. Reorganizing the vectors in the concatenated family, we indeed obtain a positive basis of $\R T_{\uz}S$, which concludes the proof.
\end{proof}

\begin{remark}\label{baserem}
  The choice of the sections $v_j$ which seemed arbitrary in Proposition \ref{dbj} is justified in the proof of Lemma \ref{orbase}. In fact, we use the following facts about the $v_j$:
  \begin{itemize}
  \item $v_1$ vanishes at $i$ and $-i$ and is real and positive between those two points,
\item $(v_2(i),v_3(i))$ is a positive basis of $T_i\cpo$.
  \end{itemize}
Any other choice of sections satisfying the above two conditions gives the same orientation of $\ker(\DB_i)$. The choice for the sections $v'_j$ was also done so that Lemma \ref{orbase} would stand.
\end{remark}

We now go on to the second point. For $\at\in\GP$, we denote by $\gl_{\at} : \ker(\vD_{u_0})\rightarrow U_{\P}$ the restriction of $\gl$ to the fiber $\{(\at,0,t_0)\}\times \ker(\vD_{u_0})$.

\begin{prop}\label{orfiber}
 Let $\at\in \GP$ be a gluing parameter with no vanishing coordinate. If $\at$ is small enough, the differential $d_{(0,0)} \gl_{\at}$ is an orientation preserving isomorphism between $\ker(\vD_{u_0})$ and $\ker(\vD_{\gl_{\at}(0,0)})$.
\end{prop}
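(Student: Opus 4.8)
The plan is to show that, for $\at\in\GP$ small enough with no vanishing coordinate, the differential $d_{(0,0)}\gl_{\at}$ is $C^0$-close to the \emph{linear} pregluing map $p_{\at}$ from the proof of Proposition~\ref{lingl}, and then to deduce the orientation statement by transporting the linear gluing isomorphism \eqref{liniso} to the trivial bundle via the isomorphisms $\Phi$ and $\Phi_{\at}$ of \S\ref{orsection}, where it becomes the content of Lemma~\ref{trivgl}.

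First I would differentiate the explicit formula for $\gl$ recalled at the end of \S\ref{pargl}. The pregluing $\xi_{\at,t_0}$ of a section is linear in the section and vanishes for $\xi=0$, $\gl$ restricted to a fibre of $\pi_{\mathcal{U}}$ is $C^1$ (Remark~\ref{difffiber}), and $\phi_{\at,0,t_0}(0,0)$ and $\ex_{u_{\at}}$ near the pregluing $u_{\at}$ are close to $0$ and to parallel transport respectively; hence, under the identification of $T_{\gl_{\at}(0,0)}W_0$ with $\ker(\vD_{\gl_{\at}(0,0)})$,
\[
d_{(0,0)}\gl_{\at}(\xi,e) = \bigl(\xi_{\at,t_0},\, e\bigr) + r_{\at}(\xi,e),
\]
where the linear map $r_{\at}$ collects the differential of the correction $pr_i\circ\phi_{\at,0,t_0}$ and the nonlinearity of $\ex_{u_{\at}}$. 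The standard gluing estimates bound $r_{\at}$: differentiating the defining identity $\F_{\at,0,t_0}((\xi_{\at,t_0},e)+\phi_{\at,0,t_0}(\xi,e))=0$ at $(0,0)$, using the uniformly bounded right inverse of $D_{\at,0,t_0}$ from Proposition~B.7.9 of \cite{pardon} together with the bound $\lVert D_{\at,0,t_0}(\xi_{\at,t_0},e)\rVert = O\bigl(\sum_i 1/R_i\bigr)\lVert(\xi,e)\rVert$ coming from the derivatives of the cut-off functions used in the pregluing, one gets $\lVert r_{\at}\rVert\to 0$ as $\at\to 0$.

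Next I would compare $d_{(0,0)}\gl_{\at}$ with the map $p_{\at}\colon(\xi,e)\mapsto(\varphi_{\at}\xi,e)$ from the proof of Proposition~\ref{lingl}, taken for the data $E=u_0^*TX$ with the complex structure $\delta(t_0)$, $\zD=\Dn_{u_0}$, $\uk=(T_{u_0(p_i)}H_i)_{i}$ and $P_0=\P$, so that $\kDP_{\un}$ is $\vD_{u_0}$ on $\Sigma$ and $\kDPa$ is the glued operator on $\Sigma_{\at}$ representing $\vD_{\gl_{\at}(0,0)}$ (both determinant lines being canonically identified). For $(\xi,e)\in\ker(\vD_{u_0})$ the pregluing $\xi_{\at,t_0}$ differs from $\varphi_{\at}\xi$ only over the necks, by a term which is $o(1)$ as $\at\to 0$; combined with the previous step, $d_{(0,0)}\gl_{\at}(\xi,e)-p_{\at}(\xi,e)\to 0$. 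Since $\lPKa$ is the orthogonal projection onto $\im(p_{\at})$ followed by $p_{\at}^{-1}$, it follows that $\lPKa\circ d_{(0,0)}\gl_{\at}$ converges as $\at\to0$ to the inclusion $\ker(\vD_{u_0})\hookrightarrow\ker(\kDP)$ appearing in the normalisation sequence \eqref{normal} (whose connecting and cokernel maps vanish here since $\vD_{u_0}$ is surjective by the $\Cl$-regularity of $\P$). In particular $d_{(0,0)}\gl_{\at}$ is injective for $\at$ small, hence an isomorphism since by Lemma~\ref{fredholm} source and target have the same dimension, and since $\lPKa\circ d_{(0,0)}\gl_{\at}$ lies close to that inclusion the orientation-preservation of $d_{(0,0)}\gl_{\at}$ becomes equivalent to the statement that the isomorphism $\ddet(\vD_{u_0})\cong\ddet(\vD_{\gl_{\at}(0,0)})$ furnished by \eqref{liniso} (in the case, relevant here, where every node is smoothed) respects the orientations fixed in \S\ref{orsection}, the common factor $\Lambda^{\max}_{\R}\R E_{\un}=\Lambda^{\max}_{\R}\R E_{\una}$ cancelling on the two sides.

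Finally I would reduce this last comparison to Lemma~\ref{trivgl}. Those orientations were fixed by pulling back, via $\Phi\colon\TC^2\to u_0^*TX$ over $\Sigma$ and $\Phi_{\at}\colon\TC^2\to u_{\at}^*TX$ over $\Sigma_{\at}$ (which agree with the fixed trivialisations on the gluing disks), the fixed orientations of $\ddet(\DB_{\TC^2})$ over $\Sigma$, of $\ddet(\uDB_{\TC^2,\at})$ over $\Sigma_{\at}$, of $\P$, and of $\Lambda^{\max}_{\R}\R E_{\up}/\uk$ (oriented by the numbering of the $p_i$, using the hypothesis $l=0$). Because $\at$ has no vanishing coordinate, $\Sigma_{\at}$ is irreducible, so by Remark~\ref{remspin} the isomorphism $\Phi_{\at}$ pulls the oriented real $Spin$ structure back to the fixed one on $(\TC^2,c_{\TC})$, and, when $\R\Sigma_{\at}\neq\emptyset$, the structure $\zeta_{u_0(\R\Sigma)}$ back to the fixed $Spin$ structure on $\TR^2$ --- consistently with the choices made for $\Phi$ --- so using $\Phi_{\at}$ in the comparison is legitimate. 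By Remark~\ref{remlin} the sequence \eqref{linseq}, and likewise the normalisation sequence \eqref{normal}, commute with $\Phi$ and $\Phi_{\at}$, so one may replace the operators by their pull-backs, which are real generalized Cauchy--Riemann operators on $(\TC^2,c_{\TC})$; using the affineness of the space of such operators (\S\ref{rec}) and the homotopy-invariance of the isomorphisms \eqref{isonormal} and \eqref{liniso} asserted in \S\ref{parlin} and Proposition~\ref{lingl}, one may further homotope these to the standard operators $\DB_{\TC^2}$ on $\Sigma$ and $\uDB_{\TC^2,\at}$ on $\Sigma_{\at}$, the homotopies being supported away from the nodes so that the glued operator stays a gluing of the unglued one. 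The $\P$-factors and the marked-point evaluation factors then cancel on the two sides through \eqref{detPz}, \eqref{detPa} and the independence of \eqref{liniso} from $P_0$ and $\uk$ (Proposition~\ref{lingl}), so the remaining comparison is exactly that \eqref{liniso} applied to $\uDB_{\TC^2,\at}$ and $\DB_{\TC^2}$ preserves the natural orientations, i.e. Lemma~\ref{trivgl}. I expect the genuinely analytic input --- that $d_{(0,0)}\gl_{\at}$ is $C^0$-close to $p_{\at}$, i.e. the control of $r_{\at}$ and of the discrepancy between the pregluing of a section and its cut-off --- to be the main point to make precise; it is routine in gluing theory and can be extracted from \cite{pardon} and from the estimates in the proof of Proposition~\ref{lingl}, while the rest is the bookkeeping that, using $\Phi$, $\Phi_{\at}$, naturality and homotopy-invariance, reduces the orientation question to the model computation of Lemma~\ref{trivgl}.
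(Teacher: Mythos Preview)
Your proposal is correct and follows essentially the same route as the paper: compare $d_{(0,0)}\gl_{\at}$ with the linear pregluing, reduce the orientation question to the linear gluing isomorphism \eqref{liniso}, and then transport via $\Phi$, $\Phi_{\at}$ and homotopy invariance to the model case of Lemma~\ref{trivgl}. The only organisational difference is that the paper separates the analytic input into two explicit lemmas: it interpolates the correction term by a homotopy $h_T$ with $h_1=\gl_{\at}$, shows (Lemma~\ref{ltoprove}) that the induced maps $H_T=(\id-Q_TD_T)\circ d_{(0,0)}h_T$ trivialise the family $\ker(D_T)_{T\in[0,1]}$ (thus handling explicitly the passage from the operator at $u_{\at}$ to the one at $\gl_{\at}(0,0)$, which you fold into ``both determinant lines being canonically identified''), and then proves (Lemma~\ref{linegl}) that $\lPHa\circ H_0$ is $C^0$-close to the inclusion $\ker(\vD_{u_0})\hookrightarrow\ker(\uvD_{u_0})$ --- exactly your statement that $\lPKa\circ d_{(0,0)}\gl_{\at}$ converges to the inclusion. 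The final reduction to Lemma~\ref{trivgl} via Remarks~\ref{remlin} and~\ref{remspin} is identical.
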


 Using the notation of \S \ref{pargl}, define the function $h : [0,1]\times \ker(\vD_{u_0})\rightarrow L^{k,2,\epsilon}(\Sigma_{\at},X)\times \P$ by
\[
h_T(\xi,e) = (\ex_{u_{\at}}(\xi_{\at,0}+ T pr_1(\phi_{\at,0,t_0}(\xi,e))) , e+ T pr_2(\phi_{\at,0,t_0}(\xi,e))).
\]
Notice that $h_1 = \gl_{\at}$. Let $\zeta_T = T pr_1(\phi_{\at,0,t_0}(0,0)))$ and $u_T=\ex_{u_{\at}}(\zeta_T)$. Consider the function
\[
\begin{array}{c}
\F_{\at}^T : L^{k,2,\epsilon}(\Sigma_{\at},u_{T}^*TX;\uh)_{+1}\oplus \P \rightarrow L^{k-1,2,\epsilon}(\Sigma_{\at}, \Lambda^{0,1}\Sigma_{\at} \fo{j_0}{\delta(t_0)} u_{T}^*TX)_{+1} \\
(\xi,e)\mapsto \pt^{t_0}_{\ex_{u_{T}} \xi\rightarrow u_{T}}(\DB_{j_0,\delta(t_0)}(\ex_{u_{T}} \xi) - e(.,\ex_{u_{T}}\xi,t_0)),
\end{array}
\]
and denote by $D_{T}$ its derivative at the point $(0,0)$, so that $D_0 = \vD_{\at,0,t_0}$ and $D_1 = \vD_{\gl_{\at}(0,0)}$. The operators $D_T$ are Fredholm and it follows from the inequality (B.8.8) in \cite{pardon} that they are surjective and that they admit uniformly bounded right inverses $Q_T$ so that $H_T = (\id-Q_TD_T)\circ d_{(0,0)} h_T : \ker(\vD_{u_0})\rightarrow \ker(D_T)$ gives a morphism between the trivial bundle $\ker(\vD_{u_0})$ over $[0,1]$ and the bundle $\ker(D_T)_{T\in [0,1]}$. Moreover, $H_1 = d_{(0,0)} \gl_{\at}$.

In fact, let us show as in Lemma 10.6.3 in \cite{MDS} the following Lemma.

\begin{lemma}\label{ltoprove}
 There exists a constant $C>0$ such that for all $\at$ small enough, all $T\in [0,1]$ and all $(\xi,e)\in\ker(\vD_{u_0})$, $H_T$ satisfies 
\begin{equation}\label{toprove}
\lVert H_T(\xi,e)\rVert_{k,2,\epsilon} \geq C \lVert (\xi,e)\rVert_{k,2,\epsilon}.
\end{equation}
\end{lemma}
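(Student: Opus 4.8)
The plan is to prove the uniform lower bound \eqref{toprove} by the standard gluing estimate of Lemma 10.6.3 in \cite{MDS}, adapted to the exponentially weighted Sobolev spaces and to the presence of the perturbation space $\P$. Recall that $H_T = (\id - Q_T D_T)\circ d_{(0,0)} h_T$, so $H_T(\xi,e)$ is the projection of $d_{(0,0)} h_T(\xi,e)$ onto $\ker(D_T)$ along $\im(Q_T)$. First I would estimate $d_{(0,0)} h_T(\xi,e)$ from below: by definition of $h_T$ it is essentially the pregluing $(\xi_{\at,0},e)$ plus a correction coming from the differential at $(0,0)$ of $T\,pr_i(\phi_{\at,0,t_0})$. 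Since $\phi_{\at,0,t_0}$ vanishes at $(0,0)$ together with its value, and since its derivative there is controlled by the gluing construction (Proposition B.9.2 and the surrounding estimates in \cite{pardon}), the correction term is small compared to $\lVert(\xi,e)\rVert$ once $\at$ is small; the pregluing map $(\xi,e)\mapsto(\xi_{\at,0},e)$ is, by Definition B.3.3 in \cite{pardon} and the usual cutoff bounds, uniformly close to an isometry onto its image for $\at$ small. Hence there is $C_1>0$, independent of $\at$ small and of $T\in[0,1]$, with $\lVert d_{(0,0)} h_T(\xi,e)\rVert_{k,2,\epsilon}\geq C_1\lVert(\xi,e)\rVert_{k,2,\epsilon}$.

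Next I would show that applying the projection $\id - Q_T D_T$ does not destroy this bound, i.e.\ that $d_{(0,0)} h_T(\xi,e)$ is already almost in $\ker(D_T)$. The point is to estimate $\lVert D_T\,d_{(0,0)} h_T(\xi,e)\rVert_{k-1,2,\epsilon}$ from above by something of the form $(\text{small in }\at)\cdot\lVert(\xi,e)\rVert_{k,2,\epsilon}$. This is exactly the content of inequality (B.8.8) in \cite{pardon}, which bounds the failure of the preglued solution to solve the linearised equation: since $(\xi,e)\in\ker(\vD_{u_0})$, the section $\xi$ solves the Cauchy--Riemann equation on the broken curve $\Sigma$, so $D_T$ applied to its pregluing is supported on the necks and controlled by the derivatives of the cutoff functions, which are $O(1/R_i)$. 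Together with the uniform bound on $Q_T$ (Proposition B.7.9 in \cite{pardon}, whose hypotheses hold here because $\P$ is $\Cl$-regularizing so $D_{0,0,t_0}=\vD_{u_0}$ is surjective, hence so are the nearby $D_T$ by openness), this gives $\lVert Q_T D_T\,d_{(0,0)} h_T(\xi,e)\rVert_{k,2,\epsilon}\leq \tfrac{C_1}{2}\lVert(\xi,e)\rVert_{k,2,\epsilon}$ for $\at$ small enough, uniformly in $T$. Subtracting yields $\lVert H_T(\xi,e)\rVert_{k,2,\epsilon}\geq \tfrac{C_1}{2}\lVert(\xi,e)\rVert_{k,2,\epsilon}$, which is \eqref{toprove} with $C=C_1/2$.

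The main point requiring care, and the step I expect to be the genuine obstacle, is the uniformity of all constants simultaneously in $\at$ (small) and in $T\in[0,1]$: one must check that the right inverses $Q_T$ of $D_T$ can be chosen with a bound independent of both parameters, and that the differential of the correction term $T\,pr_i(\phi_{\at,0,t_0})$ at $(0,0)$ is genuinely small uniformly in $T$. For the former, $D_T$ interpolates between $\vD_{\at,0,t_0}$ and $\vD_{\gl_{\at}(0,0)}$ along the path $u_T=\ex_{u_{\at}}(\zeta_T)$ with $\lVert\zeta_T\rVert$ small uniformly in $T$ (because $\phi_{\at,0,t_0}(0,0)$ is small), so $D_T$ is a small perturbation of $D_0=\vD_{\at,0,t_0}$, and the uniform bound on $Q_0$ from Proposition B.7.9 in \cite{pardon} propagates to all $Q_T$ by a Neumann-series argument. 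For the latter, one uses that $\phi_{\at,0,t_0}$ is $C^1$ with $C^1$-norm controlled uniformly in $\at$, as recalled after Proposition B.9.2 in \cite{pardon} (and Proposition 24 in \cite{floermonop}). Once these uniformities are in hand, the argument of Lemma 10.6.3 in \cite{MDS} transfers verbatim, and I would simply indicate this rather than reproduce the computation.
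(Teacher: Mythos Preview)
Your approach is essentially the same as the paper's: bound $\lVert D_T\,d_{(0,0)}h_T(\xi,e)\rVert$ from above using (B.8.8) of \cite{pardon} and the smallness of $\zeta_T$ and $d_{(0,0)}\phi_{\at,0,t_0}$, then use the uniform bound on $Q_T$ to conclude that projecting onto $\ker(D_T)$ costs only a small fraction of the norm.

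One step deserves more care. You write that the pregluing map $(\xi,e)\mapsto(\xi_{\at,0},e)$ is ``uniformly close to an isometry onto its image'' by the usual cutoff bounds. This is not true for general sections in the weighted Sobolev space: pregluing cuts off on the necks and can lose an arbitrarily large fraction of the $L^{k,2,\epsilon}$ norm. What is true, and what the paper uses, is that pregluing is bounded below by the $L^2$ norm of $(\xi,e)$ (this is the content of (10.5.15) in \cite{MDS}); then, because $\ker(\vD_{u_0})$ is \emph{finite dimensional}, the $L^2$ norm and the $L^{k,2,\epsilon}$ norm are equivalent there, giving the desired lower bound $\lVert(\xi_{\at,t_0},e)\rVert_{k,2,\epsilon}\geq C_9\lVert(\xi,e)\rVert_{k,2,\epsilon}$. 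So your inequality $\lVert d_{(0,0)}h_T(\xi,e)\rVert\geq C_1\lVert(\xi,e)\rVert$ is correct, but its justification should invoke the finite-dimensionality of the kernel rather than a near-isometry property of pregluing.
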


\begin{proof}
First, we can differentiate $h_T$ to get
\[
d_{(0,0)}h_T(\xi,e) = (E(u_{\at},\zeta_T)(\xi_{\at,t_0} + Tpr_1(d_{(0,0)}\phi_{(\at,0,t_0)}(\xi,e))),e+Tpr_2(d_{(0,0)}\phi_{(\at,0,t_0)}(\xi,e))),
\]
where $E(x,\zeta):T_xX\rightarrow T_{\ex_x(\zeta)}X$ is a smooth family of uniformly invertible linear maps for $\zeta\in T_xX$ small enough, obtained by differentiating the exponential map. Denote by $(\xi,e)_T = (\xi_{\at,t_0} + Tpr_1(d_{(0,0)}\phi_{(\at,0,t_0)}(\xi,e)),e+Tpr_2(d_{(0,0)}\phi_{(\at,0,t_0)}(\xi,e)))$. Then, as remarked by McDuff-Salamon, (10.6.22) in \cite{MDS}, and Pardon (B.8.8) in \cite{pardon}, there exists a constant $C_1>0$ such that 
\[
\lVert D_Td_{(0,0)}h_T(\xi,e) - \pt^{t_0}_{\ex_{u_{T}} \zeta_T\rightarrow u_{T}} D_0(\xi,e)_T\rVert_{k-1,2,\epsilon}\leq C_1 \lVert \zeta_T \rVert_{k,2,\epsilon}\lVert (\xi,e)\rVert_{k,2,\epsilon}.
\]
In particular, there is a constant $C_2>0$ such that we have
\begin{equation}\label{inegun}
\lVert D_Td_{(0,0)}h_T(\xi,e)\rVert_{k-1,2,\epsilon}\leq C_1 \lVert \zeta_T \rVert_{k,2,\epsilon}\lVert (\xi,e)\rVert_{k,2,\epsilon} + C_2\lVert D_0(\xi,e)_T\rVert_{k-1,2,\epsilon}.
\end{equation}
It follows from the construction of $\phi_{(\at,0,t_0)}$ (see Proposition 24 in \cite{floermonop}) and from the estimates in Lemma B.6.1 in \cite{pardon} that there is a constant $C_3>0$ with
\begin{gather}
\lVert \zeta_T \rVert_{k,2,\epsilon}\leq C_3 \sum_{i=1}^m\e^{-(1-\epsilon)R_i}\label{inegzeta} \\ \intertext{and}
\lVert d_{(0,0)}\phi_{(\at,0,t_0)}(\xi,e)\rVert_{k,2,\epsilon} \leq C_3 \sum_{i=1}^m\e^{-(1-\epsilon)R_i} \lVert (\xi,e)\rVert_{k,2,\epsilon}.\label{inegphi}
\end{gather}
Combining the inequalities \eqref{inegzeta} and \eqref{inegphi} with \eqref{inegun} we get a positive constant $C_4$ with
\begin{equation}
  \label{inegdeu}
  \lVert D_Td_{(0,0)}h_T(\xi,e)\rVert_{k-1,2,\epsilon}\leq C_4 \sum_{i=1}^m\e^{-(1-\epsilon)R_i}\lVert (\xi,e)\rVert_{k,2,\epsilon} + C_2\lVert D_0(\xi_{\at,t_0},e)\rVert_{k-1,2,\epsilon}.
\end{equation}
Finally, using the estimates in Lemma B.6.2 in \cite{pardon}, we obtain a positive constant $C_5$ with
\begin{equation}
  \label{inegtroi}
  \lVert D_Td_{(0,0)}h_T(\xi,e)\rVert_{k-1,2,\epsilon}\leq C_4 \sum_{i=1}^m\e^{-(1-\epsilon)R_i}\lVert (\xi,e)\rVert_{k,2,\epsilon}.
\end{equation}
  Thus, since the operators $Q_T$ are uniformly bounded, it follows from \eqref{inegtroi} that there is a positive constant $C_6$ with
  \begin{equation}
    \label{eq:1}
\lVert H_T(\xi,e)\rVert_{k,2,\epsilon} \geq  \lVert d_{(0,0)}h_T(\xi,e)\rVert_{k,2,\epsilon} - C_6 \sum_{i=1}^m\e^{-(1-\epsilon)R_i}\lVert (\xi,e)\rVert_{k,2,\epsilon}.
  \end{equation}
Using inequality \eqref{inegphi} again, we get two positive constants $C_7$ and $C_8$ with
\begin{equation}
  \label{eq:2}
  \lVert H_T(\xi,e)\rVert_{k,2,\epsilon} \geq C_7 \lVert (\xi_{\at,t_0},e)\rVert_{k,2,\epsilon} - C_8 \sum_{i=1}^m\e^{-(1-\epsilon)R_i}\lVert (\xi,e)\rVert_{k,2,\epsilon}.
\end{equation}
Since $\ker(\vD_{u_0})$ is finite dimensional and the norm of $(\xi_{\at,t_0},e)$ is bounded below by the $L^2$ norm of $(\xi,e)$ (see (10.5.15) in \cite{MDS}), we have a positive constant $C_9$ such that
\begin{equation}
  \label{eq:3}
  \lVert (\xi_{\at,t_0},e)\rVert_{k,2,\epsilon}\geq C_9\lVert (\xi,e)\rVert_{k,2,\epsilon}.
\end{equation}
Combining the inequalities \eqref{eq:2} and \eqref{eq:3} proves the inequality \eqref{toprove}.
\end{proof}

Notice that the operator $\vD_{u_0}$ is the restriction of a surjective operator $\uvD_{u_0} : L^{k,p}(\tS,u_0^*TX;\uh)_{+1}\oplus \P\rightarrow L^{k-1,p}(\tS,\Lambda^{0,1}\tS\otimes u_0^*TX)_{+1}$. Applying Proposition \ref{lingl} to the operators $\uvD_{u_0}$ and $\vD_{\at,0,t_0}$, we have the sequence
\begin{equation}
  \label{uorientt}
  0\rightarrow \ker(\vD_{\at,0,t_0})\xrightarrow{\lPHa} \ker(\uvD_{u_0}) \xrightarrow{\fPHa} \R (u_0^*TX)_{\un}\rightarrow 0.
\end{equation}

\begin{lemma}\label{linegl}
There exists a constant $C>0$ such that for $\at$ small enough and for all $v\in\ker(\vD_{u_0})$,
\begin{equation}
  \label{inegl}
  \lVert \lPHa(H_0(v)) - v \rVert \leq C\sum_{i=1}^m\frac{1}{R_i}\lVert v \rVert,
\end{equation}
where the norm on $\ker(\vD_{u_0})$ is fixed arbitrarily.
\end{lemma}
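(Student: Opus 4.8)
The plan is to make everything explicit at $T=0$ and then reduce \eqref{inegl} to two gluing-type estimates together with the defining description of $\lPHa$ as an orthogonal-projection inverse of the cut-off map $p_{\at}$. First I would unwind $H_0$. Since $\zeta_0 = 0$ we have $u_0 = \ex_{u_{\at}}(0) = u_{\at}$, so $h_0(\xi,e) = (\ex_{u_{\at}}(\xi_{\at,t_0}),e)$; the pregluing $\xi \mapsto \xi_{\at,t_0}$ is linear and the differential of $\ex_{u_{\at}}$ at the zero section is the identity, hence $d_{(0,0)}h_0(v)$ equals $(\xi_{\at,t_0},e)$, the pregluing $v_{\at,t_0}$ of $v=(\xi,e)$. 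As $Q_0$ is a right inverse of $D_0 = \vD_{\at,0,t_0}$, the vector $H_0(v) = (\id - Q_0 D_0)(v_{\at,t_0})$ lies in $\ker(\vD_{\at,0,t_0})$, which is the domain of $\lPHa$.

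Then I would bound the two discrepancies between $\lPHa(H_0(v))$ and $v$. The first is that $H_0(v)$ differs from $v_{\at,t_0}$ only by $Q_0 D_0 v_{\at,t_0} = Q_0 D_0 d_{(0,0)}h_0(v)$; applying the inequality \eqref{inegtroi} at $T = 0$ and the uniform bound on $\lVert Q_0\rVert$ gives $\lVert H_0(v) - v_{\at,t_0}\rVert \leq C\sum_{i}\e^{-(1-\epsilon)R_i}\lVert v\rVert$. The second is that $v_{\at,t_0}$ and the cut-off $p_{\at}(v) = (\varphi_{\at}\xi, e)$ are two ways of transplanting the (matching) section $v$ onto $\Sigma_{\at}$: because $v$ is honest across the nodes the two sections agree outside the necks, and on each cylinder $[0,6R_i]\times S^1$ their difference is bounded by the supremum of the gradients of the cut-off functions entering the two constructions, which is $O(1/R_i)$, times $\lVert v\rVert$. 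This yields $\lVert v_{\at,t_0} - p_{\at}(v)\rVert \leq C\sum_i (1/R_i)\lVert v\rVert$, and since $\e^{-(1-\epsilon)R_i}$ is dominated by $1/R_i$ for $R_i$ large, also $\lVert H_0(v) - p_{\at}(v)\rVert \leq C\sum_i(1/R_i)\lVert v\rVert$ for $\at$ small.

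To conclude I would use that, by its construction in the proof of Proposition \ref{lingl}, $\lPHa$ is the composition of the orthogonal projection onto $\im(p_{\at})$ with $p_{\at}^{-1}$. Since $p_{\at}(v) \in \im(p_{\at})$ and the orthogonal projection minimizes the distance to $\im(p_{\at})$, we get $\lVert p_{\at}(\lPHa(H_0(v))) - p_{\at}(v)\rVert \leq \lVert H_0(v) - p_{\at}(v)\rVert$. Feeding this into the coercivity estimate $\lVert w\rVert \leq C\lVert p_{\at}(w)\rVert$, valid for $w \in \ker(\uvD_{u_0})$ and $\at$ small (recalled in the proof of Proposition \ref{lingl}), applied to $w = \lPHa(H_0(v)) - v \in \ker(\uvD_{u_0})$, and using the linearity of $p_{\at}$, produces \eqref{inegl}. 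All the kernels involved are finite dimensional, so the mismatch between the weights $\pm\epsilon$ and the arbitrary norm on $\ker(\vD_{u_0})$ is absorbed into the constants.

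The only non-formal step is the comparison $\lVert v_{\at,t_0} - p_{\at}(v)\rVert \leq C\sum_i(1/R_i)\lVert v\rVert$: the rest is an exercise in the exact sequence \eqref{uorientt} and in estimates already set up for Lemma \ref{ltoprove}. This comparison requires unwinding the pregluing construction of a section (Definition B.3.3 in \cite{pardon}) and checking that it differs from the simple cut-off $p_{\at}$ used in the paper only on the necks, with the difference controlled by the cut-off gradients; it is elementary but convention-heavy, and is the step I expect to take the most care.
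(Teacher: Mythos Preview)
Your overall strategy matches the paper's: both reduce to bounding $\lVert p_{\at}(\lPHa(H_0(v))) - p_{\at}(v)\rVert_{k,2,-\epsilon}$ and then invoke the coercivity $\lVert w\rVert \leq C\lVert p_{\at}(w)\rVert_{k,2,-\epsilon}$ on $\ker(\uvD_{u_0})$. The paper splits that quantity into three pieces $(p_{\at}\lPHa H_0(v)-H_0(v)) + (H_0(v)-v_{\at,t_0}) + (v_{\at,t_0}-p_{\at}(v))$ and bounds the first via the uniform estimate \eqref{ineq}; you instead observe that $p_{\at}\circ\lPHa$ is the orthogonal projection onto $\im(p_{\at})$, so $\lVert p_{\at}\lPHa H_0(v) - p_{\at}(v)\rVert \leq \lVert H_0(v)-p_{\at}(v)\rVert$, and then only two pieces remain. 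That shortcut is legitimate and slightly slicker than the paper's treatment of the first term.

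The genuine gap is in your bound on $\lVert v_{\at,t_0} - p_{\at}(v)\rVert$. Your reasoning (``their difference is bounded by the supremum of the gradients of the cut-off functions, which is $O(1/R_i)$'') is not correct pointwise. On the central portion of each neck, $\varphi_{\at}\xi$ vanishes identically, whereas the pregluing $\xi_{\at,t_0}$ is approximately the nodal value $\xi(n_i)$; so the pointwise difference is of order $\lVert v\rVert$, not $(1/R_i)\lVert v\rVert$, and the gradients of the cut-offs are irrelevant to the size of the difference itself. What actually makes this term small --- and this is exactly what the paper uses --- is that the comparison is taken in the \emph{negatively} weighted norm $\lVert\cdot\rVert_{k,2,-\epsilon}$: the difference is supported on $[R_i/4,23R_i/4]\times S^1$, where the weight $w_{\at}$ is at most $\e^{-\epsilon R_i/4}$, so the weighted norm is in fact exponentially small in $R_i$, far better than the $O(1/R_i)$ you need. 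Once you replace the ``cut-off gradient'' heuristic by this weight argument, your proof goes through; but as written, the key estimate in your second paragraph is asserted for the wrong reason.
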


\begin{proof}
We use the notations we introduced during the proof of Proposition \ref{lingl}. First note that there exists a constant $C_1>0$ such that for all $\at$ small enough and all $w\in\ker(\vD_{u_0})$,
  \begin{equation}
    \label{cutting}
    \lVert w\rVert\leq C_1\rVert p_{\at}(w)\lVert_{k,2,-\epsilon},
  \end{equation}
which follows from the fact that the norm $L^{k,2,-\epsilon}$ of an element of $\ker(\vD_{u_0})$ outside the gluing region controls the norm over the whole curve $\Sigma$. In our case, we obtain
\begin{equation}
  \label{firststep}
  \lVert \lPHa(H_0(v)) - v \rVert \leq C_1\lVert p_{\at}(\lPHa(H_0(v))) - p_{\at}(v) \rVert_{k,2,-\epsilon}.
\end{equation}
We rewrite
\[
p_{\at}(\lPHa(H_0(v))) - p_{\at}(v) = (p_{\at}(\lPHa(H_0(v))) - H_0(v)) + (H_0(v)-v_{\at,t_0}) + (v_{\at,t_0} - p_{\at}(v)),
\]
and we bound each term on the right-hand side.

 Using inequality \eqref{ineq}, we have positive constants $C_2,C_3$ such that
\begin{equation}
  \label{inone}
\begin{split}
  \lVert p_{\at}(\lPHa(H_0(v))) - H_0(v)\rVert_{k,2,-\epsilon}& \leq C_2 \lVert \vD_{\at,0,t_0}(p_{\at}(\lPHa(H_0(v))) - H_0(v))\rVert_{k-1,2,-\epsilon}\\
& \leq C_3 \sum_{i=1}^m\frac{1}{R_i}\lVert v\rVert,
\end{split}
\end{equation}
where the second inequality follows on one hand from the fact that $H_0(v)$ is in the kernel of $\vD_{\at,0,t_0}$ and on the other from the fact that the form $\vD_{\at,0,t_0}(p_{\at}(\lPHa(H_0(v))))$ vanishes outside of the gluing region, and its norm on the gluing region is controlled by that of the cutoff functions $\varphi_{\at}$.

The bound on the second term follows from the inequality \eqref{inegtroi}. The third term is easily bounded, as it is non zero only on the cylinders $[R_i,5R_i]\times S^1$ so that the exponential weight makes its norm exponentially small compared to that of $v$. Putting together the bounds for each of the three terms, we obtain the inequality \eqref{inegl}.
\end{proof}

\begin{proof}[Proof of Proposition \ref{orfiber}]
Lemma \ref{ltoprove} implies that $H_T$ is in fact a trivialization of $\ker(D_T)$. In particular, $d_{(0,0)}\gl_{\at}$ is an isomorphism and to check that it preserves the orientations we fixed, we can in fact check that $H_0$ preserves the orientations, where the one given on $\ker(D_0) = \ker(D_{\at,0,t_0})$ is obtained by following the trivialization $H_T$. More precisely, $\ker(D_0)$ is oriented in the same way as we oriented $\ker(D_1)=\ker(\vD_{\gl_{\at}(0,0)})$, that is using the orientation of $\P$, of $\R(T_{u_{\at}(\up)} X/ T_{u_{\at}(\up)}\uh)$ and an isomorphism between $(u_{\at}^*TX,d c_X)$ and $(\TC^2,c_{\TC})$ preserving the appropriate structures.

Thus, we focus on $H_0$. Fix once and for all a numbering $\un=(n_1,\ldots,n_m)$ of the nodes, as well as a sign for each branch of each node, as we did in \S \ref{parlin}. Using this data, the sequence \eqref{normal} in this case gives
\begin{equation}\label{uorient}
0\rightarrow \ker(\vD_{u_0})\rightarrow \ker(\uvD_{u_0}) \xrightarrow{ev_{\un}} \R (u_0^*TX)_{\un}\rightarrow 0,
\end{equation}
which orients $\ker(\uvD_{u_0})$. On the other hand, applying Proposition \ref{lingl} to the operators $\uvD_{u_0}$ and $D_{\at,0,t_0}$, $\ker(\uvD_{u_0})$ is also oriented by
\begin{equation}
  \label{uorientt}
  0\rightarrow \ker(\vD_{\at,0,t_0})\xrightarrow{\lPHa} \ker(\uvD_{u_0}) \xrightarrow{\fPHa} \R (u_0^*TX)_{\un}\rightarrow 0.
\end{equation}
We claim that $H_0$ preserves the orientations if and only if the orientations on $\ker(\uvD_{u_0})$ coming from the sequences \eqref{uorient} and \eqref{uorientt} coincide. Indeed, take a basis of $\ker(\uvD_{u_0})$ given by the concatenation of a positive basis $\uv$ of $\ker(\vD_{u_0})$ and a basis $\uw$ of the orthogonal of $\ker(\vD_{u_0})$ for some fixed scalar product, such that $ev_{\un}(\uw)$ is a positive basis of $\R (u_0^*TX)_{\un}$. The basis $(\uv,\uw)$ is positive for the orientation induced by the sequence \eqref{uorient}. Now, if $\at$ is small enough, the family $(\lPHa(H_0(\uv)),\uw)$ is a basis of $\ker(\uvD_{u_0})$. Moreover, since $\fPHa$ converges to the map $ev_{\un}$ when $\at$ goes to zero, $\fPHa(\uw)$ is also a positive basis of $\R (u_0^*TX)_{\un}$, so that the basis $(\lPHa(H_0(\uv)),\uw)$ is positive for the orientation coming from the sequence \eqref{uorientt} if and only if $H_0$ preserves the orientations. But Lemma \ref{linegl} implies that the basis $(\lPHa(H_0(\uv)),\uw)$ is always positive for the orientation given by the sequence \eqref{uorient}, which proves the claim.

Thus, we only need to show that the orientations on $\ker(\uvD_{u_0})$ coming from the sequences \eqref{uorient} and \eqref{uorientt} coincide to conclude, i.e. we need to show that the isomorphism $\ddet(D_{\at,0,t_0}) = \ddet(D_{u_0})$ given by the Proposition \ref{lingl} is orientation preserving. To this end, take the isomorphism $\Phi$ between the trivial complex vector bundle of rank $2$ over $\Sigma$ and $u_0^*T X$ we used to orient $\ker(\vD_{u_0})$. As noted in Remark \ref{remspin}, $\Phi$ induces an isomorphism $\Phi_{\at} : (u_{\at}^*TX,d c_X)\rightarrow (\TC^2,c_{\TC})$ which preserves the appropriate structures, so that we can also use $\Phi_{\at}$ to orient $\ker(\vD_{\at,0,t_0})$. Using Remark \ref{remlin}, we obtain a commutative square
\[
\xymatrix{
\ddet(\Phi_{\at}^*D_{\at,0,t_0}) \ar[d]_{\Phi_{\at}}\ar[r] & \ddet(\Phi^*D_{u_0})\ar[d]^{\Phi}\\
\ddet(D_{\at,0,t_0})\ar[r] & \ddet(D_{u_0}),
}
\]
where the horizontal isomorphisms come from Proposition \ref{lingl}. Applying the homotopy invariance for the top isomorphism, we have another square
\[
\xymatrix{
\ddet(\DB_{\TC^2,\at})\ar[d]\ar[r] & \ddet(\DB_{\TC^2})\ar[d]\\
\ddet(\Phi_{\at}^*D_{\at,0,t_0}) \ar[r] & \ddet(\Phi^*D_{u_0}),
}
\]
where the vertical isomorphisms are given by the trivialisations of the determinant bundles over an homotopy joining the pair $(\Phi_{\at}^*D_{\at,0,t_0},\Phi^*D_{u_0})$ to the pair $(\DB_{\TC^2,\at},\DB_{\TC^2})$. The result then follows from Lemma \ref{trivgl}.
\end{proof}

\bibliographystyle{plain}
\bibliography{realcurves}
\end{document}